\newtheorem*{rep@theorem}{\rep@title}
\newcommand{\newreptheorem}[2]{%
\newenvironment{rep#1}[1]{%
 \def\rep@title{#2 \ref{##1}}%
 \begin{rep@theorem}}%
 {\end{rep@theorem}}}
\newtheorem{theorem}{Theorem}[section]
\newtheorem{corollary}[theorem]{Corollary}
\newtheorem{lemma}[theorem]{Lemma}
\newtheorem{prop}[theorem]{Proposition}
\newtheorem{question}[theorem]{Question}
\theoremstyle{definition}
\newtheorem{definition}[theorem]{Definition}
\numberwithin{equation}{section}
\def \D {\mathcal{D}}
\def \K {\mathcal{K}}
\def \B {\mathcal{B}}
\def \F {\mathcal{F}in}
\def \U {\mathcal{U}}
\def \R {\mathbb{R}}
\def \N {\mathbb{N}}
\def \I {\mathcal{I}}
\def \J {\mathcal{J}}
\def \comp {\circ}
\def \sbs {\subseteq}
\def \sps {\supseteq}
\def \cross {\times}
\def \eps {\varepsilon}
\def \im {\mathrm{im}}
\def \closure {\overline}
\def \one {\mathsf{1}}
\def \ulim {\mathcal{U}\text{-}\lim}
\NewDocumentCommand\Lip{mg}{\ensuremath{\mathrm{Lip}_0\left(#1 \IfNoValueTF{#2}{}{,#2}\right)}}
\NewDocumentCommand\lip{mg}{\ensuremath{\mathrm{lip}_0\left(#1 \IfNoValueTF{#2}{}{,#2}\right)}}
\NewDocumentCommand\LF{mg}{\ensuremath{\mathrm{LF}\left(#1 \IfNoValueTF{#2}{}{,#2}\right)}}
\NewDocumentCommand\LFf{mg}{\ensuremath{\mathrm{LF}_\mathrm{fin}\left(#1 \IfNoValueTF{#2}{}{,#2}\right)}}
\NewDocumentCommand\supp{mg}{\ensuremath{\IfNoValueTF{#2}{\mathrm{supp}\left(#1\right)}{\mathrm{supp}_{#1}\left(#2\right)}}}
\newcommand{\Cl}[1]{\mathcal{C}\ell\left(#1\right)}
\renewcommand{\l}[2]{\ell^1_{#1}\left( #2 \right)}
\newcommand{\rad}[2]{\mathrm{rad}_{#1}\left(#2\right)}
\newcommand{\diam}[1]{\mathrm{diam}\left( #1 \right)}
\newcommand{\ulm}[1]{\mathrm{ulm}\left(#1\right)}
\newcommand{\ulmc}[1]{\overline{\mathrm{ulm}}\left(#1\right)}
\begin{document}


\baselineskip=17pt


\title{Lipschitz Free Spaces over Locally Compact Metric Spaces}

\author{Chris Gartland}
\address{Department of Mathematics, Texas A\&M University, College Station, TX 77843, USA}

\date{}

\begin{abstract}
We prove that the Lipschitz free spaces over certain types of discrete metric spaces have the Radon-Nikod\'ym property. We also show that the Lipschitz free space over a complete, locally compact metric space has the Schur or approximation property whenever the Lipschitz free space over each compact subset also has this property.
\end{abstract}

\subjclass[2020]{Primary 46B20; Secondary 46B22}
\keywords{Lipschitz free space, approximation property, Radon-Nikod\'ym property, Schur property}

\maketitle


\tableofcontents

\section{Introduction}
A Banach space $V$ has the \emph{Radon-Nikod\'ym property} (RNP) if every Lipschitz map from $\R \to V$ is differentiable Lebesgue-almost everywhere. We also say that $V$ is an \emph{RNP space}. Important examples of RNP spaces are reflexive spaces and separable duals (\cite[Corollary 2.15]{P}). There is a deep theory of non-biLipschitz embeddability of metric measure spaces into RNP spaces using differentiation methods. These methods were chiefly pioneered by Cheeger-Kleiner, culminating in \cite[Theorem 1.6]{CKRNP}. This theorem implies that nonabelian Carnot groups (\cite[Theorem 6.1]{CKBanach}), inverse limits of certain systems of graphs (\cite[Theorem 10.2]{CKgraphs}), and the Bourdon-Pajot spaces (\cite[Corollary 1.7]{CKRNP}) do not biLipschitz embed into any RNP space.

We make two observations concerning this theory. The first is that differentiation methods are inherently local; they actually rule out \emph{local} biLipschitz embeddability. The second is that, as far as we are aware, differentiation methods (or closely related martingale methods, such as \cite[Theorem 1.3]{O}) are currently the \emph{only} known methods used to prove non-biLipschitz embeddability of metric spaces into RNP spaces. Thus we consider the following question to be a natural one.

\begin{question} \label{q:localembed}
If a complete metric space locally biLipschitz embeds into RNP spaces, must the entire space?
\end{question}

When the metric space is complete and locally compact, this is equivalent to:

\begin{question} \label{q:compactembed}
If every compact subset of a metric space biLipschitz embeds into an RNP space, must the entire space?
\end{question}

An example where the hypothesis is trivially satisfied is when the metric space is discrete. In this case, Question \ref{q:compactembed} takes the form:

\begin{question} \label{q:discreteembed}
Does every complete, discrete metric space biLipschitz embed into an RNP space?
\end{question}

The strongest partial result towards a positive answer to Question \ref{q:discreteembed} is due to Kalton. A metric space $(X,d)$ is \emph{uniformly discrete} if there exists $\theta > 0$ such that $d(x,y) > \theta$ for all $x \neq y \in X$.

\begin{theorem}[\cite{K}, Proposition 4.4] \label{thm:unifdiscrete}
If $X$ is uniformly discrete, then $\LF{X}$ has the RNP.
\end{theorem}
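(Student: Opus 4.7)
My plan is to reduce the general case to the bounded uniformly discrete case via a geometric decomposition of $X$ into dyadic annuli, and to handle the bounded case directly by identifying $\LF{X}$ with $\ell^1$.

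\textbf{Bounded case.} Assume first that $X$ is bounded and uniformly discrete, with $\diam{X} \le D$ and $d(x,y) \ge \theta$ for $x \ne y$. Any $f \in \Lip_0{X}$ satisfies both $\|f\|_\infty \le D\|f\|_L$ (from $f(0)=0$ and the Lipschitz condition) and $\|f\|_L \le (2/\theta)\|f\|_\infty$ (from uniform discreteness), so the pointwise evaluation map $\Lip_0{X} \to \ell^\infty(X \setminus \{0\})$ is a Banach space isomorphism with constants depending only on $D$ and $\theta$. Since this map is weak-$\ast$-to-weak-$\ast$ continuous with respect to the canonical preduals $\LF{X}$ and $\ell^1(X\setminus\{0\})$ (both weak-$\ast$ topologies being pointwise convergence of bounded nets), its inverse-adjoint identifies $\LF{X}$ with $\ell^1(X\setminus\{0\})$. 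The latter is a separable dual, and hence has the RNP.

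\textbf{General case.} For unbounded $X$, I would partition $X$ into dyadic annuli $A_n = \{x \in X : 2^n \le d(x,0) < 2^{n+1}\}$ ($n \in \Z$), and set $Y_n = A_n \cup \{0\}$ as pointed metric spaces. Each $Y_n$ is bounded uniformly discrete, so $\LF{Y_n}$ has the RNP by the bounded case. The key technical step is to prove that $\LF{X}$ is isomorphic to the $\ell^1$-direct sum $\bigoplus_n \LF{Y_n}$, from which the conclusion follows since $\ell^1$-sums of RNP spaces have the RNP. The geometric input is that for $|n-m| \ge 2$ one has $d(A_n, A_m) \gtrsim 2^{\max(n,m)}$, much larger than the internal diameters of the annuli, which should allow bounded linear retractions $\LF{X} \to \LF{Y_n}$ whose norms sum appropriately.

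\textbf{Main obstacle.} The principal difficulty lies in controlling the interaction between adjacent annuli $A_n$ and $A_{n+1}$, which can contain points at distance only $\theta$ apart --- a potential obstruction to $\ell^1$-summability of the natural decomposition. One workaround is to split the annuli into even-indexed and odd-indexed subfamilies (each enjoying much stronger separation) and decompose $\LF{X}$ accordingly into two summands, each a clean $\ell^1$-sum. Alternatively, one could bypass the annular decomposition altogether by embedding $\LF{X}$ into a weighted $\ell^1$-space via the natural quotient map sending $e_{(x,y)}$ to $(\delta_x - \delta_y)/d(x,y)$, and constructing a bounded linear section using the uniform discreteness --- this would directly realize $\LF{X}$ as a subspace of $\ell^1$, from which the RNP is inherited.
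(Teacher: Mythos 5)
Your bounded case is correct and is exactly the heart of Kalton's argument: for bounded uniformly discrete $X$, the identity map gives a Banach-space isomorphism $\Lip{X} \cong \ell^\infty(X \setminus \{0\})$ that is a weak*-homeomorphism for the canonical preduals, so $\LF{X} \cong \ell^1(X \setminus \{0\})$, a separable dual.

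The gap is the reduction to the bounded case. The claim that $\LF{X}$ is isomorphic to $\left(\bigoplus_n \LF{Y_n}\right)_{\ell^1}$ is not established, and the obstacle you flag is genuine, not cosmetic. Neither of your proposed workarounds resolves it. For the even/odd split, note that $\LF{X_e}$ and $\LF{X_o}$ are isometric subspaces of $\LF{X}$ whose algebraic sum is dense, but the coordinate projection $v \mapsto v_e$ is \emph{unbounded}: a point $p \in A_n \cap X_e$ at distance $\theta$ from some $q \in A_{n+1} \cap X_o$ forces any Lipschitz function $g$ with $g|_{X_e} = f|_{X_e}$ and $g|_{X_o} = 0$ to have Lipschitz constant at least roughly $2^n\|f\|_\infty/\theta$. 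So $\LF{X}$ is only a \emph{quotient} of $\LF{X_e} \oplus_1 \LF{X_o}$, and the RNP does not pass to quotients (e.g.\ $L^1$ is a quotient of $\ell^1$). For the second workaround, a bounded linear section of the quotient $\ell^1(\tilde X)/d \twoheadrightarrow \LF{X}$ would realize $\LF{X}$ as a complemented subspace of $\ell^1$, which is a far stronger structural claim than the RNP and is not known for general uniformly discrete spaces; you cannot use it as an intermediate step.

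What is actually needed here is Kalton's Lemma 4.2 together with his Proposition 4.3, which the paper records as Theorem \ref{thm:Kalton}: if $\LF{Y}$ has the RNP for every closed bounded $Y \sbs X$, then $\LF{X}$ does. That reduction is a substantive theorem with its own (rather delicate, telescoping) proof, and the paper invokes it rather than deriving it. Once you cite Theorem \ref{thm:Kalton}, your bounded-case argument finishes the job. Note also that the paper's own route to the $n$-discrete generalization does \emph{not} proceed via an isomorphism but via a \emph{semi-embedding} (Theorem \ref{thm:semiembeddingRNP}), which is further evidence that one should not expect a clean $\ell^1$-sum isomorphism at this level of generality.
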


Before proceeding, let us recall the definition of $\LF{X}$. Let $(X,d)$ be a metric space with distinguished basepoint $0 \in X$. Let $\Lip{X}$ denote the Banach space of Lipschitz functions $f: X \to \R$ satisfying $f(0)= 0$ equipped with the norm $\|f\| := \sup_{p \neq q} \frac{|f(p)-f(q)|}{d(p,q)}$. The space $X$ isometrically embeds into $\Lip{X}^*$ via $p \mapsto \delta_p$, where $\delta_p(f) = f(p)$. The linear span of $\{\delta_p\}_{p \in X}$ in $\Lip{X}^*$ is denoted by $\LFf{X}$, and its closure by $\LF{X}$. The space $\LF{X}$ is called the \emph{Lipschitz free space over $X$}. Lipschitz free spaces are a very well-studied class of Banach spaces. See \cite[Chapter 10]{Obook} and \cite{W} (note that Lipschitz free spaces are called Arens-Eells spaces in that text) for textbook introductions to Lipschitz free spaces. Recent popularity in the research of Lipschitz free spaces (and the use of the name ``Lipschitz free space") is due much in part to the articles \cite{GK} and \cite{K}.

Our first result is a generalization of Kalton's theorem and a step closer toward a positive answer to Question \ref{q:discreteembed}.

\begin{definition}
Let $(X,d)$ be a metric space. For $r > 0$ and $p \in X$, $\boldsymbol{B_r(p)} := \{x \in X: d(x,p) \leq r\}$. We define the class of $\boldsymbol{n}$\textbf{-discrete} metric spaces recursively. A metric space is $0$-discrete if it is finite. A metric space $X$ is $(n+1)$-discrete if there exists $\theta > 0$ such that $B_\theta(p)$ is $n$-discrete for all $p \in X$. 
\end{definition}

Here are some quick facts about $n$-discrete metric spaces.
\begin{itemize}
\item Every $n$-discrete metric space is discrete and complete.
\item Uniformly discrete metric spaces are $1$-discrete.
\item The union of finitely many $n$-discrete metric spaces inside an ambient metric space is again $n$-discrete.
\item Every $n$-discrete metric space is also $(n+1)$-discrete, but not conversely. Here is a way to construct $(n+1)$-discrete metric spaces that are not $n$-discrete: Obviously, there are $1$-discrete metric spaces that are not $0$-discrete (for example, any infinite uniformly discrete space). Now take any metric space $(X,d)$ with $\diam{X} = 1$ that is $n$-discrete but not $(n-1)$-discrete. Let $Y$ be the disjoint union of the spaces $\{(X,\frac{1}{k}d)\}_{k=1}^\infty$, where the distance between any two points in different copies of $X$ is 1. Then $Y$ is $(n+1)$-discrete but not $n$-discrete.
\end{itemize}

\begin{reptheorem}{thm:RNP}
For all $n$-discrete metric spaces $X$, $\LF{X}$ has the RNP.
\end{reptheorem}

An entirely separate motive for proving Theorem \ref{thm:RNP} is the intrinsic interest in the structure of Lipschitz free spaces. Indeed, Question \ref{q:compactembed} has a direct counterpart for Lipschitz free spaces.

\begin{question}
If $X$ is complete and locally compact and $\LF{K}$ has the RNP for every $K \sbs X$ compact, must $\LF{X}$ have the RNP?
\end{question}

Although we are unable to answer this question as stated, we provide a positive answer when the RNP is replaced by the Schur property or the approximation property. Recall that a Banach space has the \emph{Schur property} if every weakly null sequence is norm null, and the \emph{approximation property} (AP) if every bounded operator on the space is the limit of a net of finite rank operators with respect to the topology of uniform convergence on compacta (henceforth, ucc). In fact, we are able to prove something more general.

\begin{definition}
Let $X$ be a metric space. Let $\boldsymbol{\Cl{X}}$ denote the set of closed subsets of $X$. A collection $\I \sbs \Cl{X}$ is called an \textbf{ideal} if
\begin{itemize}
\item $\emptyset \in \I$,
\item $\Cl{X} \ni A \sbs B \in \I$ implies $A \in \I$,
\item $A,B \in \I$ implies $A \cup B \in \I$.
\end{itemize}
When $\I \sbs \Cl{X}$ is an ideal, a closed subset $E \sbs X$ is said to be \textbf{locally modeled on} $\boldsymbol{\I}$, or just \textbf{locally-}$\boldsymbol{\I}$, if for every $p \in E$, there exists a radius $\theta > 0$ (depending on $p$) such that $E \cap B_\theta(p) \in \I$.
\end{definition}

\begin{reptheorem}{thm:Schur}
Let $X$ be a complete metric space and $\I \sbs \Cl{X}$ an ideal such that $X$ is locally-$\I$. If $\LF{A}$ has the Schur property for every $A \in \I$, then $\LF{X}$ has the Schur property.
\end{reptheorem}

\begin{reptheorem}{thm:AP}
Let $X$ be a complete metric space and $\I \sbs \Cl{X}$ an ideal such that $X$ is locally-$\I$. If, for every $A \in \I$, there exists $B \in \Cl{X}$ such that $A \sbs B$ and $\LF{B}$ has the AP, then $\LF{X}$ has the AP.
\end{reptheorem}

The main new tool used in the proofs of these theorems is Theorem \ref{thm:ulmclocallyI}, which involves the notion of the \emph{uniformly locally modeled-closure} (ulm-closure) of ideals. This notion bridges the gap between the quantitative notion of ``uniformly locally-$\I$" (see Definition \ref{def:ulm}) and the qualitative notion of locally-$\I$. The bridge is needed because we make only qualitative assumptions on the metric space, but our dealing with Lipschitz free spaces necessitates quantitative arguments at some point.

Using \cite{LancienSchur}, \cite{Daletcompact}, and \cite{Lanciendoubling}, we obtain immediate corollaries of Theorems \ref{thm:Schur} and \ref{thm:AP}.

\begin{corollary}
The Lipschitz free space over every complete, locally countable, locally compact metric space has the Schur property.
\end{corollary}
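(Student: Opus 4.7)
The plan is to apply Theorem \ref{thm:Schur} to the ideal
\[
\I := \{A \in \Cl{X} : A \text{ is compact and countable}\}.
\]
Verifying that $\I$ is an ideal is routine: the empty set lies in $\I$; a closed subset of a countable compact set is itself countable (as a subset of a countable set) and compact (as a closed subset of a compact set); and the union of two countable compact sets is countable and compact.

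Next I would show that $X$ is locally-$\I$. Fix $p \in X$. By local compactness there is $\theta_1 > 0$ such that $B_{\theta_1}(p)$ is compact, and by local countability there is $\theta_2 > 0$ such that $B_{\theta_2}(p)$ is countable. The closed ball $B_{\min(\theta_1, \theta_2)}(p)$ is then closed (as defined in the excerpt preceding Theorem \ref{thm:RNP}), compact, and countable, hence belongs to $\I$.

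Finally, for every $A \in \I$, Dalet's theorem \cite{Daletcompact} on countable compact metric spaces yields that $\LF{A}$ has the Schur property. Theorem \ref{thm:Schur} then delivers the Schur property for $\LF{X}$.

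The main obstacle, such as it is, is only the packaging: one must bundle both hypotheses (local compactness and local countability) into a single ideal whose members meet the Schur hypothesis of Theorem \ref{thm:Schur}. Once the ideal is chosen so that closed balls of small enough radius land in it, the heavy lifting has already been done by the cited theorem of Dalet and by Theorem \ref{thm:Schur}.
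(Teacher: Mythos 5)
Your argument is essentially identical to the paper's: take the ideal of countable compact closed subsets, verify $X$ is locally-$\I$ via local countability plus local compactness, and apply Theorem \ref{thm:Schur} together with the known Schur result for Lipschitz free spaces over countable compact metric spaces. One small correction: the paper attributes that Schur result to \cite[Theorem 3.1]{LancienSchur}, not to \cite{Daletcompact}, which the paper invokes only for the approximation property.
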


\begin{proof}
This follows from Theorem \ref{thm:Schur}, \cite[Theorem 3.1]{LancienSchur}, and the fact that the countable, compact subsets of a metric space form an ideal. The theorem \cite[Theorem 3.1]{LancienSchur} states that the Lipschitz free space over every countable compact metric space has the Schur property.
\end{proof}

Recall that a metric space $X$ is \emph{doubling} if there exists $N \in \N$ such that for every $r > 0$ and $p \in X$, there exists $\{p_i\}_{i=1}^N \sbs X$ such that $B_r(p) \sbs \cup_{i=1}^N B_{r/2}(p_i)$.

\begin{corollary}
The Lipschitz free space over every complete metric space that is locally countable and locally compact or locally doubling has the AP (in particular, this holds for every connected complete Riemannian manifold).
\end{corollary}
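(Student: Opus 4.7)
The plan is to apply Theorem \ref{thm:AP} with a carefully chosen ideal in each of the two hypotheses, and then to deduce the Riemannian case from the locally doubling one.

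For a complete metric space $X$ that is both locally countable and locally compact, I would take $\I$ to be the collection of all closed subsets of $X$ that are simultaneously countable and compact. Verifying the ideal axioms is immediate, since countability and compactness are preserved by closed subsets and by finite unions. To show $X$ is locally-$\I$, for each $p \in X$ I intersect a compact neighborhood from local compactness with a countable neighborhood from local countability and pick $\theta > 0$ small enough that $B_\theta(p)$ lies in both; being a closed ball, it is already closed in $X$, and being contained in a compact set that is also countable, it belongs to $\I$. By Dalet's theorem \cite{Daletcompact}, $\LF{A}$ has the AP for every countable compact $A$, so taking $B := A$ verifies the hypothesis of Theorem \ref{thm:AP}.

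For the locally doubling case, I would take $\I$ to be the collection of closed doubling subsets of $X$. Closed subsets of a doubling set are doubling with the same constant, and a simple covering argument shows that a union of two closed doubling subsets is doubling with a larger but finite constant: for $p \in A$ and $r > 0$, cover $(A \cup B) \cap B_r(p)$ by splitting it into its $A$-part, which is a genuine ball in $A$ coverable by $N_A$ balls of radius $r/2$, and its $B$-part, which is contained in $B_{2r}(q) \cap B$ for any $q \in B \cap B_r(p)$ and hence coverable by $N_B^2$ balls of radius $r/2$ centered in $B$. Thus $\I$ is an ideal, $X$ is locally-$\I$ by hypothesis, and every $A \in \I$ is a complete doubling metric space (being closed in the complete $X$), whence $\LF{A}$ has the AP by \cite{Lanciendoubling}, so once more $B := A$ works.

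Finally, a connected complete Riemannian manifold is locally doubling: at every point, the exponential map on a sufficiently small geodesic ball is a biLipschitz equivalence with a Euclidean ball, and Euclidean space is doubling, so it falls under the second case. I do not anticipate any substantial obstacle; the work is essentially bookkeeping on top of Theorem \ref{thm:AP}, the cited results of \cite{Daletcompact} and \cite{Lanciendoubling}, and the standard local geometry of Riemannian manifolds. The only mildly nontrivial point is the quantitative covering estimate needed to see that the union of two doubling sets is doubling, and even that is a one-line verification.
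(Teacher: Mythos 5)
Your proof is correct and follows essentially the same route as the paper: apply Theorem \ref{thm:AP} to the ideal of countable compact subsets (resp.\ closed doubling subsets), citing \cite{Daletcompact} (resp.\ \cite{Lanciendoubling}), and note that a connected complete Riemannian manifold is locally doubling via exponential charts. The only minor imprecision is the claim that a closed subset of a doubling set is doubling ``with the same constant''---it is doubling with a possibly larger constant controlled by the original one (pass to a finer cover by balls of radius $r/4$ and recenter)---but this does not affect the verification that doubling subsets form an ideal.
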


\begin{proof}
This follows from Theorem \ref{thm:AP}, \cite[Theorem 3.1]{Daletcompact}, \cite[Corollary 2.2]{Lanciendoubling}, and the fact that countable, compact, and doubling subsets of a metric space form ideals. The theorem \cite[Theorem 3.1]{Daletcompact} implies that the Lipschitz free space over a countable, compact metric space has the AP, and \cite[Corollary 2.2]{Lanciendoubling} implies that the Lipschitz free space over a doubling metric space has the AP.
\end{proof}

We recommend \cite{Petitjean} for additional results on the Schur property of Lipschitz free spaces (also see \cite[Section 4]{APP2}), and \cite{LancienSchur} for the AP. We conclude the introduction by recording additional metric spaces whose Lipschitz free space is known to have the RNP.
\begin{itemize}
\item Spaces that biLipschitz embed into a proper metric space distorted by a nontrivial gauge or into a separable dual Banach space distorted by a nontrivial gauge (\cite[Proposition 6.3]{K}).
\item Proper countable spaces (\cite[Theorem 2.1]{Daletproper}).
\item Spaces that biLipschitz embed into an ultrametric space (\cite[Theorem 2]{Cuth}).
\item Separable, complete metric spaces with null Hausdorff-1 measure that biLipschitz embed into $\R$-trees (\cite[Theorem 1.2]{APP1}).
\end{itemize}

\subsection{Related Work}
A completely independent article containing results generalizing Theorem \ref{thm:Schur} was announced right around the same time as ours. Theorem \ref{thm:Schur} implies that $\LF{X}$ has the Schur property whenever $X$ is complete and locally compact and $\LF{K}$ has the Schur property for every $K \sbs X$ compact, and \cite[Corollary 2.6]{APP2} implies the same conclusion assuming only that $X$ is complete and not locally compact.

\section{Preliminaries, Notation, Conventions} \label{sec:prelims}
We cite \cite{W} for references on Lipschitz free spaces. We'll recall three fundamental facts about Lipschitz free spaces that will be used throughout this article, often without reference. Fix a metric space $(X,d)$ with basepoint $0 \in X$. The first fundamental fact is that $\LF{X}^* = \Lip{X}$ (\cite[Theorem 2.2.2]{W}) and that, on bounded sets, the weak* topology is the topology of pointwise convergence. Let $\Delta \sbs X \cross X$ denote the diagonal and set $\tilde{X} := X \cross X \setminus \Delta$. Then $d$ is nonvanishing on $\tilde{X}$. Let $\ell^1(\tilde{X})/d$ denote the Banach space of functions $b: \tilde{X} \to \R$ equipped with the norm $\|b\| := \sum_{(p,q)}|b_{(p,q)}|d(p,q)$. The second fundamental fact is that there is a linear quotient map $\pi: \ell^1(\tilde{X})/d \to \LF{X}$ defined by $\pi(b) = \sum_{(p,q)}b_{(p,q)}(\delta_p-\delta_q)$. The third fundamental fact is that if $0 \in Y \sbs X$, the natural inclusion $\LF{Y} \hookrightarrow \LF{X}$ is an isometric embedding. This is due to the \emph{McShane extension theorem}: every Lipschitz function from $Y$ to $\R$ can be extended to a Lipschitz function on all of $X$ without increasing the Lipschitz norm (\cite[Theorem 1.5.6(a)]{W}).

For $f \in \Lip{X}$, we define $\boldsymbol{\supp{f}} := \closure{f^{-1}(\R \setminus \{0\})}$. For $v \in \LFf{X}$, we define $\boldsymbol{\supp{v}} := \bigcap \{E \sbs X: v \in \text{span}\{\delta_p\}_{p \in E}\}$. An obvious but important observation is that if $f$ and $g$ agree on $\supp{v}$, then $v(f) = v(g)$.

We also need a fundamental result of Kalton.

\begin{theorem}[\cite{K}, Lemma 4.2 and Proposition 4.3] \label{thm:Kalton}
Let $X$ be a metric space and let $P$ be either the Schur, Radon-Nikod\'ym, or approximation property. If $\LF{Y}$ has $P$ for all $Y \sbs X$ closed and bounded, then $\LF{X}$ has $P$.
\end{theorem}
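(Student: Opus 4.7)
The plan is to realize $\LF{X}$ as a norm-one complemented subspace of an $\ell^1$-sum of Lipschitz free spaces over closed, bounded pieces of $X$. Since each of the three properties (Schur, RNP, AP) is inherited by $\ell^1$-sums and by complemented subspaces, this reduction suffices.

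First I would set, for each $n \in \Z$, the dyadic annulus $Y_n := \{x \in X : 2^{n-1} \leq d(x,0) \leq 2^{n+1}\} \cup \{0\}$, which is closed and bounded, so $\LF{Y_n}$ has property $P$ by hypothesis. Next I would construct a Lipschitz partition of unity $\{\psi_n\}_{n \in \Z}$ on $X \setminus \{0\}$ subordinate to $\{Y_n\}$, satisfying $\|\psi_n\|_{\mathrm{Lip}} \lesssim 2^{-n}$, $\sum_n \psi_n \equiv 1$ on $X \setminus \{0\}$, and with boundedly many $\psi_n$ nonvanishing at any given point. A concrete choice is to take $\psi_n$ piecewise linear in $\log_2 d(\cdot,0)$, peaking on a central sub-annulus of $Y_n$ and extended by $0$ at the basepoint.

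Then I would define $T: \LF{X} \to \bigl(\bigoplus_n \LF{Y_n}\bigr)_{\ell^1}$ on molecules by $T(\delta_p) := (\psi_n(p) \iota_n^{-1}(\delta_p))_n$, interpreting the $n$-th component as $0$ whenever $\psi_n(p) = 0$ (and recalling that $\delta_0 = 0$ in every Lipschitz free space). The companion map is the summation $S : \bigl(\bigoplus_n \LF{Y_n}\bigr)_{\ell^1} \to \LF{X}$, $S\bigl((v_n)_n\bigr) := \sum_n \iota_n(v_n)$, where $\iota_n : \LF{Y_n} \hookrightarrow \LF{X}$ is the canonical isometric inclusion afforded by the third fundamental fact. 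The partition-of-unity identity $\sum_n \psi_n(p) = 1$ for $p \neq 0$ then gives $S \circ T = \mathrm{id}_{\LF{X}}$ on $\LFf{X}$, and by density on all of $\LF{X}$.

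The main obstacle is the boundedness of $T$, which I would establish by duality. A normalizing test element is a sequence $(f_n)_n \in \prod_n \Lip{Y_n}$ with $\sup_n \|f_n\|_{\mathrm{Lip}} \leq 1$. McShane-extending each $f_n$ to $\tilde f_n \in \Lip{X}$ without increasing norm, a direct computation yields $\sum_n \langle T(v)_n, f_n\rangle = v\bigl(\sum_n \psi_n \tilde f_n\bigr)$ for $v \in \LFf{X}$. The Lipschitz norm of the function $g := \sum_n \psi_n \tilde f_n$ on $X$ is then the key quantity. Using a product-rule estimate with $|\tilde f_n(x)| \leq d(x,0) \leq 2^{n+1}$ on $Y_n$ (since $\tilde f_n(0) = 0$) and the dyadic bound $\|\psi_n\|_{\mathrm{Lip}} \lesssim 2^{-n}$, each summand contributes a uniformly bounded Lipschitz constant, while the bounded overlap of the $\mathrm{supp}(\psi_n)$'s limits how many indices $n$ affect any given pair $(x,y)$. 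Taking supremum over admissible sign choices of $(f_n)$ then yields $\|T(v)\|_{\ell^1} \lesssim \|v\|_{\LF{X}}$, and $T$ extends by continuity. With $T$ and $S$ in place, $T(\LF{X})$ is complemented in $\bigl(\bigoplus_n \LF{Y_n}\bigr)_{\ell^1}$, and the preservation properties of $P$ conclude the proof.
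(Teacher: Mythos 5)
Your proof is correct and follows essentially the same route as Kalton's original argument, which the paper cites rather than reproves: the dyadic annuli $Y_n$, a partition of unity $\psi_n$ that is piecewise linear in $\log_2 d(\cdot,0)$ with $\|\psi_n\|_{\mathrm{Lip}} \lesssim 2^{-n}$, and the resulting complemented embedding $\LF{X} \hookrightarrow \bigl(\bigoplus_n \LF{Y_n}\bigr)_{\ell^1}$ with left inverse given by summation. The duality computation for $\|T\|$ — bounding $\|\sum_n \psi_n \tilde f_n\|_{\mathrm{Lip}}$ via the product rule, the radial bound $|\tilde f_n| \leq 2^{n+1}$ on $Y_n$, and the bounded overlap of the $\supp{\psi_n}$ — is exactly Kalton's Lemma 4.2, and the transfer of Schur/RNP/AP across $\ell^1$-sums and complemented subspaces is his Proposition 4.3.
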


The diameter of $X$ is defined by $\boldsymbol{\diam{X}} := \sup_{x,y \in X} d(x,y)$. Whenever $\diam{X} = 1$, we assume it is equipped with basepoint $0 \in X$ satisfying $d(0,p) = 1$ for all $p \in X$. There is no loss in generality in making this assumption as far as any of the theorems in this article are concerned. Note that this implies, for any $f \in \Lip{X}$, $\|f\|_\infty = \sup_{p \neq 0} \frac{|f(p)-f(0)|}{d(0,p)} \leq \|f\|$, and also, for any $p \in X \setminus \{0\}$, $\|\delta_p\| = 1$.

When constructing Lipschitz functions in $\Lip{X}$, we routinely ignore the requirement that $f$ vanishes at the basepoint. This is because the requirement can easily be met by subtracting a constant from the function, and this will never affect any of the other desired properties of our construction.

\section{ulm-Closure of Ideals in $\Cl{X}$} \label{sec:ideals}
Throughout this section, fix a metric space $(X,d)$. The main result of this section is Theorem \ref{thm:ulmclocallyI}, which is used to prove Theorems \ref{thm:Schur} and \ref{thm:AP}. \\

For $r > 0$ and $E \sbs X$, define $\boldsymbol{B_r(E)} = \cup_{p \in E} B_r(p)$.

\begin{definition} \label{def:ulm}
A collection $\D \sbs \Cl{X}$ of closed subsets is \textbf{downward-closed} if $\emptyset \in \D$ and $\Cl{X} \ni A \sbs B \in \D$ implies $A \in \D$. Let $\D$ be a downward-closed collection. A set $E \in \Cl{D}$ is \textbf{uniformly locally modeled on} $\boldsymbol{\D}$, or just \textbf{uniformly locally-}$\boldsymbol{\D}$, if there exists $\theta > 0$ (dependent on $E$, independent of $F$) such that for all finite $F \sbs X$, $E \cap B_\theta(F) \in \D$. The collection of all such $E \in \Cl{X}$ is denoted \textbf{ulm}$\boldsymbol{(\D)}$. The downward-closed condition implies $\D \sbs \ulm{\D}$ and that $\ulm{\D}$ is also downward-closed. Also, whenever $\I$ is an ideal, $\ulm{\I}$ is an ideal. A downward-closed collection $\D'$ is \textbf{ulm-closed} if $\ulm{\D'} = \D'$. The \textbf{ulm-closure} of a downward-closed collection $\D$, denoted $\boldsymbol{\ulmc{\D}}$, is the intersection of all downward-closed, ulm-closed collections containing $\D$. It is easy to verify that $\ulmc{\D}$ is always itself downward-closed and ulm-closed.
\end{definition}

\begin{prop} \label{prop:ulmcideal}
For all ideals $\I \sbs \Cl{X}$, $\ulmc{\I}$ is an ideal.
\end{prop}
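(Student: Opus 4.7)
The plan is to construct $\ulmc{\I}$ explicitly as a transfinite iterate of the $\ulm$ operator and then verify the ideal property is preserved at every stage. Set $\I^{(0)} := \I$, $\I^{(\alpha+1)} := \ulm{\I^{(\alpha)}}$, and $\I^{(\lambda)} := \bigcup_{\alpha < \lambda} \I^{(\alpha)}$ for every limit ordinal $\lambda$. Since $\D \sbs \ulm{\D}$ for any downward-closed $\D$ (as noted in Definition \ref{def:ulm}), this sequence is nondecreasing; and since every stage lies inside the fixed set $\Cl{X}$, the iteration must stabilize at some ordinal $\alpha_0$, so that $\I^{(\alpha_0+1)} = \I^{(\alpha_0)}$. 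In particular $\I^{(\alpha_0)}$ is ulm-closed, and it is plainly downward-closed and contains $\I$.

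The next step is to identify $\I^{(\alpha_0)}$ with $\ulmc{\I}$. The inclusion $\ulmc{\I} \sbs \I^{(\alpha_0)}$ is immediate from the previous paragraph. For the reverse, take any downward-closed, ulm-closed $\D$ with $\I \sbs \D$ and show by transfinite induction that $\I^{(\alpha)} \sbs \D$ for all $\alpha$. The base case and the limit case are immediate; the successor case uses the (easy) monotonicity $\ulm{\D_1} \sbs \ulm{\D_2}$ whenever $\D_1 \sbs \D_2$ are downward-closed, together with the hypothesis $\ulm{\D} = \D$. Hence $\ulmc{\I} = \I^{(\alpha_0)}$.

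It remains to show, again by transfinite induction on $\alpha$, that each $\I^{(\alpha)}$ is an ideal. The base case is the hypothesis. The successor case is precisely the fact, recorded in Definition \ref{def:ulm}, that $\ulm{\J}$ is an ideal whenever $\J$ is; this is the only nontrivial ingredient and all the real work is already packaged there. For the limit case, downward-closure and the presence of $\emptyset$ pass trivially through unions; and if $A, B \in \I^{(\lambda)}$, then because the chain $(\I^{(\alpha)})_{\alpha<\lambda}$ is nondecreasing we may pick a single $\alpha < \lambda$ with $A, B \in \I^{(\alpha)}$, whereupon $A \cup B \in \I^{(\alpha)} \sbs \I^{(\lambda)}$ by the inductive hypothesis. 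Taking $\alpha = \alpha_0$ yields that $\ulmc{\I}$ is an ideal.

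The main obstacle, and it is a minor one, is the transfinite bookkeeping; there is no deeper content beyond what is already asserted in Definition \ref{def:ulm}.
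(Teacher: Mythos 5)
Your proof is correct, and the route is genuinely different from the paper's. You construct $\ulmc{\I}$ explicitly as the stabilizing transfinite iterate $\I^{(\alpha_0)}$ of the operator $\ulm$, then propagate the ideal property through each stage by transfinite induction: the successor step invokes the fact (asserted without proof in Definition \ref{def:ulm}) that $\ulm$ carries ideals to ideals, and the limit step uses directedness of the nondecreasing chain. The paper instead avoids transfinite recursion entirely: it fixes $\ulmc{\I}$ and runs a two-stage bootstrap, first defining $\B := \{B \in \Cl{X} : \forall A \in \I,\ A \cup B \in \ulmc{\I}\}$, showing $\B$ is downward-closed and ulm-closed so that $\ulmc{\I} \sbs \B$, and then repeating with $\B' := \{B : \forall A \in \ulmc{\I},\ A \cup B \in \ulmc{\I}\}$ to conclude. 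Your argument is the more standard ``build the closure from below'' approach and is arguably easier to visualize, at the cost of ordinal bookkeeping and an explicit reliance on the asserted-but-unproved closure of $\ulm$ on ideals; the paper's argument is shorter, needs no construction of $\ulmc{\I}$, and works directly from the intersection characterization. Both ultimately hinge on the same elementary distributivity $(A \cup B) \cap B_\theta(F) = (A \cap B_\theta(F)) \cup (B \cap B_\theta(F))$, packaged differently. One small cosmetic point: you should note (by the same transfinite induction) that each $\I^{(\alpha)}$ is downward-closed, since that hypothesis is needed to invoke $\D \sbs \ulm{\D}$ at every stage, not just at $\alpha_0$.
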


\begin{proof}
Let $\I \sbs \Cl{X}$ be an ideal. The subset $\ulmc{\I}$ is downward-closed by definition, so we only need to show closure under finite unions. Set $\B := \{B \in \Cl{X}: \forall A \in \I, A \cup B \in \ulmc{\I}\}$. Obviously $\I \sbs \B$, and we'll show that $\B$ is ulm-closed. Let $B \in \ulm{\B}$. Let $\theta > 0$ such that $B \cap B_\theta(F) \in \B$ for all $F \sbs X$ finite. Let $A \in \I$. Then for all $F \sbs X$ finite, $(A \cup B) \cap B_\theta(F) = (A \cap B_\theta(F)) \cup (B \cap B_\theta(F)) \in \ulmc{\I}$ by definition of $\B$. Since $\ulmc{\I}$ is ulm-closed, this shows $A \cup B \in \ulmc{\I}$. Since $A \in \I$ was arbitrary, this shows $B \in \B$ and thus $\B = \ulm{\B}$. We can conclude that whenever $A \in \I$ and $B \in \ulmc{\I}$, $A \cup B \in \ulmc{\I}$. Then we can consider the set $\B' := \{B \in \Cl{X}: \forall A \in \ulmc{I}, A \cup B \in \ulmc{\I}\}$, so that the last sentence implies $\I \sbs \B'$. By repeating the same argument, we get that $\B'$ is ulm-closed and thus $\ulmc{\I} \sbs \B'$. By definition of $\B'$ this means $A \cup B \in \ulmc{\I}$ whenever $A,B \in \ulmc{\I}$.
\end{proof}

\begin{theorem} \label{thm:ulmclocallyI}
For every ideal $\I \sbs \Cl{X}$, $\ulmc{\I}$ equals the collection of all locally-$\I$ subsets of $X$.
\end{theorem}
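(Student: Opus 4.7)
The proof splits into two inclusions; writing $\mathcal{L}$ for the collection of locally-$\I$ closed subsets of $X$, the goal is to show $\ulmc{\I} = \mathcal{L}$. For the direction $\ulmc{\I} \sbs \mathcal{L}$, the plan is to verify that $\mathcal{L}$ itself is downward-closed, ulm-closed, and contains $\I$; the defining minimality of $\ulmc{\I}$ then closes this inclusion. Containment $\I \sbs \mathcal{L}$ and downward-closedness are immediate. For ulm-closedness, given $E \in \ulm{\mathcal{L}}$ with witness $\theta > 0$, for each $p \in E$ one applies the ulm condition at $F = \{p\}$ to get $E \cap B_\theta(p) \in \mathcal{L}$, then uses the local-$\I$ property of this smaller set at the point $p$ to produce $\theta' > 0$ with $(E \cap B_\theta(p)) \cap B_{\theta'}(p) \in \I$; downward-closedness of $\I$ then gives $E \cap B_{\min(\theta,\theta')}(p) \in \I$, as required.

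For the reverse inclusion $\mathcal{L} \sbs \ulmc{\I}$, fix $E \in \mathcal{L}$ and, for each $p \in E$, choose $\theta_p > 0$ with $E \cap B_{\theta_p}(p) \in \I$. I would stratify by setting $E_n^\circ := \{p \in E : \theta_p \geq 2^{-n}\}$ and $E_n := \overline{E_n^\circ}$ (closure in $X$); these are nested increasing closed subsets of $E$ with $\bigcup_n E_n = E$. The quantitative workhorse is to show that each $E_n \in \ulm{\I}$ with uniform radius $2^{-n-2}$. Given $p \in X$ with $E_n \cap B_{2^{-n-2}}(p)$ nonempty, pick $q$ in the intersection and approximate by $q' \in E_n^\circ$ within distance $2^{-n-2}$; then $d(p,q') \leq 2^{-n-1}$ and any $r \in E_n \cap B_{2^{-n-2}}(p)$ satisfies $d(r,q') < 2^{-n} \leq \theta_{q'}$, so $E_n \cap B_{2^{-n-2}}(p) \sbs E \cap B_{2^{-n}}(q') \in \I$. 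Downward-closedness and closure of $\I$ under finite unions (over varying $p \in F$) then yield $E_n \in \ulm{\I} \sbs \ulmc{\I}$.

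The final and most delicate step is to promote the countable decomposition $E = \bigcup_n E_n$, with each $E_n \in \ulmc{\I}$, into the conclusion $E \in \ulmc{\I}$. Since $\ulmc{\I}$ is only an ideal by Proposition \ref{prop:ulmcideal}, closed under finite unions but not countable ones, the decomposition alone is insufficient. The plan is to exhibit $E$ as uniformly locally-$\ulmc{\I}$, so that ulm-closedness of $\ulmc{\I}$ forces $E \in \ulmc{\I}$. Concretely, one seeks a single radius $\theta > 0$ (independent of $F$) such that $E \cap B_\theta(F) \in \ulmc{\I}$ for every finite $F \sbs X$; the natural route is to arrange that only finitely many strata meet $B_\theta(F)$ nontrivially, realizing $E \cap B_\theta(F)$ as a finite union of pieces $E_n \cap B_\theta(F) \in \ulmc{\I}$. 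The crux, and the main obstacle I anticipate, is identifying this $\theta$ and justifying the cutoff: it requires quantitative control on how the local radii $\theta_q$ vary as $q$ ranges over $E \cap B_\theta(F)$, which is the substantive quantitative content of the theorem.
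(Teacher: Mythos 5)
Your first inclusion $\ulmc{\I} \sbs \mathcal{L}$ is correct and coincides with the paper's (which simply declares it obvious), and your verification that each stratum $E_n$ lies in $\ulm{\I}$ with uniform radius $2^{-n-2}$ is a sound calculation. The gap you flag in the combining step is, however, fatal to the approach as proposed, not merely a technicality to be filled in. Since the strata are nested increasing, ``only finitely many strata meet $B_\theta(F)$'' amounts to saying $E \cap B_\theta(F) \sbs E_k$ for some finite $k$; because $\ulm{\I}$ is itself an ideal and downward-closed, this would force $E \cap B_\theta(F) \in \ulm{\I}$ for every finite $F$ with a single uniform $\theta$, and hence $E \in \ulm{\ulm{\I}}$. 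That conclusion is too strong. Take $\I = \F$ and, using the paper's own device for producing $(n+1)$-discrete-but-not-$n$-discrete spaces, let $W$ be the disjoint union over all $n$ of a rescaled copy (diameter at most $1/2$) of such a space, with mutual distance $1$ between copies. Then $W$ is complete and discrete, hence locally-$\F$; but since $n$-discreteness passes to subsets and $W$ contains, for each $n$, a non-$n$-discrete subset, $W \notin \F_n$ for any $n$, and in particular $W \notin \F_2 = \ulm{\ulm{\F}}$. For this $W$, no uniform $\theta$ keeps the local radii $\theta_p$ bounded below on $W \cap B_\theta(F)$ as $F$ ranges over finite sets, so the proposed cutoff fails. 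More structurally, the chain $\I \sbs \ulm{\I} \sbs \ulm{\ulm{\I}} \sbs \cdots$ is in general strictly increasing and its union need not be ulm-closed; $\ulmc{\I}$ is a genuinely transfinite closure, and a two-level decomposition of $E$ cannot witness membership in it.

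The paper sidesteps the combinatorics of decomposing $E$ by arguing the contrapositive. Given $E \notin \ulmc{\I}$ and radii $\theta_i \to 0$, ulm-closedness of $\ulmc{\I}$ produces a finite $F$ with $E \cap B_{\theta_1}(F) \notin \ulmc{\I}$, and the ideal property (Proposition \ref{prop:ulmcideal}) then isolates a single $p_1 \in F$ with $E \cap B_{\theta_1}(p_1) \notin \ulmc{\I}$. Iterating yields nested nonempty sets $\bigcap_{i \leq N} E \cap B_{\theta_i}(p_i) \notin \ulmc{\I}$; choosing $p'_N$ in each gives a Cauchy sequence, whose limit $p$ lies in $E$ by completeness, and for every $\theta > 0$ the set $E \cap B_\theta(p)$ contains one of the nested sets, hence lies outside $\ulmc{\I}$ and a fortiori outside $\I$. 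Thus $E$ fails to be locally-$\I$ at $p$. This extraction of a single bad point works at every transfinite stage of the closure precisely because it never tries to exhibit $E$ as a union of simpler pieces.
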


\begin{proof}
Let $\I \sbs \Cl{X}$ be an ideal. Let $\D$ denote the downward-closed collection of all locally-$\I$ closed subsets of $X$. Obviously $\I \sbs \D$ and $\D$ is ulm-closed, so we get $\ulmc{\I} \sbs \D$. To prove the other containment, we prove that $\Cl{X} \setminus \ulmc{\I} \sbs \Cl{X} \setminus \D$. Let $E \in \Cl{X} \setminus \ulmc{\I}$. Choose a sequence of radii $\theta_i \to 0$. Then there is a finite subset $F \sbs X$ such that $E \cap B_{\theta_1}(F) \not\in \ulmc{\I}$. Since $B_{\theta_1}(F) = \cup_{p \in F} B_{\theta_1}(p)$ and $\ulmc{\I}$ is an ideal by Proposition \ref{prop:ulmcideal}, the previous sentence implies $E \cap B_{\theta_1}(p_1) \not\in \ulmc{\I}$ for some $p_1 \in F$. Repeating this argument again yields a point $p_2 \in X$ such that $E \cap B_{\theta_1}(p_1) \cap B_{\theta_2}(p_2) \not\in \ulmc{\I}$. Iterating, we can find a sequence of points $p_i \in X$ such that $\cap_{i=1}^N E \cap B_{\theta_i}(p_i) \not\in \ulmc{\I}$ for all $N \geq 1$. Let $p_i'$ be any sequence of points in $X$ with $p_N' \in \cap_{i=1}^N E \cap B_{\theta_i}(p_i)$. Since $\theta_i \to 0$, $E \in \Cl{X}$, and $X$ is complete, $p'_i \to p$ for some $p \in E$. For any $\theta > 0$, $E \cap B_\theta(p)$ contains $\cap_{i=1}^N E \cap B_{\theta_i}(p_i)$ for some sufficiently large $N$. Since $\ulmc{\I}$ is downward-closed and $\cap_{i=1}^N E \cap B_{\theta_i}(p_i) \not\in \ulmc{\I}$, this implies $E \cap B_\theta(p) \not\in \ulmc{\I}$. In particular, $E \cap B_\theta(p) \not\in \I$. Hence $E \not\in \D$.
\end{proof}

\section{The Schur Property}
The main result of this section is Theorem \ref{thm:Schur}.

\begin{definition} \label{def:T}
Whenever $(X,d)$ is a bounded metric space with basepoint $0 \in X$, $E \sbs X$, and $\theta > 0$, define the bounded operator $T^*_{E,\theta}: \Lip{X} \to \Lip{X}$ by $T^*_{E,\theta}(f)(x) = f(x)\cdot\max\{0,\min\{1,2-2\theta^{-1}d(E,x)\}\}$.
\end{definition}

\begin{prop} \label{prop:T}
Let $X$ be a metric space. Assume $\diam{X} = 1$. For any $E \sbs X$ and $\theta > 0$, $T^*_{E,\theta}$ has a predual $T_{E,\theta}: \LF{X} \to \LF{X}$ and
\begin{itemize}
\item $\|T^*_{E,\theta}\|,\|T_{E,\theta}\|$ are bounded by a function of $\theta$, $C(\theta)$ (independent of $E$).
\item For any $f \in \Lip{X}$ and $v \in \LFf{X}$, $\supp{T^*_{E,\theta}(f)} \sbs \supp{f} \cap B_\theta(E)$ and $\supp{T_{E,\theta}(v)} \sbs \supp{v} \cap B_\theta(E)$.
\item For any $v \in \LFf{X}$, $\supp{v-T_{E,\theta}(v)} \sbs \supp{v} \setminus B_{\theta/2}(E)$.
\end{itemize}
\end{prop}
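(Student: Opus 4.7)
The plan centers on writing down the candidate predual explicitly on the dense subspace $\LFf{X}$ and then reading off all stated properties termwise. Abbreviate $\phi(x) := \max\{0,\min\{1,\, 2-2\theta^{-1}d(E,x)\}\}$, so that $T^*_{E,\theta}(f) = f\cdot \phi$. This cutoff is bounded by $1$, is $(2/\theta)$-Lipschitz (as the composition of a $(2/\theta)$-Lipschitz affine function and the $1$-Lipschitz clamping $t\mapsto\max\{0,\min\{1,t\}\}$), equals $1$ on $\{d(E,\cdot)\leq \theta/2\}$, and equals $0$ on $\{d(E,\cdot)\geq\theta\}$.

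First I would verify that $T^*_{E,\theta}$ sends $\Lip{X}$ into itself with operator norm at most $C(\theta):=1+2/\theta$. The vanishing at the basepoint is automatic from $f(0)=0$, and a one-line product-rule calculation gives
\[
|f(x)\phi(x)-f(y)\phi(y)| \leq |f(x)-f(y)|\,\phi(x) + |f(y)|\,|\phi(x)-\phi(y)| \leq \bigl(\|f\| + \|f\|_\infty \cdot (2/\theta)\bigr) d(x,y),
\]
at which point the standing convention $\diam{X}=1$, which forces $\|f\|_\infty \leq \|f\|$, closes the bound.

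Second I would construct $T_{E,\theta}$ on $\LFf{X}$ by the formula $T_{E,\theta}(\delta_p) := \phi(p)\delta_p$, extended linearly so that $T_{E,\theta}\bigl(\sum_i a_i \delta_{p_i}\bigr) = \sum_i a_i\phi(p_i)\delta_{p_i}$. A direct computation on basis elements shows $\langle T^*_{E,\theta} f, v\rangle = \langle f, T_{E,\theta} v\rangle$ for every $f\in\Lip{X}$ and $v\in\LFf{X}$, which immediately gives the norm bound $\|T_{E,\theta} v\| \leq \|T^*_{E,\theta}\|\,\|v\| \leq C(\theta)\|v\|$. Thus $T_{E,\theta}$ extends by density to a bounded operator on $\LF{X}$, and by continuity the pairing identity persists on all of $\LF{X}$, so $T^*_{E,\theta}$ is genuinely the Banach-space adjoint of $T_{E,\theta}$.

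Third, the three support statements follow by reading off these formulas. If $T^*_{E,\theta}f(x)\neq 0$ then $f(x)\neq 0$ and $\phi(x)\neq 0$, i.e.\ $d(E,x)<\theta$, placing $x$ in $\{f\neq 0\}\cap B_\theta(E)$; taking closures gives the first bullet. For $v=\sum_i a_i\delta_{p_i}\in\LFf{X}$, the terms in $T_{E,\theta}(v)$ with $\phi(p_i)=0$ (that is, with $d(E,p_i)\geq\theta$) vanish outright, while the surviving indices satisfy $p_i\in\supp{v}\cap B_\theta(E)$; and in $v-T_{E,\theta}(v) = \sum_i a_i(1-\phi(p_i))\delta_{p_i}$ the terms with $d(E,p_i)\leq\theta/2$ drop out because $\phi\equiv 1$ there, leaving indices in $\supp{v}\setminus B_{\theta/2}(E)$. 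No deep obstacle appears; the only subtle point is the invocation $\|f\|_\infty \leq \|f\|$, which is legitimate only because of the global convention that $\diam{X}=1$ with basepoint at distance $1$ from every other point.
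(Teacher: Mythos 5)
Your argument is correct and reaches the same constant $C(\theta) = 1 + 2\theta^{-1}$ as the paper, but the route to the predual is genuinely different. The paper's proof is terse: it observes that $T^*_{E,\theta}$ preserves pointwise convergence of bounded Lipschitz functions, hence is weak*-to-weak* continuous on $\Lip{X} = \LF{X}^*$, and therefore is the adjoint of some bounded operator $T_{E,\theta}$ on $\LF{X}$ by the standard characterization of dual operators; this produces the predual abstractly with no formula. You instead write the predual explicitly on the dense subspace $\LFf{X}$ as $\delta_p \mapsto \phi(p)\delta_p$ (where $\phi$ is the cutoff), verify the adjoint pairing on Dirac masses, deduce $\|T_{E,\theta}v\| \le \|T^*_{E,\theta}\|\,\|v\|$ from that identity, and extend by density. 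The payoff of your concreteness is that the support statements for $T_{E,\theta}(v)$ and $v - T_{E,\theta}(v)$ read off termwise from the explicit Dirac-mass formula, whereas the abstract approach would still need the auxiliary computation $(T^*_{E,\theta})^*\delta_p = \phi(p)\delta_p$ to see them. The one step I would tighten in your write-up is ``taking closures gives the first bullet'': to pass from $\{T^*_{E,\theta}f \ne 0\} \sbs \{f \ne 0\} \cap B_\theta(E)$ to $\supp{T^*_{E,\theta}(f)} \sbs \supp{f} \cap B_\theta(E)$ one needs $B_\theta(E)$ to be closed, which holds when $E$ is finite or compact (the only case the paper ever invokes) but can fail for an arbitrary subset $E$; the paper's own ``straightforward to check'' proof glosses over the same point, so this is not a defect peculiar to your argument.
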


\begin{proof}
That $T_{E,\theta}^*$ has a predual is due to the fact that it preserves pointwise convergence of functions. The boundedness of $T_{E,\theta}$ comes from the inequality $\|fg\| \leq \|f\|\|g\|_\infty + \|f\|_\infty\|g\| \leq 2\|f\|\|g\|$. The other statements are straightforward to check and we leave the details to the reader.
\end{proof}

\begin{theorem} \label{thm:Schur}
Let $X$ be a complete metric space and $\I \sbs \Cl{X}$ an ideal such that $X$ is locally-$\I$. If $\LF{A}$ has the Schur property for every $A \in \I$, then $\LF{X}$ has the Schur property.
\end{theorem}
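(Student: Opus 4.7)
The plan is to invoke Theorem~\ref{thm:ulmclocallyI}. Set
$$\D := \{E \in \Cl{X} : \LF{E} \text{ has the Schur property}\}.$$
Since the Schur property passes to closed subspaces and $\LF{Y} \hookrightarrow \LF{X}$ is isometric for closed $Y \sbs X$, $\D$ is downward-closed, and by hypothesis $\I \sbs \D$. If I show $\D$ is ulm-closed, then $\D \supseteq \ulmc{\I}$ equals the collection of locally-$\I$ subsets by Theorem~\ref{thm:ulmclocallyI}, so $X \in \D$. By Theorem~\ref{thm:Kalton} I may work throughout with closed bounded sets and assume $\diam{X}=1$, so that Proposition~\ref{prop:T} applies.

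To verify $\D$ is ulm-closed, let $E \in \ulm{\D}$ be witnessed by $\theta > 0$, meaning $E \cap B_\theta(F) \in \D$ for every finite $F \sbs X$. Suppose for contradiction that $(v_n) \sbs \LF{E}$ is weakly null with $\|v_n\| \geq \delta > 0$; after approximating, I may assume each $v_n \in \LFf{E}$. I then run a gliding-hump extraction. Put $F_0 := \emptyset$, $n_1 := 1$, $F_1 := \supp{v_{n_1}}$. Given $F_k$, note $T_{F_k,\theta}(v_n) \in \LF{E \cap B_\theta(F_k)}$ and is weakly null (bounded operators are weakly continuous), so the Schur property of $E \cap B_\theta(F_k) \in \D$ forces $\|T_{F_k,\theta}(v_n)\| \to 0$; pick $n_{k+1} > n_k$ with $\|T_{F_k,\theta}(v_{n_{k+1}})\| < 2^{-k-1}\delta$ and set $F_{k+1} := F_k \cup \supp{v_{n_{k+1}}}$. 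Define $w_k := v_{n_k} - T_{F_{k-1},\theta}(v_{n_k})$. By Proposition~\ref{prop:T}, $\|w_k\| \geq \delta/2$ and $\supp{w_k} \sbs \supp{v_{n_k}} \setminus B_{\theta/2}(F_{k-1})$; combined with $\supp{w_j} \sbs F_j \sbs F_{k-1}$ for $j<k$, the supports $\{\supp{w_k}\}$ are pairwise $\theta/2$-separated. Also $(w_k)$ is weakly null since $T_{F_{k-1},\theta}(v_{n_k}) \to 0$ in norm.

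The contradiction comes from a standard ``separated supports'' estimate. For each $k$, pick $g_k \in \Lip{X}$ with $\|g_k\|\leq 1$ and $w_k(g_k) \geq \delta/4$, then multiply by an $(8/\theta)$-Lipschitz cutoff equal to $1$ on $\supp{w_k}$ and vanishing off $B_{\theta/8}(\supp{w_k})$; the product $f_k$ has $\Lip{X}$-norm bounded by some $C(\theta)$, support in $B_{\theta/8}(\supp{w_k})$, and agrees with $g_k$ on $\supp{w_k}$. The $\theta/4$-separation of the $B_{\theta/8}(\supp{w_k})$'s implies that for any scalars $(a_k)$, $h := \sum_k \mathrm{sgn}(a_k)\,f_k$ has $\Lip{X}$-norm bounded by a constant $C'(\theta)$, and $w_j(f_k)=0$ whenever $j \neq k$, so
$$\Bigl(\sum_k a_k w_k\Bigr)(h) \;=\; \sum_k |a_k|\, w_k(f_k) \;\geq\; (\delta/4) \sum_k |a_k|.$$
Hence $(w_k)$ is equivalent to the $\ell^1$-basis, its closed linear span in $\LF{E}$ is isomorphic to $\ell^1$, and Schur's theorem for $\ell^1$ applied to the weakly null $(w_k)$ forces $\|w_k\| \to 0$, contradicting $\|w_k\| \geq \delta/2$.

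The main obstacle is arranging the gliding-hump extraction so that the supports $\supp{w_k}$ enjoy a \emph{uniform} separation: this uses essentially that the $\theta$ supplied by $E \in \ulm{\D}$ is a single constant valid for every finite $F$, which is precisely why the quantitative ``uniformly locally-$\D$'' notion of Section~\ref{sec:ideals} is required. Once the uniform separation is in hand, the cutoff construction and $\ell^1$-equivalence are routine.
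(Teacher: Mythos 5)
Your proposal is correct and follows essentially the same route as the paper: reduce to bounded $X$ via Theorem~\ref{thm:Kalton}, define $\D$ as the sets whose free space has the Schur property, prove $\D$ is ulm-closed via a gliding-hump extraction using the operators $T_{F,\theta}$ of Proposition~\ref{prop:T}, and conclude with Theorem~\ref{thm:ulmclocallyI}. The only divergence is the final step: the paper glues norming functions $f_i$ supported on the $\theta/2$-separated sets $\supp{\tilde v_i}$ into a single Lipschitz $f$ with $\|f\|\le 4\theta^{-1}$ and tests the weakly null sequence directly against that fixed $f$ to get $\|\tilde v_i\|=\tilde v_i(f)\to 0$, whereas you take a mild detour by first establishing $\ell^1$-equivalence of $(w_k)$ and then invoking the Schur property of $\ell^1$; your single function $h=\sum_k f_k$ with all coefficients equal to $1$ would already yield the contradiction directly, making the $\ell^1$-equivalence step unnecessary but harmless.
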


\begin{proof}
Assume $\LF{A}$ has the Schur property for all $A \in \I$. By Theorem \ref{thm:Kalton}, we may assume $X$ is bounded. By scaling the metric, we may assume $\diam{X} = 1$. Let $\D$ denote the downward-closed collection of all subsets $E \sbs X$ such that $\LF{E}$ has the Schur property. By assumption, $\I \sbs \D$. We will show $\D$ is ulm-closed, and thus $X \in \D$ by Theorem \ref{thm:ulmclocallyI} and the assumption that $X$ is locally-$\I$.

Suppose $G \in \ulm{\D}$. Let $v_n \in \LF{G}$ such that $v_n$ is weakly null. It suffices to assume $v_n$ belongs to the dense subspace $\LFf{G} \sbs \LF{G}$, and it suffices to find a norm null subsequence. Let $\theta > 0$ such that for all $F \sbs X$ finite, $G \cap B_\theta(F) \in \D$. Set $N_1 := 1$ and $F_1 := \supp{v_1}$. By Proposition \ref{prop:T}, $T_{F_1,\theta}(v_n) \in \LF{G \cap B_\theta(F_1)}$. Thus, by definition of $\D$, $T_{F_1,\theta}(v_n)$ is norm null. Then we can find $N_2 > N_1$ large enough so that $\|T_{F_1,\theta}(v_{N_2})\| \leq 1$. Set $F_2 := F_1 \cup \supp{v_{N_2}}$.  By Proposition \ref{prop:T}, $T_{F_2,\theta}(v_n) \in \LF{G \cap B_\theta(F_2)}$. Thus, by definition of $\D$, $T_{F_2,\theta}(v_n)$ is norm null. Then we can find $N_3 > N_2$ large enough so that $\|T_{F_2,\theta}(v_{N_3})\| \leq 1/2$, and we set $F_3 := F_2 \cup \supp{v_{N_3}}$. Continuing in this way, we can find an increasing sequence of finite subsets $F_1 \sbs F_2 \sbs \dots \sbs X$ and numbers $N_1 < N_2 < \dots$ such that $\supp{v_{N_i}} \sbs F_i$ and $\|T_{F_i,\theta}(v_{N_{i+1}})\| \leq 1/i$ for all $i \geq 1$. Then we form a new sequence $\tilde{v}_i$ defined by $\tilde{v}_1 := v_1$ and $\tilde{v}_{i+1} := v_{N_{i+1}} - T_{F_i,\theta}(v_{N_{i+1}})$ for $i \geq 1$. The sequence $\tilde{v}_i$ is norm-equivalent to a subsequence of $v_n$, so it suffices to prove that this sequence is norm null. By Proposition \ref{prop:T}, $d(\supp{\tilde{v}_{i}},\supp{\tilde{v}_{j}}) \geq \theta/2$ for all $i \neq j$. Let $f_i \in B_{\Lip{\supp{\tilde{v}_i}}}$ such that $\tilde{v}_i(f_i) = \|\tilde{v}_i\|$. Then we define $f(x) := f_i(x)$ if $x \in \supp{\tilde{v}_i}$, and note that the inequality $d(\supp{\tilde{v}_{i}},\supp{\tilde{v}_{j}}) \geq \theta/2$ and the assumption $\diam{X} = 1$ imply $\|f\| \leq \sup_i \{\|f_i\|,\frac{\|f_i\|_\infty+\|f_i\|_\infty}{\theta/2}\} \leq 4\theta^{-1}$. Thus, since $\tilde{v}_i$ is weakly null, we have $\|\tilde{v}_i\| = \tilde{v}_i(f) \to 0$. This proves that $\LF{G}$ has the Schur property. Hence, $G \in \D$ and $\ulm{\D} \sbs \D$.
\end{proof}

\section{The Approximation Property}
The main result of this section is Theorem \ref{thm:AP}.

Recall the definition of $T_{F,\theta}$ from Definition \ref{def:T}. Whenever $X$ is a metric space, the finite subsets $F$ of $X$ form a directed system under the inclusion relation, and thus we get a uniformly bounded net of operators $(T_{F,\theta})_{F \sbs X}$ in $\B(\LF{X},\LF{X})$ for every $\theta > 0$.

\begin{prop} \label{prop:Tconv}
Let $X$ be a metric space. Assume $\diam{X} = 1$. For every $\theta > 0$, $T_{F,\theta}$ ucc-converges to Id$_{\LF{X}}$ as $F \to \infty$, $F$ finite.
\end{prop}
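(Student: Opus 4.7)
The plan is to reduce ucc convergence to two ingredients already in hand: (i) $T_{F,\theta}$ acts as the identity on any $v \in \LFf{X}$ whose support is contained in $F$, and (ii) the family $(T_{F,\theta})_F$ is uniformly bounded by $C(\theta)$ thanks to Proposition \ref{prop:T}. Together with the density of $\LFf{X}$ in $\LF{X}$, a standard $3\eps$-estimate converts eventual pointwise equality on $\LFf{X}$ into uniform convergence on compact subsets.

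For (i), write $\phi_{F,\theta}(x) := \max\{0,\min\{1, 2-2\theta^{-1}d(F,x)\}\}$, so that $T^*_{F,\theta}(f) = \phi_{F,\theta}\cdot f$. Since $d(F,p) = 0$ for $p \in F$, we have $\phi_{F,\theta} \equiv 1$ on $F$, so $T^*_{F,\theta}(f)$ and $f$ agree on $F$. If $v \in \LFf{X}$ satisfies $\supp{v} \sbs F$, the fundamental observation from Section \ref{sec:prelims} gives $v(T^*_{F,\theta}(f)) = v(f)$ for every $f \in \Lip{X}$, hence $T_{F,\theta}(v) = v$.

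For (ii), fix a compact set $K \sbs \LF{X}$ and $\eps > 0$. Set $C := C(\theta) + 1$ with $C(\theta)$ as in Proposition \ref{prop:T}. Since $\LFf{X}$ is dense in $\LF{X}$ and $K$ is totally bounded, choose finitely many $w_1, \ldots, w_n \in \LFf{X}$ with $K \sbs \bigcup_{i=1}^n B(w_i, \eps/(2C))$, and set $F_0 := \bigcup_{i=1}^n \supp{w_i}$, which is finite. For any finite $F \sps F_0$ and any $v \in K$, choose $i$ with $\|v - w_i\| < \eps/(2C)$. Using $T_{F,\theta}(w_i) = w_i$ from step (i), we get
$$\|T_{F,\theta}(v) - v\| \leq \|T_{F,\theta}\|\,\|v - w_i\| + \|T_{F,\theta}(w_i) - w_i\| + \|w_i - v\| \leq 2C\cdot\frac{\eps}{2C} = \eps.$$
Since $\eps$ and $K$ were arbitrary, this is the desired ucc convergence.

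There is no real obstacle here beyond bookkeeping; the essential input is Proposition \ref{prop:T}, which provides both the uniform bound and the cutoff structure making $T_{F,\theta}$ fix elements supported in $F$. Making the finite subsets of $X$ the directed system (so $F$ eventually absorbs any finite support) is exactly what is needed for step (i) to bite.
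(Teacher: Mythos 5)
Your proof is correct and follows essentially the same route as the paper: both reduce to the uniform bound $\|T_{F,\theta}\| \leq C(\theta)$ plus the fact that $T_{F,\theta}(v) = v$ whenever $\supp{v} \subseteq F$ (so that $T_{F,\theta}$ eventually fixes every element of the dense subspace $\LFf{X}$). The only difference is that the paper invokes the standard reduction (uniformly bounded net converging on a dense set converges ucc) in a single sentence, whereas you spell out the $\eps$-bookkeeping; the mathematics is identical.
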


\begin{proof}
Since $\|T_{F,\theta}\|$ is uniformly bounded by a function of $\theta$ (independent of $F$), it suffices to check that $T_{F,\theta}$ pointwise converges to Id$_{\LF{X}}$ as $F \to \infty$ on a dense subset of $\LF{X}$. This happens on the dense subset $\LFf{X} \sbs \LF{X}$ since whenever $v \in \LFf{X}$ and $F \sps \supp{v}$, $T_{F,\theta}(v) = v$.
\end{proof}

\begin{theorem} \label{thm:AP}
Let $X$ be a complete metric space and $\I \sbs \Cl{X}$ an ideal such that $X$ is locally-$\I$. If, for every $A \in \I$, there exists $B \in \Cl{X}$ such that $A \sbs B$ and $\LF{B}$ has the AP, then $\LF{X}$ has the AP.
\end{theorem}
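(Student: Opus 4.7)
The plan is to mirror the proof of Theorem \ref{thm:Schur}, running an ulm-closure argument via Theorem \ref{thm:ulmclocallyI}. After reducing via Theorem \ref{thm:Kalton} and rescaling to $\diam{X} = 1$, I will exhibit a downward-closed, ulm-closed collection $\D \sbs \Cl{X}$ with $\I \sbs \D$; then $X \in \ulmc{\I} \sbs \D$, which unpacks to the AP of $\LF{X}$.

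The key point is to define $\D$ so that both verifications can go through. The naive choice $\D = \{E : \LF{E} \text{ has the AP}\}$ fails already at the first step, because AP does not pass to closed subspaces and so $\LF{A}$ need not have the AP for $A \in \I$. Instead I set
\[
\D := \bigl\{ E \in \Cl{X} : \forall K \sbs \LF{E} \text{ compact}, \forall \eps > 0, \exists \text{ finite rank } R : \LF{X} \to \LF{X} \text{ with } \sup_{v \in K}\|Rv - v\| < \eps \bigr\},
\]
so that $\D$-membership is a property of $E$ relative to the ambient $\LF{X}$. Downward-closedness is immediate, and $X \in \D$ is by definition the AP of $\LF{X}$.

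For $\I \sbs \D$, fix $A \in \I$ and let $B \sps A$ with $\LF{B}$ having the AP. For compact $K \sbs \LF{A} \sbs \LF{B}$ and $\eps > 0$, the AP of $\LF{B}$ yields a finite rank $R = \sum_k u_k^* \otimes w_k : \LF{B} \to \LF{B}$ with $\|Rv - v\|_{\LF{B}} < \eps$ on $K$, where $u_k^* \in \Lip{B}$ and $w_k \in \LF{B}$. Extending each $u_k^*$ via McShane to $\tilde u_k^* \in \Lip{X}$ and defining $\tilde R := \sum_k \tilde u_k^* \otimes w_k : \LF{X} \to \LF{X}$ produces a finite rank operator whose restriction to $\LF{B}$ equals $R$; the isometric inclusion $\LF{B} \hookrightarrow \LF{X}$ then gives $\|\tilde R v - v\|_{\LF{X}} < \eps$ on $K$, so $A \in \D$. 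For $\ulm{\D} \sbs \D$, suppose $G \in \ulm{\D}$ with parameter $\theta > 0$, let $K \sbs \LF{G}$ be compact, and let $\eps > 0$. Proposition \ref{prop:Tconv} furnishes a finite $F \sbs X$ with $\|T_{F,\theta}v - v\| < \eps/2$ on $K$. Since $T_{F,\theta}(K)$ is compact and sits in $\LF{G \cap B_\theta(F)}$, while $G \cap B_\theta(F) \in \D$ by hypothesis on $G$, I obtain a finite rank $R : \LF{X} \to \LF{X}$ with $\|Rw - w\| < \eps/2$ on $T_{F,\theta}(K)$. The composition $R \circ T_{F,\theta}$ is a finite rank operator on $\LF{X}$ and satisfies $\|R \circ T_{F,\theta}(v) - v\| < \eps$ for all $v \in K$, so $G \in \D$.

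The main conceptual obstacle is the non-heredity of the AP to closed subspaces; the resolution is to phrase the approximation condition inside the ambient $\LF{X}$ rather than inside $\LF{E}$. This turns the $\I \sbs \D$ step into a routine McShane-style extension of finite rank operators and leaves the $\ulm{\D} \sbs \D$ step structurally identical to the composition argument driving the Schur proof. The conclusion that $\LF{X}$ has the AP then follows by specializing $X \in \D$ to $E = X$.
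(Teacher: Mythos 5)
Your proof is correct and follows the same ulm-closure strategy as the paper: reduce via Theorem \ref{thm:Kalton}, rescale to $\diam{X}=1$, exhibit a downward-closed, ulm-closed $\D \sps \I$, and then invoke Theorem \ref{thm:ulmclocallyI} together with Proposition \ref{prop:Tconv}. The only deviation is in the exact choice of $\D$: the paper takes $\D$ to consist of those $E$ for which every $T \in \B\left(\LF{X},\LF{X}\right)$ with $\im(T) \sbs \LF{E}$ lies in the ucc-closure of the finite rank operators (so that $\ulm{\D}\sbs\D$ is a one-line composition with $T_{F,\theta}$), whereas you take $\D$ to consist of those $E$ for which the inclusion $\LF{E}\hookrightarrow\LF{X}$ is ucc-approximable by finite rank operators on $\LF{X}$, handling $\I\sbs\D$ by a McShane extension of the dual-side functionals; both encode the same relative approximation condition and either definition works.
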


\begin{proof}
Assume that, for every $A \in \I$, there exists $B \in \Cl{X}$ such that $A \sbs B$ and $\LF{B}$ has the AP. By Theorem \ref{thm:Kalton}, we may assume $X$ is bounded. By scaling the metric, we may assume $\diam{X} = 1$. Let $\D$ denote the downward-closed collection of all sets $E \sbs X$ such that whenever $T \in \B(\LF{X},\LF{X})$ and $\im(T) \sbs \LF{E}$, then $T$ belongs to the ucc-closure of the finite rank operators on $\LF{X}$. By assumption, $\I \sbs \D$. We will show $\D$ is ulm-closed, and thus $X \in \D$ by Theorem \ref{thm:ulmclocallyI} and the assumption that $X$ is locally-$\I$.

Suppose $G \in \ulm{\D}$. Let $T \in \B(\LF{X},\LF{X})$ such that $\im(T) \sbs \LF{G}$. Let $\theta > 0$ such that for all $F \sbs X$ finite, $G \cap B_\theta(F) \in \D$. By Proposition \ref{prop:T}, for each $F \sbs X$ finite, $\im(T_{F,\theta} \comp T) \sbs \LF{G \cap B_\theta(F)}$. By definition of $\D$, this implies each $T_{F,\theta} \comp T$ belongs to the ucc-closure of the finite rank operators on $\LF{X}$. By Proposition \ref{prop:Tconv}, the net of operators $T_{F,\theta} \comp T$ ucc-converges to $T$ as $F \to \infty$. This shows $T$ belongs to the ucc-closure of the finite rank operators on $\LF{X}$, and thus $G \in \D$ and $\ulm{\D} \sbs \D$.
\end{proof}

\section{The Radon-Nikod\'ym Property} \label{sec:RNP}
The goal of this section is to prove Theorem \ref{thm:RNP}, which occurs in the next subsection. The proofs of the supporting lemmas follow. First, we define the key players.

\begin{definition}
Let $K \in [1,\infty)$. A bounded linear map $T: V \to W$ between Banach spaces is a $\boldsymbol{K}$\textbf{-semi-embedding} if it is injective and $\closure{T(B_V)} \sbs T(KB_V)$, where $B_V$ denotes the closed unit ball of $V$.
\end{definition}

The key fact we need about $K$-semi-embeddings is that they preserve the RNP for separable spaces.

\begin{theorem}[\cite{P}, Proposition 2.42] \label{thm:semiembeddingRNP}
A separable Banach space that $K$-semi-embeds into an RNP space has the RNP.
\end{theorem}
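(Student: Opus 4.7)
The plan is to use the martingale characterization of the RNP: a Banach space $V$ has the RNP if and only if every uniformly bounded $V$-valued martingale on a filtered probability space converges almost surely. Given such a martingale $(M_n)$ adapted to a filtration $(\Sigma_n)$ with $\|M_n\|_V \leq C$ almost surely, I would push it through $T$ to obtain a uniformly bounded $W$-valued martingale $(TM_n)$, bounded by $\|T\|C$. Since the RNP passes to closed subspaces and $W$ has the RNP, the separable closed subspace $\closure{T(V)}$ of $W$ has the RNP, so $TM_n \to N$ almost surely and in $L^1$ for some $N \in L^\infty(\Omega,\closure{T(V)})$ satisfying $TM_n = E(N \mid \Sigma_n)$.

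The next step is to lift $N$ back to a $V$-valued function via the $K$-semi-embedding property. For almost every $\omega$, the value $N(\omega)$ is a limit of elements of $T(CB_V)$, so $N(\omega) \in \closure{T(CB_V)} \sbs T(CKB_V)$ by scaling the defining inclusion of a $K$-semi-embedding. Since $T$ is injective, there is a unique $M(\omega) \in CKB_V$ with $TM(\omega) = N(\omega)$, yielding a well-defined bounded function $M : \Omega \to V$.

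To finish, I would verify that $M$ is strongly measurable and then identify it with the almost sure limit of $M_n$. Measurability would follow from the Kuratowski--Lusin theorem: a continuous injection between Polish spaces has a Borel measurable partial inverse on its image. Applied to $T : V \to \closure{T(V)}$, which is an injection between Polish spaces by separability of $V$, this gives a Borel measurable map $T^{-1} : T(V) \to V$; hence $M = T^{-1} \comp N$ is Borel measurable, strongly measurable by Pettis' theorem, and thus in $L^\infty(\Omega, V)$. Commuting $T$ with conditional expectations gives $T E(M \mid \Sigma_n) = E(TM \mid \Sigma_n) = E(N \mid \Sigma_n) = TM_n$, and injectivity yields $M_n = E(M \mid \Sigma_n)$. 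The standard vector-valued martingale convergence theorem (which for martingales of the form $E(M \mid \Sigma_n)$ with $M \in L^1(\Omega, V)$ does not itself require the RNP) then delivers $M_n \to M$ almost surely.

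The main obstacle I expect is this measurability step: $M$ is pointwise well-defined by injectivity of $T$, but extracting a strongly measurable representative requires that the inverse of $T$ on its image be Borel, which is precisely where the separability hypothesis on $V$ is essential via the Kuratowski--Lusin theorem. It is worth noting that one cannot instead try to argue directly that $M_n(\omega)$ is Cauchy in $V$ from the fact that $TM_n(\omega)$ is Cauchy in $W$: a compact injective operator (e.g., $Tx = (x_n/n)$ on $\ell_2$) is a $1$-semi-embedding for which the sequence $e_n$ is bounded and $Te_n$ is Cauchy while $e_n$ is not, so convergence really must be obtained at the level of limits via the lifting step and then transferred back pathwise through the conditional expectation identity.
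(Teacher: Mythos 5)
Your proof is correct, and it follows the standard martingale route that the paper delegates to \cite{P} (the paper supplies no independent proof, only the remark that Pisier's $K=1$ argument adapts to general $K$). The three load-bearing steps --- pushing the bounded martingale through $T$ to converge a.s.\ in the separable RNP space $\closure{T(V)}$, lifting the limit pointwise via the scaled inclusion $\closure{T(CB_V)} \sbs T(CKB_V)$, and invoking Lusin--Souslin to make the lift Borel --- are exactly where separability of $V$ enters, and your appeal to the closable-martingale convergence theorem at the end (which requires no RNP hypothesis on $V$) correctly closes the loop; note also that your assertion $M_n \to M$ a.s.\ (rather than merely to $E(M \mid \Sigma_\infty)$) is justified because $N$, and hence $M = T^{-1}\comp N$, is already $\Sigma_\infty$-measurable.
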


The theorem is actually stated and proved only for $K=1$ in \cite{P}, but easily adapts to general $K$. For the remainder of the article, \textbf{fix a metric space} $\boldsymbol{(X,d)}$ with $\boldsymbol{\diam{X} = 1}$ and \textbf{basepoint} $\boldsymbol{0 \in X}$ with $\boldsymbol{d(0,p) = 1}$ for all $\boldsymbol{p \in X}$.

\begin{definition}
Let $\I \sbs \Cl{X}$ be an ideal. Let $\boldsymbol{\Lip{\I,X}}$ denote the (not necessarily closed) subspace of $\Lip{X}$ consisting of functions $f$ with $\supp{f} \in \I$. We stress that the functions in $\Lip{\I,X}$ are required to be globally defined on $X$ and not just on some set in $\I$. The subset $\Lip{\I,X}$ is a subspace since $\I$ is an ideal. For each $p \in X$, we get a continuous linear functional $\delta_p \in \Lip{\I,X}^*$ defined by $\delta_p(f) = f(p)$. For each subset $Y \sbs X$, define $\boldsymbol{\LFf{\I}{Y}}$ to be the linear span of $\{\delta_p\}_{p \in Y}$ in $\Lip{\I,X}^*$, and $\boldsymbol{\LF{\I}{Y}}$ its closure. We also stress that, despite the absence of `$X$' in the notation, the structure of the space $\LF{\I}{Y} \sbs \Lip{\I,X}^*$ depends not only on $Y$ but also $X$ because the functions in $\Lip{\I,X}$ are required to be globally defined on $X$.

If $v \in \LFf{\I}{X}$, we denote by $\supp{v}$ the smallest finite set $F$ such that $v \in$ span$\{\delta_p\}_{p \in F}$. We denote the norm on $\LF{\I}{X}$ by $\|\cdot\|_\I$. It of course holds that $\Lip{\Cl{X}}{X} = \Lip{X}$, $\LFf{\Cl{X}}{X} = \LFf{X}$, and $\LF{\Cl{X}}{X} = \LF{X}$. Whenever $\I_1 \sps \I_2$, we get a canonical restriction map $\boldsymbol{R^{\I_1}_{\I_2}}: \LFf{\I_1}{X} \to \LFf{\I_2}{X}$ which extends to a linear contraction $R^{\I_1}_{\I_2}: \LF{\I_1}{X} \to \LF{\I_2}{X}$. If $\I_1 = \Cl{X}$, we just write $R_{\I_2}$.
\end{definition}

\subsection{Proof of Theorem \ref{thm:RNP}}
\begin{definition}
Define ideals $\boldsymbol{\F_n} \sbs \Cl{X}$ recursively, as follows:
\begin{itemize}
\item $\F_0 := \boldsymbol{\F} :=$ finite subsets of $X$.
\item $\F_{n+1} := \ulm{\F_n}$.
\end{itemize}
\end{definition}

It holds that $Y \in \F_n$ if and only if $Y$ is $n$-discrete.

\begin{lemma} \label{lem:RNP}
Assume $X$ is countable, $\diam{X} = 1$, and $X$ is $n$-discrete for some $n \in \N$. The space $\LF{\F_k}{X}$ has the RNP for every $k \in \N$.
\end{lemma}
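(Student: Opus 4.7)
The plan is to induct on $k$, constructing at each step a $K$-semi-embedding into a Banach space with the RNP and invoking Theorem~\ref{thm:semiembeddingRNP}.

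For the base case $k = 0$, I would target $\ell^1(X \setminus \{0\})$. Since $X$ is countable and discrete (being $n$-discrete), each $p$ has an isolating radius $r_p := \inf_{q \neq p} d(p,q) > 0$, and the indicator $\one_{\{p\}}$ lies in $\Lip{\F_0,X}$ with Lipschitz norm $1/r_p$. I would define a linear map $\phi: \LF{\F_0}{X} \to \ell^1(X \setminus \{0\})$ by $\phi(\delta_p) = r_p e_p$, verify boundedness with constant $2$ by pairing $v = \sum a_p \delta_p$ against the test function $\sum_p \mathrm{sgn}(a_p) r_p \one_{\{p\}}$ (whose Lip norm is at most $2$ via the elementary inequality $r_p + r_q \leq 2 d(p,q)$), and check injectivity using the same bumps. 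The semi-embedding condition $\overline{\phi(B)} \sbs \phi(K B)$ would follow from the separability of $\Lip{\F_0, X}$ (inherited from countability of $X$): $\ell^1$-convergent sequences lift to weak$^*$-convergent ones in $\Lip{\F_0, X}^*$, and a norm-closure argument places the limit back in $\LF{\F_0}{X}$. Since $\ell^1$ has the RNP, this closes the base case.

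For the inductive step $k \geq 1$, assume $\LF{\F_{k-1}}{X}$ has the RNP. The inclusion $\Lip{\F_{k-1}, X} \hookrightarrow \Lip{\F_k, X}$ dualizes to a contractive restriction $R := R^{\F_k}_{\F_{k-1}}: \LF{\F_k}{X} \to \LF{\F_{k-1}}{X}$, which I would show is a $K$-semi-embedding. Injectivity uses that each $\one_{\{p\}}$ lies in $\Lip{\F_{k-1}, X}$ (since $\{p\} \in \F_0 \sbs \F_{k-1}$) and distinguishes the atoms $\delta_p$. For the semi-embedding condition, the operators $T_{E, \theta}^*$ from Definition~\ref{def:T} and Proposition~\ref{prop:T} map $\Lip{\F_k, X}$ into $\Lip{\F_{k-1}, X}$ when $E$ witnesses the $\ulm$-closure relation $\F_k = \ulm{\F_{k-1}}$; applying these to a sequence of preimages and passing to a diagonal subsequence should produce a bounded-norm lift of any target limit in $\LF{\F_{k-1}}{X}$.

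The main obstacle is the semi-embedding condition in both steps. In the base case, it reduces to showing that weak$^*$-limits of norm-bounded atoms remain in the norm-closure $\LF{\F_0}{X}$, not merely in the ambient $\Lip{\F_0, X}^*$. In the inductive step, the $T_{E,\theta}^*$-approximations of $f \in \Lip{\F_k, X}$ converge only pointwise (not in Lip norm), so coordinating them with norm convergence in $\LF{\F_{k-1}}{X}$ to extract a bounded lift will require a delicate patching argument that exploits the hierarchical $\ulm$-closure machinery developed in Section~\ref{sec:ideals}.
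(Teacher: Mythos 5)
Your high-level strategy matches the paper: induct on $k$, and at each step produce a $K$-semi-embedding into an RNP space, invoking Theorem~\ref{thm:semiembeddingRNP}. Your inductive step even uses the same map $R^{\F_k}_{\F_{k-1}}$ as the paper. But there are two real gaps, one in each half.

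In the inductive step, the entire content is the claim that $R^{\F_k}_{\F_{k-1}}$ is a $K$-semi-embedding. In the paper this is Theorem~\ref{thm:semiembedding} (a $7$-semi-embedding), and it is the heart of Section~\ref{sec:RNP}, requiring Theorem~\ref{thm:quotient}, Lemmas~\ref{lem:isomorphism}, \ref{lem:quotientclose}, \ref{lem:quotientseparated}, \ref{lem:quotientfinite}, \ref{lem:supp}, \ref{lem:sequence1}, \ref{lem:sequence2}, plus an ultrafilter/Fatou argument. You correctly observe that the $T^*_{E,\theta}$ operators interact well with $\ulm{\F_{k-1}}=\F_k$, but that observation alone does not yield the lift: pointwise convergence of $T^*_{F,\theta}f$ to $f$ cannot be upgraded to Lipschitz-norm control without the finiteness and separation decompositions from Lemmas~\ref{lem:quotientfinite}--\ref{lem:sequence2}, and the explicit patching argument you defer is precisely where the bound $\|v\|_\J \leq 6c^{-3}$ is forced to close the semi-embedding. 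So you have identified the right target statement but not proved it.

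In the base case, you take a genuinely different route. The paper simply observes, via Corollary~\ref{cor:predual}, that $\LF{\F}{X}=\Lip{\F}{X}^*$ is a separable dual, hence RNP. You instead try to semi-embed into $\ell^1$ by $\phi(\delta_p)=r_p e_p$ with $r_p:=\inf_{q\neq p}d(p,q)$. Your boundedness argument (pairing against $\sum_p\mathrm{sgn}(a_p)r_p\one_{\{p\}}$, Lipschitz norm $\leq 2$ via $r_p+r_q\leq 2d(p,q)$) and your injectivity argument (the $\one_{\{p\}}$ linearly span $\Lip{\F}{X}$ on any finite support, so $v(\one_{\{p\}})\equiv 0$ forces $v=0$) are fine. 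The gap is the semi-embedding condition. Given $v_n\in B_{\LF{\F}{X}}$ with $\phi(v_n)\to w$ in $\ell^1$, a weak$^*$-subnet limit $v^*$ of $v_n$ lives a priori only in $\Lip{\F}{X}^*$, not in $\LF{\F}{X}$. Your ``norm-closure argument'' to place it back in $\LF{\F}{X}$ is exactly the assertion $\LF{\F}{X}=\Lip{\F}{X}^*$ --- i.e.\ Corollary~\ref{cor:predual} --- which you would have to prove anyway. And the direct attempt, estimating $\|v^*-\sum_{p\in F}v^*(\one_{\{p\}})\delta_p\|_\F\leq\sum_{p\notin F}\rad{\F}{p}|v^*(\one_{\{p\}})|$, cannot be closed from the hypothesis $\sum_p r_p|v^*(\one_{\{p\}})|<\infty$, because for $n\geq 2$-discrete spaces $\rad{\F}{p}$ can be arbitrarily larger than the isolating radius $r_p$. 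Once you do establish the predual result, the $\ell^1$ semi-embedding is redundant: a separable dual already has the RNP.
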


\begin{proof}
We will prove the theorem by induction. The base case is $k=0$, $\LF{\F_0}{X} = \LF{\F}{X}$. Corollary \ref{cor:predual} implies $\LF{\F}{X}$ has the RNP since it is a separable dual. Now assume $\LF{\F_k}{X}$ has the RNP for some $k \in \N$. Then since $\F_{k+1} = \ulm{\F_k}$, Theorem \ref{thm:semiembedding} and the inductive hypothesis imply $R^{\F_{k+1}}_{\F_k}: \LF{\F_{k+1}}{X} \to \LF{\F_k}{X}$ is a 7-semi-embedding from a separable space into an RNP space, hence $\LF{\F_{k+1}}{X}$ has the RNP by Theorem \ref{thm:semiembeddingRNP}.
\end{proof}

\begin{theorem} \label{thm:RNP}
For all $n$-discrete metric spaces $X$, $\LF{X}$ has the RNP.
\end{theorem}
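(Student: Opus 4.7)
The plan is to reduce the theorem to the countable case established in Lemma \ref{lem:RNP} via two successive descents. First, by Theorem \ref{thm:Kalton}, it suffices to prove that $\LF{Y}$ has the RNP for every closed bounded $Y \sbs X$. A routine induction on $n$ shows that $n$-discreteness is inherited by subsets, so each such $Y$ is itself $n$-discrete, and after replacing $X$ by $Y$ I may assume $X$ is bounded. Rescaling the metric (which preserves $n$-discreteness with the witness rescaled accordingly) and, if necessary, adjoining a basepoint at distance $1$ from every point, I may further assume $\diam{X} = 1$ and $d(0,p) = 1$ for all $p \in X$. Adjoining the basepoint preserves $n$-discreteness because for $n \geq 1$ any witness $\theta < 1$ for $X$ also works for $X \cup \{0\}$: the ball $B_\theta(0)$ in the enlarged space is the singleton $\{0\}$.

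For the second descent, from $X$ to a countable subspace, I would use that under the paper's definition of the RNP the property is trivially separably determined: any Lipschitz $f: \R \to \LF{X}$ has separable image. Fix such an $f$, pick a countable dense subset $\{v_k\} \sbs f(\R)$, and approximate each $v_k$ by a sequence of finite linear combinations of $\delta_p$'s from $\LFf{X}$. Let $Y \sbs X$ be the countable set consisting of the basepoint $0$ together with every point appearing in any of these approximations. The isometric inclusion $\LF{Y} \hookrightarrow \LF{X}$ from the Preliminaries contains $f(\R)$, so $f$ factors as a Lipschitz map into $\LF{Y}$. The space $Y$ is countable, $n$-discrete as a subset of $X$ (hence complete), and satisfies $\diam{Y} = 1$ since it contains the basepoint.

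It then remains to apply Lemma \ref{lem:RNP} to $Y$ with $k = n$, yielding that $\LF{\F_n}{Y}$ has the RNP. To conclude, I identify $\LF{\F_n}{Y}$ with $\LF{Y}$: since $Y$ is $n$-discrete we have $Y \in \F_n$, and by downward closedness of the ideal $\F_n$ every closed subset of $Y$ lies in $\F_n$; in particular every $g \in \Lip{Y}$ has $\supp{g} \in \F_n$, giving $\Lip{\F_n, Y} = \Lip{Y}$ and therefore $\LF{\F_n}{Y} = \LF{Y}$. Hence $\LF{Y}$ has the RNP, $f$ is Lebesgue-a.e.\ differentiable, and $\LF{X}$ has the RNP. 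Since the heavy semi-embedding construction is entirely inside Lemma \ref{lem:RNP}, the only obstacle left here is the routine bookkeeping that each reduction (Kalton, rescaling, basepoint adjoining, passing to a countable subset) preserves $n$-discreteness and the basepoint normalization, together with the endgame identification $\LF{\F_n}{Y} = \LF{Y}$ when $Y \in \F_n$.
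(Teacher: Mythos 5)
Your proof is correct and follows essentially the same route as the paper: reduce to bounded subsets via Theorem \ref{thm:Kalton} and heredity of $n$-discreteness, normalize to $\diam{X}=1$, pass to a countable subset by separable determination of the RNP, and then invoke Lemma \ref{lem:RNP} together with the identification $\LF{\F_n}{Y} = \LF{Y}$ when $Y$ is $n$-discrete. The only cosmetic difference is that you verify the bookkeeping (heredity of $n$-discreteness, preservation under rescaling and basepoint-adjoining, and the direct argument that the paper's Lipschitz-differentiability formulation of RNP is separably determined) in detail, whereas the paper states these reductions more tersely and cites \cite[Corollary 2.12]{P} for separable determination.
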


\begin{proof}
Let $X$ be an $n$-discrete metric space. Theorem \ref{thm:Kalton} and the fact that $n$-discreteness passes to arbitrary subsets allows us to assume $X$ is bounded. By scaling, we may assume $\diam{X} = 1$. The RNP is separably determined (\cite[Corollary 2.12]{P}). Then since every separable subspace of $\LF{X}$ is contained in $\LF{Y}$ for some $Y \sbs X$ countable, we may assume $X$ is countable. Then the conclusion follows from Lemma \ref{lem:RNP} and the fact that $\LF{X} = \LF{\Cl{X}}{X} = \LF{\F_n}{X}$.
\end{proof}

The important inputs into these proofs are Corollary \ref{cor:predual} and Theorem \ref{thm:semiembedding}. Corollary \ref{cor:predual} is proved in Section \ref{ss:predual} and follows quickly from standard results in \cite{W}; we will not say any more about the proof. Theorem \ref{thm:semiembedding} states that $R^{\F_{n+1}}_{\F_n}$ is a 7-semi-embedding. Injectivity is straightforward and is proved in Theorem \ref{thm:injective}. The difficult part of the proof is showing that $\closure{R^{\F_{n+1}}_{\F_n}(B_{\LF{\F_{n+1}}{X}})} \sbs R^{\F_{n+1}}_{\F_n}(7B_{\LF{\F_{n+1},X}})$. \\

\subsection{Special Case of Theorem \ref{thm:semiembedding}} ~\\
\indent To gain intuition for why $\closure{R^{\F_{n+1}}_{\F_n}(B_{\LF{\F_{n+1}}{X}})} \sbs R^{\F_{n+1}}_{\F_n}(7B_{\LF{\F_{n+1},X}})$ could be true, we will prove a very special case in the following proposition:

\begin{prop} \label{prop:semiembeddingspecial}
For every ideal $\I \sbs \Cl{X}$ and $p,q_k \in X$, if \\ $R^{\ulm{\I}}_{\I}(\delta_p-\delta_{q_k}) \overset{k \to \infty}{\to} \delta_p$, then $\|\delta_p\|_{\ulm{\I}} \leq \liminf_{k \to \infty} \|\delta_p-\delta_{q_k}\|_{\ulm{\I}}$.
\end{prop}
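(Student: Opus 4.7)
The plan is to deduce from the hypothesis that $f(q_k) \to 0$ for every $f$ in the unit ball of $\Lip{\ulm{\I},X}$, and then use an almost-extremizer $f$ for $\|\delta_p\|_{\ulm{\I}}$ as a test functional for $\|\delta_p-\delta_{q_k}\|_{\ulm{\I}}$. The hypothesis $R^{\ulm{\I}}_\I(\delta_p-\delta_{q_k}) \to \delta_p$ in $\LF{\I}{X}$ rewrites as $\|\delta_{q_k}\|_\I \to 0$, which, by definition of the operator norm on $\LF{\I}{X}$, says precisely that $h(q_k) \to 0$ uniformly over $h \in \Lip{\I,X}$ with $\|h\| \leq 1$; equivalently, $h_k(q_k) \to 0$ for any sequence $(h_k)$ in $\Lip{\I,X}$ with $\sup_k\|h_k\| < \infty$.

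To bridge from $\Lip{\I,X}$ to $\Lip{\ulm{\I},X}$, fix $\eps > 0$, choose $f \in \Lip{\ulm{\I},X}$ with $\|f\|\leq 1$ and $f(p) \geq \|\delta_p\|_{\ulm{\I}}-\eps$, and pick $\theta \in (0,1)$ witnessing $\supp{f} \in \ulm{\I}$, i.e., such that $\supp{f} \cap B_\theta(F) \in \I$ for every finite $F \sbs X$. Multiply $f$ by a standard $\theta$-scale Lipschitz cutoff $\tau_k$ at $q_k$, namely $\tau_k(x) := \max\{0,\min\{1,\, 2-2d(x,q_k)/\theta\}\}$, so $\tau_k(q_k)=1$, $\tau_k\equiv 0$ off $B_\theta(q_k)$, and $\|\tau_k\| \leq 2/\theta$. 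Setting $h_k := f\tau_k$, one checks that $h_k$ vanishes at the basepoint (since $f$ does), $\supp{h_k} \sbs \supp{f} \cap B_\theta(q_k) \in \I$ (applying the ulm property to $F=\{q_k\}$), and by the Leibniz estimate used in the proof of Proposition \ref{prop:T}, $\|h_k\| \leq \|f\|\|\tau_k\|_\infty + \|f\|_\infty\|\tau_k\| \leq 1+2/\theta$. Thus $(h_k)$ is a uniformly bounded sequence in $\Lip{\I,X}$, so the first paragraph gives $h_k(q_k) \to 0$; but $h_k(q_k)=f(q_k)\tau_k(q_k)=f(q_k)$, so $f(q_k) \to 0$.

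Since $f$ itself is a valid test function for the $\ulm{\I}$-norm, $\|\delta_p-\delta_{q_k}\|_{\ulm{\I}} \geq |f(p)-f(q_k)|$, whence $\liminf_k \|\delta_p-\delta_{q_k}\|_{\ulm{\I}} \geq f(p) \geq \|\delta_p\|_{\ulm{\I}}-\eps$; sending $\eps \to 0$ closes the argument. The only real obstacle is that the good test functional for $\|\delta_p\|_{\ulm{\I}}$ sits in the larger space $\Lip{\ulm{\I},X}$, while the hypothesis controls only the smaller $\Lip{\I,X}$; cutoff multiplication is precisely the device that moves a function from the larger space into the smaller one, at a controlled $O(1/\theta)$ cost in Lipschitz norm that is harmless because $\|\delta_{q_k}\|_\I\to 0$ absorbs any fixed multiplicative constant.
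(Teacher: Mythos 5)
Your proof is correct, but it takes a genuinely different route than the paper's. The paper leans on the quantitative machinery it has just built: Proposition \ref{prop:radInorm} identifies $\|\delta_p\|_\I = \rad{\I}{p}$ and $\|\delta_p - \delta_{q}\|_\I = \min\{d(p,q), \rad{\I}{p} + \rad{\I}{q}\}$, so the hypothesis becomes $\rad{\I}{q_k} \to 0$; Proposition \ref{prop:radIinf} then shows $\rad{\I}{\cdot}$ is bounded away from $0$ on any ball $B_r(p)$ with $r < \rad{\ulm{\I}}{p}$, forcing $\liminf_k d(p,q_k) \geq \rad{\ulm{\I}}{p}$, after which the $\min$ formula closes the argument. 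You bypass the radius formalism entirely and work with test functions directly: take a near-extremal $f$ for $\|\delta_p\|_{\ulm{\I}}$, multiply by a $\theta$-scale cutoff at $q_k$ (with $\theta$ a ulm-witness for $\supp{f}$) to drop it into $\Lip{\I}{X}$ at a uniformly bounded cost $1+2/\theta$, and then quote $\|\delta_{q_k}\|_\I \to 0$ to conclude $f(q_k) \to 0$. This is really the same cutoff mechanism the paper packages as $T^*_{E,\theta}$ (Definition \ref{def:T}, Proposition \ref{prop:T}) and deploys in the injectivity proof of Theorem \ref{thm:injective}, so your argument is thematically closer to that proof than to the paper's proof of this proposition. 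The trade-off: the paper's route pays for the radius formulas once and then gets this result (and Lemmas \ref{lem:quotientclose}, \ref{lem:quotientseparated}) essentially for free, making the constants and the geometry of the semi-embedding visible; your route is shorter and self-contained for this one statement but doesn't generalize as cleanly toward the decomposition results that the full 7-semi-embedding proof needs. Two small points worth flagging in your write-up: the reduction to $\theta < 1$ should be noted (harmless, since the ulm witness is monotone under shrinking $\theta$), and the bound $\|f\|_\infty \leq \|f\| \leq 1$ that you use implicitly in the Leibniz estimate relies on the standing convention $\diam{X}=1$ with basepoint at distance $1$ from every other point, which is in force in Section \ref{sec:RNP}.
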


Before proving Proposition \ref{prop:semiembeddingspecial}, we introduce an important quantity and prove two propositions that clarify its role in the study of $\LF{\I}{X}$.

\begin{definition}
Let $\I \sbs \Cl{X}$ be an ideal. For $p \in X$, we define $\boldsymbol{\rad{\I}{p}} := \sup \{r \leq \diam{X}: B_r(p) \in \I\}$, with the convention that $\sup \emptyset = 0$.
\end{definition}

\begin{prop} \label{prop:radInorm}
For all ideals $\I \sbs \Cl{X}$ and $p,p' \in X$, $\|\delta_p\|_\I = \rad{\I}{p}$ and $\|\delta_p-\delta_{p'}\|_\I = \min\{d(p,p'),\rad{\I}{p}+\rad{\I}{p'}\}$.
\end{prop}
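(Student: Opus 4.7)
Plan for Proposition \ref{prop:radInorm}. I would prove the two equalities separately, with the first feeding into half of the second.

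For $\|\delta_p\|_\I = \rad{\I}{p}$. The plan is a straightforward duality argument. For the upper bound, fix $f \in \Lip{\I, X}$ with $\|f\| \leq 1$. For every $x$ with $d(x, p) < |f(p)|$, the Lipschitz estimate gives $|f(x) - f(p)| \leq d(x, p) < |f(p)|$ and hence $f(x) \neq 0$, so the open ball of radius $|f(p)|$ around $p$ sits inside $f^{-1}(\R \setminus \{0\}) \sbs \supp{f} \in \I$. Since $\I$ is downward-closed, $B_s(p) \in \I$ for every $s < |f(p)|$, giving $\rad{\I}{p} \geq |f(p)|$; the supremum over $f$ yields $\|\delta_p\|_\I \leq \rad{\I}{p}$. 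For the lower bound, for each $r < \rad{\I}{p}$, the tent $f_r(x) := \max\{0, r - d(x, p)\}$ is $1$-Lipschitz with $\supp{f_r} \sbs B_r(p) \in \I$, and $f_r(0) = 0$ since $r < 1 = d(0, p)$. Hence $f_r \in \Lip{\I, X}$ with $\|f_r\| \leq 1$, so $\|\delta_p\|_\I \geq f_r(p) = r$.

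For $\|\delta_p - \delta_{p'}\|_\I = \min\{d(p, p'), \rad{\I}{p} + \rad{\I}{p'}\}$. The upper bound is immediate: the triangle inequality combined with the first formula gives $\|\delta_p - \delta_{p'}\|_\I \leq \rad{\I}{p} + \rad{\I}{p'}$, while the isometric inclusion $\Lip{\I, X} \sbs \Lip{X}$ gives
\[ \|\delta_p - \delta_{p'}\|_\I = \sup_{\substack{f \in \Lip{\I, X} \\ \|f\| \leq 1}} |f(p) - f(p')| \leq \sup_{\substack{f \in \Lip{X} \\ \|f\| \leq 1}} |f(p) - f(p')| = d(p, p'). \]
For the lower bound, my plan is a signed pair of tents: for $r < \rad{\I}{p}$ and $r' < \rad{\I}{p'}$ with $r + r' \leq d(p, p')$, define $f(x) := \max\{0, r - d(x, p)\} - \max\{0, r' - d(x, p')\}$. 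Then $\supp{f} \sbs B_r(p) \cup B_{r'}(p') \in \I$, $f(0) = 0$ (since $d(0, p) = d(0, p') = 1$ exceeds $r, r'$), and the constraint $r + r' \leq d(p, p')$ forces $p \notin B_{r'}(p')$ when $r > 0$ and $p' \notin B_r(p)$ when $r' > 0$, giving $f(p) - f(p') = r + r'$. One then pushes $r + r'$ to the binding upper limit: if $\rad{\I}{p} + \rad{\I}{p'} \leq d(p, p')$, send $r \to \rad{\I}{p}$ and $r' \to \rad{\I}{p'}$; if $d(p, p') < \rad{\I}{p} + \rad{\I}{p'}$, arrange $r + r' \to d(p, p')^-$, which is possible precisely because of this inequality.

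The chief obstacle---and the only delicate point---is checking that $f$ is $1$-Lipschitz. The argument is by cases on the positions of two points $x, y$ relative to the two balls. The critical mixed case $x \in B_r(p)$, $y \in B_{r'}(p')$ reduces to
\[ f(x) - f(y) = (r + r') - d(x, p) - d(y, p') \leq d(p, p') - d(x, p) - d(y, p') \leq d(x, y), \]
using $r + r' \leq d(p, p')$ and the triangle inequality $d(p, p') \leq d(p, x) + d(x, y) + d(y, p')$. The remaining cases (both points in the same ball, or one point outside both balls) are either the $1$-Lipschitzness of a single tent or follow from each tent vanishing on the boundary of its ball. Degenerate subcases ($p = p'$, or one of $\rad{\I}{p}, \rad{\I}{p'}$ equal to $0$) are handled by allowing $r = 0$ or $r' = 0$, which collapses the construction to a single tent or to the zero function.
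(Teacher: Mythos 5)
Your proof is correct and follows essentially the same strategy as the paper's: duality to test against unit-ball Lipschitz functions with support in $\I$ for the upper bounds, and explicit tent functions for the lower bounds. The only cosmetic differences are that you argue the first upper bound via the contrapositive observation that a small ball around $p$ lies in $\supp{f}$ (rather than the paper's argument producing a nearby point outside $\supp{f}$), and you organize the second lower bound through a single parameterized family of signed tent pairs subject to $r+r' \leq d(p,p')$, whereas the paper writes out the three cases with explicit radii.
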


\begin{proof}
Let $\I,p,p'$ be as above. Let $\eps > 0$ and $f \in B_{\Lip{\I}{X}}$. Then $\supp{f} \in \I$, so by definition of $\rad{\I}{p}$, there exists $q \in B_{\rad{\I}{p}+\eps}(p)$ such that $q \notin \supp{f}$. Hence, $d(p,q) \leq \rad{\I}{p}+\eps$ and $f(q) = 0$. Since $\|f\| \leq 1$, it follows that $|\delta_p(f)| = |f(p)| \leq d(p,q) \leq \rad{\I}{p}+\eps$. Since $\eps > 0$ and $f \in B_{\Lip{\I}{X}}$ were arbitrary, it follows that $\|\delta_p\|_\I \leq \rad{\I}{p}$. For the other inequality, it obviously suffices to assume $\rad{\I}{p} > 0$. Let $\eps \in (0,\rad{\I}{p})$. Then define $f: X \to \R$ by $f(x) = \max(0,\rad{\I}{p}-\eps-d(p,x))$. Then $\|f\| \leq 1$ and $\supp{f} \sbs B_{\rad{\I}{p}-\eps}(p) \in \I$. Thus, $\|\delta_p\|_\I \geq |\delta_p(f)| = f(p) = \rad{\I}{p}-\eps$. Since $\eps \in (0,\rad{\I}{p})$ was arbitrary, we get $\|\delta_p\|_\I \geq \rad{\I}{p}$. This proves the first statement.

For the second statement, we clearly have \\ $\|\delta_p-\delta_{p'}\|_\I \leq \min\{d(p,p'),\rad{\I}{p}+\rad{\I}{p'}\}$. For the other inequality, there are three cases:
\begin{enumerate}
\item $d(p,p') \leq \max\{\rad{\I}{p},\rad{\I}{p'}\}$,
\item $\max\{\rad{\I}{p},\rad{\I}{p'}\} \leq d(p,p') \leq \rad{\I}{p}+\rad{\I}{p'}$,
\item $d(p,p') \geq \rad{\I}{p}+\rad{\I}{p'}$.
\end{enumerate}
In the first case, without loss of generality, assume $d(p,p') \leq \rad{\I}{p}$ and use the function $f(x) = \max(0,\rad{\I}{p}-\eps-d(p,x))$. In the second case, define $f(x) = \max(0,\rad{\I}{p}-\eps-d(p,x))$ and $g(x) = \max(0,d(p,p')-\rad{\I}{p}-\eps-d(p',x))$ and use the function $f-g$. In the third case, $f(x) = \max(0,\rad{\I}{p}-\eps-d(p,x))$ and $g(x) = \max(0,\rad{\I}{p'}-\eps-d(p',x))$ and use the function $f-g$.
\end{proof}

\begin{prop} \label{prop:radIinf}
For all ideals $\I \sbs \Cl{X}$, $p \in X$, and \\ $r \in (0,\rad{\ulm{\I}}{p})$, $\inf_{q \in B_r(p)} \rad{\I}{q} > 0$.
\end{prop}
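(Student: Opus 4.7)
The plan is to exploit the gap between $r$ and $\rad{\ulm{\I}}{p}$ to find a larger ball about $p$ which lies in $\ulm{\I}$, then transfer the uniform constant $\theta$ coming from the ulm-property of that larger ball down to every point of $B_r(p)$.

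More precisely, I would proceed as follows. Since $r < \rad{\ulm{\I}}{p}$, the definition of $\rad{\ulm{\I}}{p}$ as a supremum produces some $r' \in (r,\rad{\ulm{\I}}{p}]$ with $B_{r'}(p) \in \ulm{\I}$ (the value $r'$ need not attain the sup; any strictly larger radius with this property works). By Definition \ref{def:ulm} applied to $B_{r'}(p) \in \ulm{\I}$, there exists $\theta > 0$ such that $B_{r'}(p) \cap B_\theta(F) \in \I$ for every finite $F \sbs X$; in particular this holds for singletons $F = \{q\}$.

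Now set $\theta' := \min\{\theta,\, r'-r\} > 0$, which is a positive constant depending only on $p,r,\I$ (and not on $q$). For any $q \in B_r(p)$, the triangle inequality gives $B_{\theta'}(q) \sbs B_{r'}(p)$, while trivially $B_{\theta'}(q) \sbs B_\theta(q)$, so
\[
B_{\theta'}(q) \sbs B_{r'}(p) \cap B_\theta(\{q\}) \in \I.
\]
Downward closure of $\I$ then yields $B_{\theta'}(q) \in \I$, so $\rad{\I}{q} \geq \theta'$. Taking the infimum over $q \in B_r(p)$ gives $\inf_{q \in B_r(p)} \rad{\I}{q} \geq \theta' > 0$.

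There is no real obstacle here: the only subtlety is the choice of the auxiliary radius $r'$, which must be chosen strictly greater than $r$ so that $\theta' := \min\{\theta, r'-r\}$ remains positive and so that $B_{\theta'}(q) \sbs B_{r'}(p)$ for every $q \in B_r(p)$. The strict inequality $r < \rad{\ulm{\I}}{p}$ in the hypothesis is exactly what makes such an $r'$ available.
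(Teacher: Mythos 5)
Your proof is correct and essentially identical to the paper's: both choose an intermediate radius $r'$ strictly between $r$ and $\rad{\ulm{\I}}{p}$ so that $B_{r'}(p) \in \ulm{\I}$, invoke the ulm-property of $B_{r'}(p)$ to obtain a uniform $\theta$, and then take $\min\{\theta, r'-r\}$ as the uniform lower bound on $\rad{\I}{q}$ over $q \in B_r(p)$. The only cosmetic difference is that you phrase the existence of $r'$ via "the supremum produces some $r'$"; the paper just chooses $r' \in (r,\rad{\ulm{\I}}{p})$ and notes that downward-closedness of $\ulm{\I}$ forces $B_{r'}(p) \in \ulm{\I}$.
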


\begin{proof}
Let $\I,p,r$ be as above. Choose $r' \in (r,\rad{\ulm{\I}}{p})$. By definition of $\rad{\ulm{\I}}{p}$, $B_{r'}(p) \in \ulm{\I}$. By definition of $\ulm{\I}$, there exists $\theta' > 0$ such that $B_{r'}(p) \cap B_{\theta'}(q) \in \I$ for every $q \in X$. Set $\theta := \min(\theta',r'-r)$. Then $B_\theta(q) \sbs B_{r'}(p) \cap B_{\theta'}(q) \in \I$ for every $q \in B_r(p)$. Hence, by definition of $\rad{\I}{q}$, $\inf_{q \in B_r(p)} \rad{\I}{q} \geq \theta > 0$.
\end{proof}

\begin{proof}[Proof of Proposition \ref{prop:semiembeddingspecial}]
Let $\I \sbs \Cl{X}$ be an ideal, $p,q_k \in X$, and assume $R^{\ulm{\I}}_{\I}(\delta_p-\delta_{q_k}) \overset{k \to \infty}{\to} \delta_p$. Then $\|\delta_{q_k}\|_\I \overset{k \to \infty}{\to} 0$. By Proposition \ref{prop:radInorm}, this is equivalent to $\rad{\I}{q_k} \overset{k \to \infty}{\to} 0$. Together with Proposition \ref{prop:radIinf}, this implies $\liminf_{k \to \infty} d(p,q_k) \geq \rad{\ulm{\I}}{p}$. Thus, using Proposition \ref{prop:radInorm} again,
$$\|\delta_p\|_{\ulm{\I}} = \rad{\ulm{\I}}{p} \leq \liminf_{k \to \infty} \min\{d(p,q_k),\rad{\ulm{\I}}{p}+\rad{\ulm{\I}}{q_k}\}$$
$$= \liminf_{k \to \infty}\|\delta_p-\delta_{q_k}\|_{\ulm{\I}}.$$
\end{proof}

\subsection{Proof Sketch of Theorem \ref{thm:semiembedding} and Outline of Lemma Structure} ~\\
\indent The proof of Theorem \ref{thm:semiembedding} in full is more complicated and requires a series of supporting lemmas and theorems. Figure \ref{fig:lemmas} shows the dependency structure. Theorem \ref{thm:quotient} is particularly important in understanding the structure of $\LF{\I}{Y}$. It generalizes Proposition \ref{prop:radInorm} and is used in different ways to understand both the domain of $R^{\F_{n+1}}_{\F_n}$ (through Lemma \ref{lem:quotientclose}) and its codomain (through Lemma \ref{lem:quotientfinite}). Consider a sequence $v_k \in B_{\LF{\F_{n+1}}{X}}$ with $R^{\F_{n+1}}_{\F_n}(v_k) \overset{k \to \infty}{\to} u \in B_{\LF{\F_{n}}{X}}$, as would be needed in the proof of Theorem \ref{thm:semiembedding}. Set $r_p := (1-\eps)\rad{\F_{n+1}}{p}$. After throwing away a term from the image of $R_{\F_n}$, Lemma \ref{lem:quotientfinite} lets us assume $u$ is finitely supported in each ball $B_{r_p}(p)$. After throwing away terms from the image of $R_{\F_{n+1}}$, Lemma \ref{lem:quotientclose} allows us to assume $v_k = \tilde{a}_k + \sum_{i=1}^\infty v_k^i$, where $\tilde{a}_k$ comes from a dual space, $\sum_{i=1}^\infty \|v_k^i\|_{\F_{n+1}} \leq 1$, and each $v_k^i$ is supported inside some ball $B_{r_{p_i}}(p_i)$. Lemma \ref{lem:sequence2} is an important technical result that then lets us (eventually in $k$, depending on $i$) reduce the support of $v_k^i$ by intersecting it with the support of $u$. An essential input into the proof of this lemma is Lemma \ref{lem:isomorphism} (via Lemma \ref{lem:sequence1}), which allows us to ``upgrade" our mode of convergence from $\|\cdot\|_{\F_n}$ to $\|\cdot\|_{\F_{n+1}}$ inside the balls $B_{r_p}(p)$. Since each $v_k^i$ is supported in $B_{r_{p_i}}(p_i)$ and $u$ is finitely supported in $B_{r_{p_i}}(p_i)$, this means $v_k^i$ (eventually) belongs to a finite-dimensional space. Thus $v_k = \tilde{a}_k + \sum_{i=1}^\infty v_k^i$, and all the terms come from a dual space. By weak*-compactness, we get an ultralimit $v \in \LF{\F_{n+1}}{X}$ that satisfies $R^{\F_{n+1}}_{\F_n}(v) = u$, as required.

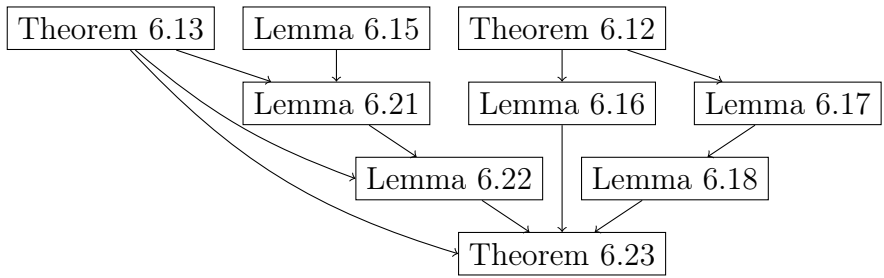
\begin{figure}
\begin{tikzpicture}
\node[rectangle,draw] (injective)  at (-3, 0) {Theorem \ref{thm:injective}};

\node[rectangle,draw] (iso)  at (0, 0) {Lemma \ref{lem:isomorphism}};
\node[rectangle,draw] (quotient) at (3, 0) {Theorem \ref{thm:quotient}};
\node[rectangle,draw] (sequence1) at (0,-1) {Lemma \ref{lem:sequence1}};
\node[rectangle,draw] (quotientclose) at (3,-1) {Lemma \ref{lem:quotientclose}};
\node[rectangle,draw] (quotientseparated) at (6,-1) {Lemma \ref{lem:quotientseparated}};
\node[rectangle,draw] (sequence2) at (1.5,-2) {Lemma \ref{lem:sequence2}};
\node[rectangle,draw] (quotientfinite) at (4.5,-2) {Lemma \ref{lem:quotientfinite}};
\node[rectangle,draw] (semiembedding) at (3,-3) {Theorem \ref{thm:semiembedding}};
\draw[->] (injective) to (sequence1);
\draw[->] (iso) to (sequence1);
\draw[->] (quotient) to (quotientclose);
\draw[->] (quotient) to (quotientseparated);
\draw[->] (injective) to [bend right=10] (sequence2.west);
\draw[->] (sequence1) to (sequence2);
\draw[->] (quotientseparated) to (quotientfinite);
\draw[->] (injective) to [bend right=15] (semiembedding.west);
\draw[->] (sequence2) to (semiembedding);
\draw[->] (quotientclose) to (semiembedding);
\draw[->] (quotientfinite) to (semiembedding);
\end{tikzpicture}
\caption{Dependency structure of the lemmas and theorems in Section \ref{sec:RNP}. There is an arrow from Lemma/Theorem A to Lemma/Theorem B if the proof or statement of Lemma/Theorem B references Lemma/Theorem A.}
\label{fig:lemmas}
\end{figure}

The remaining lemmas and theorems in this section are organized into subsections according to the hypotheses needed on the space $X$ and the ideal $\I$. The subsections are named according to these hypotheses.

\subsection{General $X$, General $\I$}
\begin{definition}
Let $\I \sbs \Cl{X}$ be an ideal. We define $\boldsymbol{\l{\I}{X}}$ to be the Banach space of equivalence classes of real-valued functions $a: X \to \R$ with finite norm $\boldsymbol{\|a\|_{\l{\I}{X}}} := \sum_{p \in X} |a_p|\rad{\I}{p}$. The equivalence relation is $a \sim b$ if $a_p = b_p$ for all $p \in X$ with $\rad{\I}{p} > 0$. We will abuse notation and continue to write $a$ for its equivalence class, but there should never be any trouble verifying well-definedness whenever it arises. Whenever $Y \sbs X$, there is a closed subspace $\l{\I}{Y} = \{a \in \l{\I}{X}: \forall p \in X \setminus Y, a_p = 0\}$. Although `$X$' is absent from the notation, the structure of the space $\l{\I}{Y}$ depends not only on $Y$ but also $X$ because $\rad{\I}{p}$ depends not only on $Y$ but also $X$, even if $p \in Y$. For $a \in \l{\I}{X}$, we define $\supp{a} := \{p \in X: \rad{\I}{p} > 0, a_p \neq 0\}$.

By Proposition \ref{prop:radInorm}, the map $\boldsymbol{Q}_\I: \l{\I}{X} \to \LF{\I}{X}$ defined by $Q_\I(a) = \sum_{p \in X} a_p\delta_p$ is a well-defined linear contraction.
\end{definition}

\begin{theorem} \label{thm:quotient}
For all ideals $\I \sbs \Cl{X}$ and subsets $Y \sbs X$, $R_{\I} \oplus Q_{\I}: \LF{Y} \oplus_1 \l{\I}{Y} \to \LF{\I,Y}$ is a quotient map; it maps the open ball onto the open ball.
\end{theorem}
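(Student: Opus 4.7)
The plan is to proceed by duality: I will show that the adjoint $T^* := (R_\I \oplus Q_\I)^* : \LF{\I,Y}^* \to (\LF{Y} \oplus_1 \l{\I}{Y})^* = \Lip{Y} \oplus_\infty \l{\I}{Y}^*$ is an isometric embedding, which by standard Banach-space duality is equivalent to $R_\I \oplus Q_\I$ mapping the open unit ball onto the open unit ball. Any $\phi \in \LF{\I,Y}^*$ is determined by the values $\alpha_p := \phi(\delta_p)$ for $p \in Y$; a direct calculation shows $R_\I^* \phi$ corresponds to $\alpha$ as an element of $\Lip{Y}$ and $Q_\I^* \phi$ to the functional $(a_p) \mapsto \sum_p a_p \alpha_p$ on $\l{\I}{Y}$, so that $\|T^* \phi\| = \max\bigl\{\|\alpha\|_{\Lip{Y}},\ \sup_{\rad{\I}{p}>0} |\alpha_p|/\rad{\I}{p}\bigr\} =: K$. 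The contractive direction $K \leq \|\phi\|$ is immediate from Proposition \ref{prop:radInorm}, so the substance is in proving $\|\phi\| \leq K$.

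For the reverse inequality, by density of $\LFf{\I,Y}$ in $\LF{\I,Y}$ it suffices to show $|\phi(w)| \leq K \|w\|_{\LF{\I,Y}}$ for each $w = \sum_{p \in F} c_p \delta_p$ with $F \sbs Y$ finite. Fix $\eps > 0$ and set $K' := K(1+\eps)$. The plan is to construct a test function $f \in \Lip{\I,X}$ with $f|_F = \alpha|_F$ and $\|f\|_{\Lip{X}} \leq K'$; then $\phi(w) = \sum_p c_p \alpha_p = w(f) \leq K' \|w\|_{\LF{\I,Y}}$, so $\|\phi\| \leq K'$, and letting $\eps \downarrow 0$ yields $\|\phi\| \leq K$.

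The construction of $f$: partition $F$ by the sign of $\alpha_p$ and define
$$f^+(x) := \max_{p \in F : \alpha_p > 0} \max\bigl(0,\ \alpha_p - K' d(x,p)\bigr),\quad f^-(x) := \min_{p \in F : \alpha_p < 0} \min\bigl(0,\ \alpha_p + K' d(x,p)\bigr),$$
$f := f^+ + f^-$. Then $\supp{f} \sbs \bigcup_{p \in F : \alpha_p \neq 0} B_{|\alpha_p|/K'}(p)$, and the \emph{strict} inequality $|\alpha_p|/K' < \rad{\I}{p}$ (valid for $\alpha_p \neq 0$, since $|\alpha_p| \leq K \rad{\I}{p} < K' \rad{\I}{p}$), finiteness of $F$, and the ideal property of $\I$ together give $\supp{f} \in \I$. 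The interpolation $f|_F = \alpha|_F$ follows from the strict Lipschitz estimate $|\alpha_p - \alpha_q| \leq K d(p,q) < K' d(p,q)$ for $p \neq q$, which forces every competing term in the $\max/\min$ to be zero at each $p \in F$. The basepoint condition $f(0) = 0$ holds because $d(0,p) = 1$ and $|\alpha_p| \leq K < K'$ put every term on the trivial side of $\max(0, \cdot)$ or $\min(0, \cdot)$, so $f \in \Lip{X}$.

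The main obstacle is verifying that $f$ is $K'$-Lipschitz rather than merely $2K'$-Lipschitz. For $x, y$ both in $\supp{f^+}$ (or both in $\supp{f^-}$), the bound is immediate from the $K'$-Lipschitzness of $f^\pm$ individually. The delicate case is $x \in \supp{f^+}$, $y \in \supp{f^-}$: choosing $p, q \in F$ that realize the extrema in the respective definitions,
$$|f(x) - f(y)| = f^+(x) - f^-(y) = (\alpha_p - \alpha_q) - K'\bigl(d(x,p) + d(y,q)\bigr),$$
and combining $\alpha_p - \alpha_q \leq K d(p,q)$ with $d(p,q) \leq d(x,p) + d(x,y) + d(y,q)$ and using $K < K'$ yields $|f(x) - f(y)| \leq K d(x,y) \leq K' d(x,y)$. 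This completes the construction and, with it, the duality argument.
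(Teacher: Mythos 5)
Your proof is correct, and it takes a genuinely different route from the paper's. The paper works on the ``upstream'' side: it builds a concrete contraction $T=(T_1,T_2):\Lip{\I}{X}\to \Lip{F}\oplus_\infty\ell^\infty_\I(F)$, shows $T$ maps the open ball onto the open ball of its image $V$ by taking a McShane extension $f'$ of the given data and truncating it between two bump functions $g_\pm$ with supports in $\I$, dualizes to get the isometry $T^*:V^*\hookrightarrow\Lip{\I}{X}^*$, and then composes with the natural quotient $\LF{F}\oplus_1\l{\I}{F}\twoheadrightarrow V^*$. You instead dualize the map in the statement directly, identify $(R_\I\oplus Q_\I)^*\phi$ with the pair $(\alpha,\alpha)$ where $\alpha_p=\phi(\delta_p)$, and reduce the theorem to the single inequality $\|\phi\|\le \max\{\|\alpha\|_{\Lip{Y}},\sup_p|\alpha_p|/\rad{\I}{p}\}$, which you establish by an explicit ``cone'' interpolant $f=f^++f^-$ rather than by extend-and-truncate. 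The two constructions are cousins (McShane is itself a cone construction), but yours sidesteps the intermediate space $V$ and the extension step, giving a shorter and more self-contained argument; the paper's decomposition through $V^*$ has the small advantage that it makes visible exactly which functionals on $\Lip{\I}{X}$ arise, which is not needed for this theorem. A few points worth making explicit in your write-up: the degenerate case $K=0$ (where $\phi=0$ and there is nothing to prove) should be dispatched first, since your strict inequalities $|\alpha_p|/K'<\rad{\I}{p}$ and $|\alpha_p-\alpha_q|<K'd(p,q)$ need $K>0$; you should note that $\rad{\I}{p}=0$ forces $\alpha_p=0$ (so the bound $|\alpha_p|\le K\rad{\I}{p}$ holds for every $p$, not only those with $\rad{\I}{p}>0$); and the final passage from ``adjoint isometric plus $\|R_\I\oplus Q_\I\|\le 1$'' to ``open ball onto open ball'' uses the open-mapping-theorem lemma that $\overline{T(B_V)}\supseteq B_W$ already implies $T(B_V^\circ)\supseteq B_W^\circ$ — worth a citation or a sentence, since the naive conclusion from the isometry is only the closure statement.
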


\begin{proof}
Let $\I,Y$ be as above. Let $v \in \LFf{\I}{Y}$ with finite support $\supp{v} =: F \sbs Y$ and $\|v\|_\I < 1$. We may assume $\rad{\I}{p} > 0$ for all $p \in F$ since otherwise $\delta_p = 0 \in \LF{\I}{Y}$.

Let $\ell^\infty_\I(F)$ denote the dual space (and predual) of $\l{\I}{F}$. Consider the map $T = (T_1,T_2): \Lip{\I}{X} \to \Lip{F} \oplus_\infty \ell^\infty_\I(F)$ defined by $T_1(f) = f \big|_F$ and $T_2(f) = \sum_{p \in F} f(p)\one_p$. So both $T_1$ and $T_2$ are restriction maps of sorts. Set $V := \im(T)$. We claim that $T$ is a quotient onto $V$. Linearity is clear, and contractivity is as well as soon one recalls (from the proof of Proposition \ref{prop:radInorm}) that $|f(p)| \leq \|f\|\rad{\I}{p}$ whenever $f \in \Lip{\I}{X}$. Now let $(f,\sum_{p \in F} f(p)\one_p) \in V$ with $\max_{p \in F} \{\|f\|,\frac{|f(p)|}{\rad{\I}{p}}\} = 1-\eps$ for some $\eps \in (0,1)$. In particular, $|f(p)| \leq (1-\eps)\rad{\I}{p}$ for all $p \in F$. Extend the domain of $f$ to all of $X$ without increasing the Lipschitz norm using the McShane extension theorem, and call this extension $f'$. Define $F_+ := \{p \in F: f(p) > 0\}$ and $F_- := \{p \in F: f(p) < 0\}$. Define $g_+,g_-: X \to \R$ by $g_+(x) = \max_{p \in F_+} \{0,(1-\eps)\rad{\I}{p}-d(p,x)\}$ and $g_-(x) = -\max_{p \in F_-} \{0,(1-\eps)\rad{\I}{p}-d(p,x)\}$. Then $\|g_+\|,\|g_-\| \leq 1$, $g_+ \geq 0$, $g_- \leq 0$, and $\supp{g_+} \sbs \cup_{p \in F_+} B_{(1-\eps)\rad{\I}{p}}(p)$  and $\supp{g_-} \sbs \cup_{p \in F_-} B_{(1-\eps)\rad{\I}{p}}(p)$. Then $f'' := \max(g_-,\min(g_+,f'))$ satisfies $\|f''\| \leq 1$ and $\supp{f} \sbs \cup_{p \in F} B_{(1-\eps)\rad{\I}{p}}(p)$ which belongs to $\I$ since it is a finite union of sets which themselves belong to $\I$ by definition of $\rad{\I}{p}$. Also, the condition $|f(p)| \leq (1-\eps)\rad{\I}{p}$ for all $p \in F$ implies $f''(p) = f(p)$ for all $p \in F$. This implies $T(f'') = (f,\sum_{p \in F} f(p)\one_p)$, proving that $T$ is a quotient map.

$T^*: V^* \to \Lip{\I}{X}^*$ is then an isometric embedding. The image of $T^*$ is exactly the subspace of $\LFf{\I}{X} \sbs \Lip{\I}{X}^*$ consisting of elements supported in $F$. In particular, $v \in \im(T^*)$. Since $V \sbs \Lip{F} \oplus_\infty \ell^\infty_\I(F)$, there is a quotient map $S: \LF{F} \oplus_1 \l{\I}{F} \twoheadrightarrow V^*$ so that $T^* \comp S: \LF{F} \oplus_1 \l{\I}{F} \to \im(T^*)$ is a quotient map. Then since $v \in \im(T^*)$ with $\|v\|_\I < 1$, there are $w \in \LF{F} \sbs \LF{Y}$ and $a \in \l{\I}{F} \sbs \l{\I}{Y}$ such that $T^*(S(w,a)) = v$ and $\|w\| + \|a\|_{\l{\I}{F}} < 1$. It is easily seen that $T^*(S(w,a)) = R_{\I}(w) + Q_{\I}(a)$, proving the theorem.
\end{proof}

\begin{theorem} \label{thm:injective}
For every ideal $\I \sbs \Cl{X}$, $R^{\ulm{\I}}_{\I}: \LF{\ulm{\I}}{X} \to \LF{\I}{X}$ is injective.
\end{theorem}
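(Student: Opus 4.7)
The plan is to argue by duality: the hypothesis $R^{\ulm{\I}}_{\I}(v) = 0$ says that $v \in \LF{\ulm{\I}}{X} \sbs \Lip{\ulm{\I},X}^*$ vanishes on the subspace $\Lip{\I,X}$, and I must show $v$ vanishes on every $g \in \Lip{\ulm{\I},X}$. The idea is to approximate an arbitrary $g \in \Lip{\ulm{\I},X}$ pointwise by a net of truncations $g_F \in \Lip{\I,X}$, indexed by finite $F \sbs X$, with $\|g_F\|$ uniformly bounded in Lipschitz norm. Since $v$ is a $\|\cdot\|_{\ulm{\I}}$-norm limit of elements $v_n \in \LFf{\ulm{\I}}{X}$, each of which is trivially continuous in pointwise convergence (being a finite linear combination of evaluation functionals), I can then pass $v$ through the approximation.

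For the construction, fix $g \in \Lip{\ulm{\I},X}$, so that $\supp{g} \in \ulm{\I}$ and there exists $\theta > 0$ with $\supp{g} \cap B_\theta(F) \in \I$ for every finite $F \sbs X$. Put $g_F := g \cdot \phi_F$, where $\phi_F(x) := \max\{0, \min\{1, 2 - 2\theta^{-1} d(F,x)\}\}$ is the cutoff from Definition \ref{def:T}; it is $(2/\theta)$-Lipschitz, equals $1$ on $F$, and vanishes outside $B_\theta(F)$. Then $\supp{g_F} \sbs \supp{g} \cap B_\theta(F) \in \I$, so $g_F \in \Lip{\I,X}$, and $v(g_F) = 0$ by hypothesis. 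The product-rule inequality $\|gh\| \leq \|g\|\|h\|_\infty + \|g\|_\infty \|h\|$, combined with the normalization $\diam{X} = 1$ (which gives $\|g\|_\infty \leq \|g\|$), yields a uniform Lipschitz bound $\|g_F\| \leq M$ with $M$ depending only on $\|g\|$ and $\theta$.

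To conclude, pick $v_n \in \LFf{\ulm{\I}}{X}$ with $\|v - v_n\|_{\ulm{\I}} \to 0$ and set $G_n := \supp{v_n}$, a finite set. For any finite $F \sps G_n$, the fact that $\phi_F \equiv 1$ on $F$ makes $g_F$ agree with $g$ on $\supp{v_n}$, so $v_n(g_F) = v_n(g)$; then the triangle inequality bounds $|v(g)|$ by $2M\|v - v_n\|_{\ulm{\I}}$, and sending $n \to \infty$ gives $v(g) = 0$. The argument is fairly direct, and I do not anticipate a serious technical obstacle; the one point requiring care is the interleaving of the norm approximation of $v$ by $v_n$ with the pointwise approximation of $g$ by the net $\{g_F\}$, since the two are indexed differently and a priori unrelated. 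Choosing $F \sps G_n$ before estimating (so that each fixed $v_n$ ``sees'' full agreement of $g_F$ with $g$ on its support) tames this interplay cleanly.
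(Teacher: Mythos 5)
Your proposal is correct and follows essentially the same approach as the paper's proof. You identify the same key mechanism: choose a finitely supported $v_n$ close to $v$, form the cutoff $g_F = T^*_{F,\theta}(g)$ with $F \sps \supp{v_n}$ so that $g_F \in \Lip{\I}{X}$ agrees with $g$ on $\supp{v_n}$, then exploit $v(g_F)=0$ together with the uniform Lipschitz bound on $g_F$ and the norm approximation of $v$ by $v_n$.
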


\begin{proof}
Let $\I \sbs \Cl{X}$ be an ideal. Let $v \in \ker(R^{\ulm{\I}}_{\I})$. Let \\ $f \in B_{\Lip{\ulm{\I}}{X}}$ and $\eps > 0$ be arbitrary. Set $A := \supp{f} \in \ulm{\I}$. Let $\theta > 0$ such that $A \cap B_\theta(F) \in \I$ for every $F \sbs X$ finite. Choose $w \in \LFf{\ulm{\I}}{X}$ such that $F := \supp{w}$ is finite and $\|v-w\|_{\ulm{\I}} < \eps C(\theta)^{-1}$. Set $\tilde{f} := T^*_{F,\theta}(f)$. Then Proposition \ref{prop:T} implies $w(\tilde{f}) = w(f)$ and $\|\tilde{f}\| \leq C(\theta)$. Furthermore, $\supp{\tilde{f}} = A \cap B_\theta(F) \in \I$, so $\tilde{f} \in \Lip{\I}{X}$. Then since $R^{\ulm{\I}}_{\I}(v) = 0$, $v(\tilde{f}) = 0$. Putting these together we get
$$|v(f)| \leq \eps C(\theta)^{-1}\|f\| + |w(f)| = \eps C(\theta)^{-1}\|f\| + |w(\tilde{f})| \leq \eps C(\theta)^{-1} + |w(\tilde{f})|$$
$$\leq \eps C(\theta)^{-1} + \eps C(\theta)^{-1}\|\tilde{f}\| + |v(\tilde{f})| = \eps C(\theta)^{-1} + \eps C(\theta)^{-1}\|\tilde{f}\|$$
$$\leq \eps C(\theta)^{-1} + \eps \leq 2\eps.$$
Since $f \in B_{\Lip{\ulm{\I}}{X}}$ and $\eps > 0$ were arbitrary, this shows $v = 0$.
\end{proof}

\begin{definition}
For a closed subset $A \sbs X$ and ideal $\I \sbs \Cl{X}$, we let $\boldsymbol{\langle A,\I \rangle} \sbs \Cl{X}$ denote the ideal generated by $A$ and $\I$. Precisely, $\langle A,\I \rangle := \{B \cup I: \Cl{X} \ni B \sbs A, I \in \I\}$. We define $\boldsymbol{\langle A \rangle}$ to be the ideal generated by $A$, meaning the collection of closed subsets of $A$.
\end{definition}

\begin{lemma} \label{lem:isomorphism}
For every ideal $\I$ and $A \in \ulm{\I}$, $R^{\langle A,\I \rangle}_\I: \LF{\langle A,\I \rangle}{X} \to \LF{\I}{X}$ is an isomorphism.
\end{lemma}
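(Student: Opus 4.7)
The plan is to establish a two-sided norm equivalence between $\|\cdot\|_{\langle A,\I\rangle}$ and $\|R^{\langle A,\I\rangle}_\I(\cdot)\|_\I$ on the dense subspace $\LFf{\langle A,\I\rangle}{X}$; the isomorphism claim then follows by standard density and continuity arguments. The contractive direction $\|R^{\langle A,\I\rangle}_\I(v)\|_\I \leq \|v\|_{\langle A,\I\rangle}$ holds automatically from the inclusion $\Lip{\I}{X} \sbs \Lip{\langle A,\I\rangle}{X}$ at the level of test functions, so the actual content lies in the reverse inequality.

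First I would fix $\theta > 0$ witnessing $A \in \ulm{\I}$, so that $A \cap B_\theta(F) \in \I$ for every finite $F \sbs X$. The crucial point --- and the reason one gets norm equivalence here while Theorem \ref{thm:injective} yielded only injectivity --- is that $\theta$ depends only on $A$, not on any particular test function. Now, given $v \in \LFf{\langle A,\I\rangle}{X}$ with $F := \supp{v}$ and $f \in \Lip{\langle A,\I\rangle}{X}$ with $\|f\| \leq 1$, set $\tilde f := T^*_{F,\theta}(f)$. By Proposition \ref{prop:T}, $\|\tilde f\| \leq C(\theta)$ and $\supp{\tilde f} \sbs \supp{f} \cap B_\theta(F)$. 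Decomposing $\supp{f} = B \cup I$ with $B \sbs A$ closed and $I \in \I$, we obtain
\[ \supp{\tilde f} \sbs (A \cap B_\theta(F)) \cup I \in \I, \]
so $\tilde f \in \Lip{\I}{X}$. Since the cutoff factor equals $1$ on $F$, the functions $\tilde f$ and $f$ agree on $\supp{v}$, and hence $v(f) = v(\tilde f) \leq C(\theta)\,\|R^{\langle A,\I\rangle}_\I(v)\|_\I$. Taking the supremum over $f$ produces $\|v\|_{\langle A,\I\rangle} \leq C(\theta)\,\|R^{\langle A,\I\rangle}_\I(v)\|_\I$.

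The extension to arbitrary $v \in \LF{\langle A,\I\rangle}{X}$ is immediate by approximating with finitely supported elements and passing to the limit, using continuity of $R^{\langle A,\I\rangle}_\I$. Injectivity of $R^{\langle A,\I\rangle}_\I$ drops out of the bound. For surjectivity, any approximating sequence $u_n \in \LFf{\I}{X}$ for a given $u \in \LF{\I}{X}$ can be reinterpreted as a sequence $\tilde u_n \in \LFf{\langle A,\I\rangle}{X}$ with the same coefficients; the norm equivalence forces $\tilde u_n$ to be Cauchy in $\|\cdot\|_{\langle A,\I\rangle}$, so it converges to some $v \in \LF{\langle A,\I\rangle}{X}$ with $R^{\langle A,\I\rangle}_\I(v) = u$.

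I do not expect a serious obstacle: the hard step is the cutoff argument in the second paragraph, which reuses essentially the same idea as Theorem \ref{thm:injective}, augmented only by the observation that the ideal-part $I$ of $\supp{f}$ needs no truncation and the $A$-part is handled by a single uniform $\theta$. Care is needed only in verifying the two facts about $\tilde f$ (support location and agreement with $f$ on $F$), both of which follow directly from the explicit formula in Definition \ref{def:T}.
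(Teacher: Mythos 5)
Your proof is correct and follows essentially the same route as the paper: both fix $\theta$ once from $A \in \ulm{\I}$, cut off a near-norming test function $f$ by $T^*_{F,\theta}$ with $F = \supp{v}$ so that $\supp{T^*_{F,\theta}(f)} \sbs (A \cap B_\theta(F)) \cup I \in \I$, and thereby bound $R^{\langle A,\I\rangle}_\I$ below by a constant depending only on $\theta$. The paper handles surjectivity by simply noting that the image contains the dense subspace $\LFf{\I}{X}$ and invoking the closed-range consequence of being bounded below, which is exactly your third paragraph unpacked.
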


\begin{proof}
Let $\I \sbs \Cl{X}$ be an ideal and $A \in \ulm{\I}$. Clearly $\im(R^{\langle A,\I \rangle}_\I)$ contains the dense subset $\LFf{\I}{X} \sbs \LF{\I}{X}$. Let $\theta > 0$ such that $A \cap B_\theta(F) \in \I$ for every $F \sbs X$ finite. Let $v \in \LFf{\langle A,\I \rangle}{X}$ with finite support $F := \supp{v}$. Let $f \in B_{\Lip{\langle A,\I \rangle}{X}}$ such that $|v(f)| \geq \|v\|_{\langle A,\I \rangle}/2$. Let $I \in \I$ such that $\supp{f} \sbs A \cup I$. Then by Proposition \ref{prop:T}, $\supp{T^*_{F,\theta}(f)} \sbs \supp{f} \cap B_\theta(F) \sbs (A \cap B_\theta(F)) \cup I \in \I$, and thus $T^*_{F,\theta}(f) \in \Lip{\I}{X}$. Also by Proposition \ref{prop:T}, $\|T^*_{F,\theta}(f)\|$ is bounded by some function of $\theta$, $C(\theta)$ (independent of $f$). Note $T^*_{F,\theta}(f)$ agrees with $f$ on $F$ and so $R^{\langle A,\I \rangle}_\I(v)(T^*_{F,\theta}(f)) = v(f)$. Putting this all together we get
$$\|R^{\langle A,\I \rangle}_\I(v)\|_\I \geq C(\theta)^{-1}|R^{\langle A,\I \rangle}_\I(v)(T^*_{F,\theta}(f))| = C(\theta)^{-1}|v(f)| \geq \frac{C(\theta)^{-1}}{2}\|v\|_{\langle A,\I \rangle}.$$
\end{proof}

\begin{lemma} \label{lem:quotientclose}
For any ideal $\I \sbs \Cl{X}$, $Y \sbs X$, $c \in (0,1)$, and $u \in \LF{\I}{Y}$ with $\|u\|_\I < 1$ there exist, setting $\rho(p,q) := c \min\{\rad{\I}{p},\rad{\I}{q}\}$, $\{b_{(p,q)}\}_{(p,q) \in Y^2: d(p,q) \leq \rho(p,q)}$, and $\{\tilde{a}_p\}_{p \in Y} \sbs \R$ such that
$$u = \sum_{\substack{(p,q) \in Y^2 \\ d(p,q) \leq \rho(p,q)}}b_{(p,q)}(\delta_p-\delta_q) + \sum_{p \in Y} \tilde{a}_{p} \delta_p$$
and
$$\sum_{\substack{(p,q) \in Y^2 \\ d(p,q) \leq \rho(p,q)}}|b_{(p,q)}|d(p,q) + \sum_{p \in Y} |\tilde{a}_{p}|\rad{\I}{p} < 3c^{-1}.$$
\end{lemma}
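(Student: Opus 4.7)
The plan is to combine the decomposition furnished by Theorem~\ref{thm:quotient} with the standard quotient $\pi \colon \ell^1(\tilde Y)/d \to \LF{Y}$, and then re-route the ``far'' edges through the Dirac part. Fix a small $\eps > 0$. By Theorem~\ref{thm:quotient}, write $u = R_\I(w) + Q_\I(a)$ with $\|w\| + \|a\|_{\l{\I}{Y}} < 1 - \eps$. Since $\pi$ is a quotient map, pick $b^0 \in \ell^1(\tilde Y)/d$ with $w = \sum_{(p,q)} b^0_{(p,q)}(\delta_p - \delta_q)$ and $\sum |b^0_{(p,q)}|d(p,q) < \|w\| + \eps$. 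Applying $R_\I$ and adding $Q_\I(a)$ yields
\[
u = \sum_{(p,q)} b^0_{(p,q)}(\delta_p - \delta_q) + \sum_p a_p \delta_p \quad \text{in } \LF{\I}{Y},
\]
with $\sum |b^0_{(p,q)}| d(p,q) + \sum |a_p| \rad{\I}{p} < 1$.

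Next, split the pair index set into \emph{close} pairs ($d(p,q) \le \rho(p,q)$) and \emph{far} pairs. Set $b_{(p,q)} := b^0_{(p,q)}$ on close pairs, and for each far pair rewrite $b^0_{(p,q)}(\delta_p - \delta_q) = b^0_{(p,q)}\delta_p - b^0_{(p,q)}\delta_q$. Collecting the Dirac contributions gives
\[
\tilde a_p := a_p + \sum_{q : (p,q)\text{ far}} b^0_{(p,q)} - \sum_{q : (q,p)\text{ far}} b^0_{(q,p)};
\]
by the $\l{\I}{Y}$-equivalence we may set $\tilde a_p = 0$ when $\rad{\I}{p} = 0$, and absolute convergence of the series at the remaining points will follow from the cost bound proved below.

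The crux is a ``radius triangle inequality'': $\rad{\I}{q} \le \rad{\I}{p} + d(p,q)$ for all $p,q \in X$. Indeed, the triangle inequality gives $B_{r - d(p,q)}(p) \sbs B_r(q)$ whenever $r \ge d(p,q)$, so if $B_r(q) \in \I$ then downward closure forces $B_{r - d(p,q)}(p) \in \I$, hence $\rad{\I}{p} \ge \rad{\I}{q} - d(p,q)$. Applied to a far pair with, WLOG, $\rad{\I}{p} \le \rad{\I}{q}$, we have $\rad{\I}{p} < d(p,q)/c$, and therefore
\[
\rad{\I}{p} + \rad{\I}{q} \le 2\rad{\I}{p} + d(p,q) < (2/c + 1)\,d(p,q) \le 3c^{-1}\,d(p,q),
\]
using $c \in (0,1)$.

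Putting everything together, using $\sum_p |\tilde a_p|\rad{\I}{p} \le \sum_p |a_p|\rad{\I}{p} + \sum_{\text{far}} |b^0_{(p,q)}|(\rad{\I}{p} + \rad{\I}{q})$, the radius-triangle estimate on far pairs, and $3c^{-1} > 1$ to absorb the close-pair contribution,
\[
\sum_{\text{close}} |b^0_{(p,q)}| d(p,q) + \sum_p |\tilde a_p| \rad{\I}{p} \le 3c^{-1} \Bigl( \sum_{(p,q)} |b^0_{(p,q)}| d(p,q) + \sum_p |a_p| \rad{\I}{p} \Bigr) < 3c^{-1},
\]
as required. The main obstacle is finding the radius triangle inequality, which converts the $\ell^1/d$-cost of a far edge into a cost expressible through Dirac-mass norms; once that is established, the remainder is bookkeeping together with an absolute-convergence check for $\tilde a$.
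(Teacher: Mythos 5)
Your proposal is correct and essentially reproduces the paper's own proof: both start from Theorem~\ref{thm:quotient} to split $u = R_\I(w) + Q_\I(a)$, both lift $w$ to an element of $\ell^1(\tilde Y)/d$, both re-route the ``far'' edges (those with $d(p,q) > \rho(p,q)$) into the Dirac part, and both hinge on exactly the same inequality $\rad{\I}{p}+\rad{\I}{q} < 3c^{-1}d(p,q)$ for far pairs, obtained by combining $\min\{\rad{\I}{p},\rad{\I}{q}\} < c^{-1}d(p,q)$ with the 1-Lipschitzness of $\rad{\I}{\cdot}$ (your ``radius triangle inequality,'' which the paper states without proof). The only cosmetic difference is that you track an explicit $\eps$ in the lift of $w$ rather than absorbing it into the strict inequality; the bookkeeping is otherwise identical.
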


\begin{proof}
Let $\I$, $Y$, $c$, $u$, be as above.

First we show that whenever $p,q \in X$ such that $d(p,q) > \rho(p,q) = c\cdot \min\{\rad{\I}{p},\rad{\I}{q}\}$, it holds that
\begin{equation} \label{eq:3c-1}
\rad{\I}{p}+\rad{\I}{q} < 3c^{-1}d(p,q).
\end{equation}
Let $p,q$ be such points, and without loss of generality assume $\rad{\I}{q} \leq \rad{\I}{p}$. So then we have $d(p,q) > c \cdot \rad{\I}{q}$ and thus $2c^{-1}d(p,q)>2\rad{\I}{q}$. It is also an immediate consequence of the triangle inequality and the definition of rad${_\I}$ that $d(p,q) \geq \rad{\I}{p}-\rad{\I}{q}$. Adding these two inequalities, and using the fact that $c \leq 1$, gives us $\rad{\I}{p}+\rad{\I}{q} < 3c^{-1}d(p,q)$.

Now, by Theorem \ref{thm:quotient}, there are $w \in \LF{Y}$ and $a \in \l{\I}{Y}$ such that $R_\I(w) + Q_\I(a) = u$ and $\|w\|+\|a\|_{\l{\I}{Y}} < 1$. Suppose $w = \sum_{(p,q) \in Y^2} b_{(p,q)}(\delta_p-\delta_q)$ with $\sum_{(p,q) \in Y^2}|b_{(p,q)}|d(p,q) + \|a\|_{\l{\I}{Y}} < 1$. Set
$$\tilde{a} := a + \sum_{(p,q) \in Y^2: d(p,q) > \rho(p,q)} b_{(p,q)}(\one_p-\one_q).$$
The following estimate will show that $\tilde{a}$ is a well-defined element of $\l{\I}{Y}$ and moreover that $\sum_{(p,q): d(p,q) \in Y^2 \leq \rho(p,q)}|b_{(p,q)}|d(p,q) + \sum_{p \in Y} |\tilde{a}_{p}|\rad{\I}{p} < 3c^{-1}.$

$$\sum_{(p,q) \in Y^2: d(p,q) \leq \rho(p,q)}|b_{(p,q)}|d(p,q) + \sum_{p \in Y} |\tilde{a}_{p}|\rad{\I}{p}$$
$$\leq \sum_{(p,q) \in Y^2: d(p,q) \leq \rho(p,q)}|b_{(p,q)}|d(p,q)$$
$$+ \|a\|_{\l{\I}{Y}} + \sum_{(p,q) \in Y^2: d(p,q) > \rho(p,q)} |b_{(p,q)}|(\rad{\I}{p}+\rad{\I}{q})$$
$$\overset{\eqref{eq:3c-1}}{\leq} \sum_{(p,q) \in Y^2: d(p,q) \leq \rho(p,q)}|b_{(p,q)}|d(p,q) + \|a\|_{\l{\I}{Y}} + \sum_{(p,q) \in Y^2: d(p,q) > \rho(p,q)} |b_{(p,q)}|3c^{-1}d(p,q)$$
$$\leq 3c^{-1}\left(\sum_{(p,q) \in Y^2}|b_{(p,q)}|d(p,q) + \|a\|_{\l{\I}{Y}}\right) < 3c^{-1}.$$
It is obvious that
$$u = \sum_{(p,q) \in Y^2: d(p,q) \leq \rho(p,q)}b_{(p,q)}(\delta_p-\delta_q) + \sum_{p \in Y} \tilde{a}_{p} \delta_p,$$
proving the lemma.
\end{proof}

\begin{lemma} \label{lem:quotientseparated}
For every ideal $\I \sbs \Cl{X}$ and $u \in \LF{\I}{X}$ with $\|u\| < 1$, there exist $\tilde{w} \in \LF{X}$ and $\tilde{a} \in \l{\I}{X}$ such that $u = R_\I(\tilde{w}) + Q_\I(\tilde{a})$, $\|\tilde{w}\| + \|\tilde{a}\|_{\l{\I}{X}} < 1$, and the following separation property is satisfied: whenever $\tilde{a}_p \cdot \tilde{a}_q < 0$ for some $p \neq q \in X$, then $d(p,q) \geq (\rad{\I}{p}+\rad{\I}{q})/2$.
\end{lemma}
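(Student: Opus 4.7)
The plan is to start from any decomposition provided by Theorem \ref{thm:quotient} and then redistribute mass between close opposite-sign atoms of the $\l{\I}{X}$-piece until no unacceptable pair remains. First, apply Theorem \ref{thm:quotient} to obtain $w \in \LF{X}$ and $a \in \l{\I}{X}$ with $R_\I(w) + Q_\I(a) = u$ and $\|w\| + \|a\|_{\l{\I}{X}} < 1$. Choosing the representative of $a$ with $a_p = 0$ whenever $\rad{\I}{p} = 0$, decompose $a = a^+ - a^-$ with disjoint supports $P, N \sbs \{p \in X : \rad{\I}{p} > 0\}$, and call a pair $(p,q) \in P \times N$ \emph{close} if $d(p,q) < (\rad{\I}{p} + \rad{\I}{q})/2$.

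The heart of the argument is to select a maximal transport plan between $a^+$ and $a^-$ that only moves mass along close pairs. Consider the set of nonnegative functions $\mu : P \times N \to [0,\infty)$ that are supported on close pairs and whose marginals $\mu_1(p) := \sum_q \mu(p,q)$ and $\mu_2(q) := \sum_p \mu(p,q)$ satisfy $\mu_1 \leq a^+|_P$ and $\mu_2 \leq a^-|_N$. Partially order this set by pointwise inequality. For any chain, the pointwise supremum is again admissible (monotone convergence transfers the marginal bounds through the supremum), so Zorn's lemma produces a maximal plan $\mu$.

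Define $\tilde{w} := w + \sum_{(p,q)} \mu(p,q)(\delta_p - \delta_q)$ and $\tilde{a}_p := a_p - \mu_1(p) + \mu_2(p)$. Because $\mu$ lives on close pairs, $\sum_{(p,q)} \mu(p,q) d(p,q) < \tfrac{1}{2} \sum_{(p,q)} \mu(p,q)(\rad{\I}{p} + \rad{\I}{q}) \leq \tfrac{1}{2} \|a\|_{\l{\I}{X}}$, so the series defining $\tilde{w}$ converges absolutely in $\LF{X}$ and $\tilde{a} \in \l{\I}{X}$. Expanding definitions, the transport contribution to $R_\I(\tilde{w})$ is cancelled by the marginal correction in $Q_\I(\tilde{a})$, giving $R_\I(\tilde{w}) + Q_\I(\tilde{a}) = u$. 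A direct calculation then yields
$$\|\tilde{w}\| + \|\tilde{a}\|_{\l{\I}{X}} \leq \|w\| + \|a\|_{\l{\I}{X}} + \sum_{(p,q)} \mu(p,q)\bigl(d(p,q) - \rad{\I}{p} - \rad{\I}{q}\bigr) < 1,$$
since the bracketed quantity is strictly negative on the support of $\mu$.

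For the separation property, suppose $\tilde{a}_p \cdot \tilde{a}_q < 0$. Since $\mu$ is supported on $P \times N$, one has $\tilde{a}_p = a^+_p - \mu_1(p) \geq 0$ for $p \in P$, $\tilde{a}_p = \mu_2(p) - a^-_p \leq 0$ for $p \in N$, and $\tilde{a}_p = 0$ elsewhere. Hence necessarily $p \in P$, $q \in N$, with $(a^+ - \mu_1)(p) > 0$ and $(a^- - \mu_2)(q) > 0$. If $(p,q)$ were close, then adding a small positive mass at $(p,q)$ to $\mu$ would produce a strictly larger admissible plan, contradicting maximality; therefore $d(p,q) \geq (\rad{\I}{p} + \rad{\I}{q})/2$, as required. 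The main technical subtlety is verifying the Zorn hypothesis; once the transport-plan viewpoint is adopted, the rest is bookkeeping.
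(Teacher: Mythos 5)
Your proof is correct, and it takes a genuinely different route from the paper's. Both proofs start from Theorem \ref{thm:quotient} and invoke Zorn's lemma, but the objects over which Zorn is applied differ substantially. The paper works with families of triples $\{(c_\alpha,p_\alpha,q_\alpha)\}$ indexed by countable ordinals and carries along the norm bound $\|w+x\|+\|a-y\| \le \|w\|+\|a\|-\sum|c_\alpha|(\rad{\I}{p_\alpha}+\rad{\I}{q_\alpha}-d(p_\alpha,q_\alpha))$ as an explicit invariant that every family in the poset must satisfy; the preservation of this invariant under unions of chains and the verification that adding a new triple preserves it both require a fairly delicate norm computation. Your transport-plan formulation replaces that invariant with the marginal constraints $\mu_1 \le a^+|_P$, $\mu_2 \le a^-|_N$, which are trivially preserved by pointwise suprema of chains (exchanging $\sup$ with finite sums over a chain, then passing to the full sum). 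The norm bound is then recovered \emph{a posteriori} from the identity $\|\tilde a\|_{\l{\I}{X}} = \|a\|_{\l{\I}{X}} - \sum_{(p,q)}\mu(p,q)(\rad{\I}{p}+\rad{\I}{q})$ together with the triangle-inequality estimate $\|\tilde w\| \le \|w\| + \sum_{(p,q)}\mu(p,q)\,d(p,q)$, and the maximality argument reduces to the observation that if $\mu_1(p)<a^+_p$, $\mu_2(q)<a^-_q$, and $(p,q)$ is close, one can increment $\mu(p,q)$. This buys you a cleaner bookkeeping: the signs are handled once and for all by the decomposition $a = a^+ - a^-$, each pair $(p,q)$ carries a single coefficient $\mu(p,q)$ rather than possibly many $c_\alpha$'s, and the norm-decrease is a corollary rather than an invariant. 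The paper's version is more general-looking (arbitrary signed coefficients, repeated pairs) but that generality is unnecessary, and your version is the one I would prefer to read.
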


\begin{proof}
Let $\I$ and $u$ be as above. By Theorem \ref{thm:quotient}, there exist $w \in \LF{X}$ and $a \in \l{\I}{X}$ such that $u = R_\I(w) + Q_\I(a)$ and $\|w\|+\|a\|_{\l{\I}{X}} < 1$. Set $\tilde{X} := X \cross X \setminus \Delta$, where $\Delta$ is the diagonal. Consider a family of triples $\{(c_\alpha,p_\alpha,q_\alpha)\}_{\alpha \in A} \sbs \R \cross \tilde{X}$ indexed by a countable ordinal $A$ satisfying
\begin{enumerate}
\item \label{item:11} $c_\alpha \neq 0$ for all $\alpha \in A$,
\item \label{item:12} $d(p_\alpha,q_\alpha) < (\rad{\I}{p_\alpha}+\rad{\I}{q_\alpha})/2$ for all $\alpha \in A$,
\item \label{item:13} $\sum_{\alpha \in A} \|c_\alpha(\delta_{p_\alpha}-\delta_{q_\alpha})\| = \sum_{\alpha \in A} |c_\alpha|d(p_\alpha,q_\alpha) \leq 1$ and \\ $\sum_{\alpha \in A} \left\|c_\alpha(\one_{p_\alpha}-\one_{q_\alpha})\right\|_{\l{\I}{X}} = \sum_{\alpha \in A}, |c_\alpha|(\rad{\I}{p_\alpha}+\rad{\I}{q_\alpha}) \leq 2$
\item \label{item:14} $\|w+x\| + \|a-y\|_{\l{\I}{X}} \leq \|w\| + \|a\|_{\l{\I}{X}} - \sum_{\alpha \in A} |c_\alpha|(\rad{\I}{p_\alpha}+\rad{\I}{q_\alpha}-d(p_\alpha,q_\alpha))$, where $x := \sum_{\alpha \in A}c_\alpha(\delta_{p_\alpha}-\delta_{q_\alpha}) \in \LF{X}$ and $y := \sum_{\alpha \in A} c_\alpha(\one_{p_\alpha}-\one_{q_\alpha}) \in \l{\I}{X}$ ($x$ and $y$ are well-defined by (\ref{item:13})).
\end{enumerate}

The collection of all such indexed families is a poset under the relation $\{(c_\alpha,p_\alpha,q_\alpha)\}_{\alpha \in A} \leq \{(b_\beta,r_\beta,s_\beta)\}_{\beta \in B}$ if $A \sbs B$ and for all $\alpha \in A$, $(c_\alpha,p_\alpha,q_\alpha) = (b_\alpha,r_\alpha,s_\alpha)$. This collection is obviously nonempty since it contains $\emptyset$ (and it is a set since we require the indexing sets to be countable ordinals). Every chain is bounded above by the union of the families in the chain. This union belongs to our poset since the four items above are all preserved under limits and items (\ref{item:11}) and (\ref{item:13}) and the fact that $p_\alpha \neq q_\alpha$ for all $\alpha \in A$ imply the indexing set must be countable. Thus we get a maximal element $\{(c_\alpha,p_\alpha,q_\alpha)\}_{\alpha \in A}$ by Zorn's lemma. Set $\tilde{x} := \sum_{\alpha \in A}c_\alpha(\delta_{p_\alpha}-\delta_{q_\alpha}) \in \LF{X}$ and $\tilde{y} := \sum_{\alpha \in A} c_\alpha(\one_{p_\alpha}-\one_{q_\alpha}) \in \l{\I}{X}$. Set $\tilde{w} := w + \tilde{x}$ and $\tilde{a} := a - \tilde{y}$. We complete the proof by showing that $\tilde{w},\tilde{a}$ satisfy the three desired properties. Clearly $R_\I(\tilde{w}) + Q_\I(\tilde{a}) = R_\I(w) + Q_\I(a) = u$, and (\ref{item:12}) and (\ref{item:14}) imply $\|\tilde{w}\|+\|\tilde{a}\|_{\l{\I}{X}} \leq \|w\|+\|a\|_{\l{\I}{X}}$. Since $\|w\|+\|a\|_{\l{\I}{X}} < 1$, it only remains to show the separation property.

Suppose that it does not hold. Let $p \neq q \in X$ such that $\tilde{a}_p \cdot \tilde{a}_q < 0$ and $d(p,q) < (\rad{\I}{p}+\rad{\I}{q})/2$. Without loss of generality, we may assume $\tilde{a}_p > 0$ and $\tilde{a}_q < 0$. Then there are two cases: $|\tilde{a}_p| \geq |\tilde{a}_q|$ and $|\tilde{a}_p| \leq |\tilde{a}_q|$. We will assume the first case holds - the second case can be treated similarly. We consider the new family of triples $\{(c_\alpha,p_\alpha,q_\alpha)\}_{\alpha \in A} \cup \{(-\tilde{a}_q,p,q)\}$ indexed by the successor of $A$. If we can show this family belongs to our collection, this will contradict maximality. Items (\ref{item:11}) and (\ref{item:12}) clearly hold for this family. Next we verify (\ref{item:14}). Letting $x,y$ be as defined in (\ref{item:14}) for the family under consideration, we have

$$\|w+x\| + \|a-y\|_{\l{\I}{X}} = \|w+\tilde{x}-\tilde{a}_q(\delta_p-\delta_q)\| + \|a-\tilde{y}+\tilde{a}_q(\one_p-\one_q)\|_{\l{\I}{X}}$$
$$= \|\tilde{w}-\tilde{a}_q(\delta_p-\delta_q)\| + \|\tilde{a}+\tilde{a}_q(\one_p-\one_q)\|_{\l{\I}{X}}$$
$$= \|\tilde{w}-\tilde{a}_q(\delta_p-\delta_q)\| + \left\|(\tilde{a}_q+\tilde{a}_p)\one_p+\sum_{s \in X \setminus \{p,q\}}\tilde{a}_s\one_s\right\|_{\l{\I}{X}}$$
$$\leq \|\tilde{w}\|+|\tilde{a}_q|d(p,q) + \left\|(\tilde{a}_q+\tilde{a}_p)\one_p+\sum_{s \in X \setminus \{p,q\}}\tilde{a}_s\one_s\right\|_{\l{\I}{X}}$$
$$= \|\tilde{w}\|+|\tilde{a}_q|d(p,q) + |\tilde{a}_q+\tilde{a}_p|\rad{\I}{p}+\sum_{s \in X \setminus \{p,q\}}|\tilde{a}_s|\rad{\I}{s}$$
$$= \|\tilde{w}\|+|\tilde{a}_q|(\rad{\I}{p}+\rad{\I}{q}) - |\tilde{a}_q|(\rad{\I}{p}+\rad{\I}{q}-d(p,q))$$
$$+ |\tilde{a}_q+\tilde{a}_p|\rad{\I}{p}+\sum_{s \in X \setminus \{p,q\}}|\tilde{a}_s|\rad{\I}{s}$$
$$= \|\tilde{w}\|-\tilde{a}_q(\rad{\I}{p}+\rad{\I}{q}) - |\tilde{a}_q|(\rad{\I}{p}+\rad{\I}{q}-d(p,q))$$
$$+ (\tilde{a}_q+\tilde{a}_p)\rad{\I}{p}+\sum_{s \in X \setminus \{p,q\}}|\tilde{a}_s|\rad{\I}{s}$$
$$= \|\tilde{w}\|+\sum_{s \in X}|\tilde{a}_s|\rad{\I}{s} - |\tilde{a}_q|(\rad{\I}{p}+\rad{\I}{q}-d(p,q))$$
$$= \|\tilde{w}\|+\|\tilde{a}\|_{\l{\I}{X}} - |\tilde{a}_q|(\rad{\I}{p}+\rad{\I}{q}-d(p,q))$$
$$\leq \|w\| + \|a\|_{\l{\I}{X}}$$
$$ - \sum_{\alpha \in A} |c_\alpha|(\rad{\I}{p_\alpha}+\rad{\I}{q_\alpha}-d(p_\alpha,q_\alpha)) - |\tilde{a}_q|(\rad{\I}{p}+\rad{\I}{q}-d(p,q))$$
where the last inequality follows from (\ref{item:14}) applied to the family $\{(c_\alpha,p_\alpha,q_\alpha)\}_{\alpha \in A}$ (the maximal family given to us by Zorn's lemma). This shows (\ref{item:14}).

Finally, we use (\ref{item:14}) to verify (\ref{item:13}). We have
$$0 \leq \|w\| + \|a\|_{\l{\I}{X}}$$
$$- \sum_{\alpha \in A} |c_\alpha|(\rad{\I}{p_\alpha}+\rad{\I}{q_\alpha}-d(p_\alpha,q_\alpha)) - |\tilde{a}_q|(\rad{\I}{p}+\rad{\I}{q}-d(p,q))$$
$$\leq 1 - \sum_{\alpha \in A} |c_\alpha|(\rad{\I}{p_\alpha}+\rad{\I}{q_\alpha}-d(p_\alpha,q_\alpha)) - |\tilde{a}_q|(\rad{\I}{p}+\rad{\I}{q}-d(p,q))$$
$$\leq 1 - \sum_{\alpha \in A} |c_\alpha|(\rad{\I}{p_\alpha}+\rad{\I}{q_\alpha})/2 - |\tilde{a}_q|(\rad{\I}{p}+\rad{\I}{q})/2,$$
which gives us
$$\sum_{\alpha \in A} |c_\alpha|d(p_\alpha,q_\alpha) + |\tilde{a}_q|d(p,q)$$
$$\leq \sum_{\alpha \in A} |c_\alpha|(\rad{\I}{p_\alpha}+\rad{\I}{q_\alpha})/2 + |\tilde{a}_q|(\rad{\I}{p}+\rad{\I}{q})/2 \leq 1.$$
\end{proof}

\subsection{Countable $X$, General $\I$}
\begin{lemma} \label{lem:quotientfinite}
Assume $X$ is countable with enumeration $X = \{p_i\}_{i=1}^\infty$. Fix an ideal $\I \sbs \Cl{X}$ and set $\J := \ulm{\I}$. For every $u \in \LF{\I}{X}$ with $\|u\| < 1$ and collection of radii $\{r_p\}_{p \in X}$ with $r_p \in (0,\rad{\J}{p})$, there exist $w \in \LF{X}$ and $a \in \l{\I}{X}$ such that $u = R_\I(w) + Q_\I(a)$, $\|w\| + \|a\|_{\l{\I}{X}} < 1$, and $\supp{a} \cap B_{r_p}(p)$ is finite for each $p \in X$.
\end{lemma}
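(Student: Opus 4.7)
The plan is to start from a decomposition $u = R_\I(w_0) + Q_\I(a_0)$ provided by Theorem~\ref{thm:quotient} with $\|w_0\| + \|a_0\|_{\l{\I}{X}} < 1 - \eps$ for some small $\eps > 0$ (such $\eps$ exists because $\|u\|_\I < 1$), and then iteratively transfer ``small tail'' portions of $a_{i-1}$ sitting inside the ball $B_{r_{p_i}}(p_i)$ to a single carefully chosen point, preserving $u = R_\I(w_i) + Q_\I(a_i)$ throughout and passing to a limit.

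The key quantitative input is Proposition~\ref{prop:radIinf}: since $r_{p_i} \in (0, \rad{\J}{p_i})$, the quantity $\theta_i := \inf_{q \in B_{r_{p_i}}(p_i)} \rad{\I}{q}$ is strictly positive, and hence
\[
\sum_{q \in \supp{a_{i-1}} \cap B_{r_{p_i}}(p_i)} |a_{i-1}(q)| \;\leq\; \|a_{i-1}\|_{\l{\I}{X}}/\theta_i \;<\; \infty.
\]
In particular this support is countable, and one may choose a cofinite subset $T_i$ of it whose $\ell^1$-mass $\sum_{q \in T_i} |a_{i-1}(q)|$ is as small as one wishes.

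The decisive combinatorial choice is where to send the transferred mass, dictated by the requirement that earlier balls not be ``polluted.'' Set $X_i := B_{r_{p_i}}(p_i) \setminus \bigcup_{k < i} B_{r_{p_k}}(p_k)$; if $X_i \neq \emptyset$, pick any $y_i \in X_i$, and otherwise skip step $i$. When acting, define
\[
a_i := a_{i-1} - \sum_{q \in T_i} a_{i-1}(q)\one_q + \Bigl(\sum_{q \in T_i} a_{i-1}(q)\Bigr)\one_{y_i},
\]
and $w_i := w_{i-1} + \sum_{q \in T_i} a_{i-1}(q)(\delta_q - \delta_{y_i})$. This preserves $R_\I(w_i) + Q_\I(a_i) = u$, and the bounds $d(q, y_i) \leq 2 r_{p_i} \leq 2$ and $\rad{\I}{\cdot} \leq 1$ yield $\|w_i - w_{i-1}\| + \|a_i - a_{i-1}\|_{\l{\I}{X}} \leq 4 \sum_{q \in T_i}|a_{i-1}(q)|$. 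Arranging each such perturbation to be $\leq \eps 2^{-i-1}$ makes $(w_i)$ and $(a_i)$ Cauchy, with limits $w, a$ satisfying $u = R_\I(w) + Q_\I(a)$ and $\|w\| + \|a\|_{\l{\I}{X}} < 1$.

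The main obstacle is verifying the support condition, and this is precisely the reason for insisting $y_i \in X_i$: since $y_i \notin B_{r_{p_j}}(p_j)$ for any $j < i$, step $i$ never adds a point to the support of the iterates inside any earlier ball; it can only remove points. An induction on $j$ then shows that $\supp{a_j} \cap B_{r_{p_j}}(p_j)$ is finite: if $X_j \neq \emptyset$ it is contained in the finite set $(\supp{a_{j-1}} \cap B_{r_{p_j}}(p_j) \setminus T_j) \cup \{y_j\}$ by construction, while if $X_j = \emptyset$ then $B_{r_{p_j}}(p_j) \sbs \bigcup_{k < j} B_{r_{p_k}}(p_k)$ and the intersection becomes a finite union of previously handled pieces via the no-pollution property. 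That property then upgrades the finiteness to $\supp{a_i} \cap B_{r_{p_j}}(p_j) \sbs \supp{a_j} \cap B_{r_{p_j}}(p_j)$ for every $i \geq j$. Finally, $\l{\I}{X}$-convergence forces pointwise convergence $a_i(q) \to a(q)$ at every $q$ with $\rad{\I}{q} > 0$, so $\supp{a} \cap B_{r_{p_j}}(p_j) \sbs \supp{a_j} \cap B_{r_{p_j}}(p_j)$, which is finite as required.
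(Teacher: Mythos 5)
Your proof is correct, but it follows a genuinely different route from the paper's, and the two routes differ in complementary ways.

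Where you are \emph{simpler}: you derive the essential summability bound $\sum_{q \in B_{r_p}(p)} |a_q| < \infty$ directly from the definition of the $\l{\I}{X}$-norm together with Proposition~\ref{prop:radIinf} (since $\rad{\I}{q} \geq \theta_p > 0$ on $B_{r_p}(p)$, one gets $\sum_{q \in B_{r_p}(p)} |a_q| \leq \theta_p^{-1}\|a\|_{\l{\I}{X}}$). This observation is valid and completely bypasses Lemma~\ref{lem:quotientseparated}; the paper instead first passes to a representative $(w',a')$ with a sign-separation property on $a'$ and then extracts the same finiteness via a bump-function dual-pairing argument. Your estimate is cleaner.

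Where you are \emph{more complicated}: having this summability, the paper finishes in a single step. It chooses cofinite subsets $CF_i \sbs B_{r_{p_i}}(p_i)$ with $\sum_{q \in CF_i}|a'_q| < 2^{-i}\eps$, sets $CF_\infty := \cup_i CF_i$, and simply absorbs the tail into the $\LF{X}$-component: $w := w' + \sum_{q \in CF_\infty} a'_q\delta_q$ (costing at most $\eps$ since $\|\delta_q\| \leq 1$) and $a := a'|_{X \setminus CF_\infty}$. Then $\supp{a} \cap B_{r_{p_i}}(p_i) \sbs B_{r_{p_i}}(p_i) \setminus CF_i$ is automatically finite, with no iteration and no no-pollution bookkeeping. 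Your iterated relocation to anchor points $y_i \in X_i$, the case split on $X_j = \emptyset$, and the induction verifying that earlier balls are never re-polluted all work, but they are accomplishing by hand what the global one-shot transfer handles for free. Combining your simpler summability estimate with the paper's single-step transfer would yield a proof shorter than either.
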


\begin{proof}
Let $u$ and $r_p$ be as above. By Lemma \ref{lem:quotientseparated}, there exist $w' \in \LF{X}$ and $a' \in \l{\I}{X}$ such that $u = R_\I(w') + Q_\I(a')$, $\|w'\|+\|a'\|_{\l{\I}{X}} < 1$, and whenever $a'_p \cdot a'_q < 0$ for some $p \neq q \in X$, then $d(p,q) \geq (\rad{\I}{p}+\rad{\I}{q})/2$.

Let $i \geq 1$. Since $r_{p_i} < \rad{\J}{p_i}$, Proposition \ref{prop:radIinf} implies \\ $\inf_{q \in  B_{r_{p_i}}(p_i)} \rad{\I}{q} \geq \theta$ for some $\theta > 0$. Set $S^+ := \{p \in X: a'_p > 0\}$ and $S^- := \{p \in X: a'_p < 0\}$. Let $F \sbs (S^+ \cup S^-) \cap B_{r_{p_i}}(p_i)$ be an arbitrary finite subset. Consider the functions $f^+,f^-: X \to \R$ defined by $f^+(x) = \max(0,1-2\theta^{-1}d(F \cap S^+,x))$ and $f^-(x) = \max(0,1-2\theta^{-1}d(F \cap S^-,x))$. We make several observations.
\begin{itemize}
\item $\supp{f^+} \cup \supp{f^-} \sbs \cup_{q \in F} B_{\theta/2}(q)$. This is a finite union of sets in $\I$ and is thus itself in $\I$.
\item $\|f^+\|,\|f^-\| \leq 2\theta^{-1}$.
\item $\im(f^+),\im(f^-) \sbs [0,1]$.
\item $f^+(q) = 1$ for all $q \in F \cap S^+$ and $f^-(q) = 1$ for all $q \in F \cap S^-$.
\item Since $d(p,q) \geq (\rad{\I}{p}+\rad{\I}{q})/2$ whenever $a'_p \cdot a'_q < 0$, $\supp{f^+} \cap S^- = \supp{f^-} \cap S^+ = \emptyset$.
\end{itemize}
Together these imply
$$4\theta^{-1} \geq \|f^+-f^-\|\|Q_\I(a')\|_\I \geq |Q_\I(a')(f^+-f^-)| = \left|\sum_{q \in \supp{a'}} a'_q(f^+(q)-f^-(q))\right|$$
$$= \left|\sum_{q \in S^+} a'_qf^+(q) + \sum_{q \in S^-} (-a'_q)f^-(q)\right| = \sum_{q \in S^+} |a'_q|f^+(q) + \sum_{q \in S^-} |a'_q|f^-(q)$$
$$\geq \sum_{q \in F \cap S^+} |a'_q|f^+(q) + \sum_{q \in F \cap S^-} |a'_q|f^-(q) = \sum_{q \in F} |a'_q|.$$
Since $F \sbs (S^+ \cup S^-) \cap B_{r_{p_i}}(p_i)$ was arbitrary, we get $\sum_{q \in B_{r_{p_i}}(p_i)} |a'_q| \leq 4\theta^{-1}$. Thus, there is some cofinite subset $CF_i \sbs B_{r_{p_i}}(p_i)$ such that $\sum_{q \in CF_i} |a'_q| < 2^{-i}\eps$, where $\eps$ is any number in $(0,1-\|w'\|-\|a'\|_{\l{\I}{X}})$. Then setting $CF_\infty := \cup_{i=1}^\infty CF_i$, we get $\sum_{q \in CF_\infty} |a'_q| \leq \eps$. Then we set $w := w' + \sum_{q \in CF_\infty} a'_q\delta_q$ and $a := \sum_{q \in X \setminus CF_\infty} a'_q\one_q \in \l{\I}{X}$. Then $R_\I(w) + Q_\I(a) = R_I(w') + Q_\I(a') = u$, $\|w\| + \|a\|_{\l{\I}{X}} \leq \|w'\| + \eps + \|a'\|_{\l{\I}{X}} < 1$, and $\supp{a} \cap B_{r_{p_i}}(p_i) \sbs B_{r_{p_i}}(p_i) \setminus CF_i$ for each $i \geq 1$.
\end{proof}

\subsection{Countable and Discrete $X$, $\I = \F_n$}

\begin{definition}
Recall that $\F \sbs \Cl{X}$ is the ideal of finite subsets of $X$. Suppose $X$ is discrete. Then for each $p \in X$, $\one_p \in \Lip{\F}{X}$. For any $\lambda \in \Lip{\F}{X}^*$, we write $\lambda = \sum_{p \in X} c_p\delta_p$ if $\lambda(\one_p) = c_p$ for each $p \in X$. This gives a well-defined linear injection $\Lip{\F}{X}^* \to \R^{X}$. We emphasize that the notation is purely formal and that we make no attempt to interpret it as a limit of finite sums. Whenever $\I \sbs \Cl{X}$ is an ideal such that $\F \sbs \I$ and $R^\I_{\F}: \LF{\I}{X} \to \LF{\F}{X}$ is injective and $v \in \LF{\I}{X}$, we also write $v = \sum_{p \in X} c_p\delta_p$ to mean $R^\I_{\F}(v) = \sum_{p \in X} c_p\delta_p$. Since $R^\I_{\F}$ is injective, this again gives a well-defined linear injection $\LF{\I}{X} \to \R^X$. For example, Theorem \ref{thm:injective} and induction imply $R^{\F_n}_{\F}$ is injective for all $n \geq 0$. Whenever $v = \sum_{p \in X} c_p\delta_p$, we define $\boldsymbol{\supp{v}} := \{p \in X: c_p \neq 0\} = \{p \in X: v(\one_p) \neq 0\}$.
\end{definition}

A natural question is whether $v \in \LF{\I}{\supp{v}}$. While we believe this is true, we prove only a very restricted version in Lemma \ref{lem:supp}. This lemma will be coupled with Lemma \ref{lem:quotientclose} in the proof of Theorem \ref{thm:semiembedding}.

\begin{lemma} \label{lem:supp}
Assume $X$ is discrete. Fix an ideal $\I \sbs \Cl{X}$ such that $\F \sbs \I$ and $R^\I_{\F}$ is injective. For all $Y,F \sbs X$ with $F$ finite, $v \in \LF{\I}{X}$, and $\eps>0$, if $\supp{v} \sbs Y \cup F$, then $v \in \LF{\I}{B_{\eps}(Y) \cup F}$.
\end{lemma}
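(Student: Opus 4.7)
My plan is to construct a Lipschitz cutoff $\psi : X \to [0,1]$ with $\psi|_{Y \cup F} = 1$ and $\psi$ vanishing outside $B_\eps(Y) \cup F$, and then to show that the dual of the multiplication operator $M_\psi : \Lip{\I,X} \to \Lip{\I,X}$, $M_\psi f := \psi f$, both (i) sends $v$ into $\LF{\I}{B_\eps(Y) \cup F}$ and (ii) fixes $v$. Combined these give $v = M_\psi^*(v) \in \LF{\I}{B_\eps(Y) \cup F}$. The hypothesis that $R^{\I}_{\F}$ is injective -- which is what allows one to identify $v \in \LF{\I}{X}$ with its formal expansion $(v(\one_p))_{p \in X}$ -- is precisely what drives (ii).

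To build $\psi$, I set $\psi_Y(x) := \max(0, \min(1, 2 - 2\eps^{-1} d(Y,x)))$, a $2\eps^{-1}$-Lipschitz function equal to $1$ on $B_{\eps/2}(Y) \supseteq Y$ and vanishing off $B_\eps(Y)$. Using that $X$ is discrete and $F$ is finite, I choose $\theta > 0$ with $B_\theta(F) = F$, and set $\psi_F(x) := \max(0, \min(1, 2 - 2\theta^{-1} d(F,x)))$: this is $2\theta^{-1}$-Lipschitz and agrees pointwise with $\one_F$. Then $\psi := \max(\psi_Y, \psi_F)$ is Lipschitz with $\psi|_{Y \cup F} = 1$ and $\{p : \psi(p) \neq 0\} \subseteq B_\eps(Y) \cup F$. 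Because $\supp(\psi f) \subseteq \supp f \in \I$, $M_\psi$ preserves $\Lip{\I,X}$; it is bounded by the Leibniz estimate $\|\psi f\| \leq \|\psi\|_\infty \|f\| + \|f\|_\infty \|\psi\|$ together with $\|f\|_\infty \leq \|f\|$ (which holds since $\diam{X} = 1$). Its dual $M_\psi^*$ is therefore a bounded operator on $\LF{\I}{X}$ acting by $M_\psi^*(\delta_p) = \psi(p)\, \delta_p$ on point measures.

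For (i), I pick $v_n \in \LFf{\I}{X}$ with $v_n \to v$ in norm; each $M_\psi^*(v_n)$ is a finite combination of $\delta_p$'s with $p \in \{\psi \neq 0\} \subseteq B_\eps(Y) \cup F$, hence $M_\psi^*(v_n) \in \LFf{\I}{B_\eps(Y) \cup F}$, and norm-continuity of $M_\psi^*$ places the limit $M_\psi^*(v)$ in $\LF{\I}{B_\eps(Y) \cup F}$. For (ii), the injectivity of $R^{\I}_{\F}$ together with the fact that $\Lip{\F,X}$ is algebraically spanned by $\{\one_p\}_{p \in X}$ says that any element of $\LF{\I}{X}$ is uniquely determined by the coefficients it assigns to the $\one_p$'s. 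I would then compute $(M_\psi^* v)(\one_p) = v(\psi \cdot \one_p) = v(\psi(p) \one_p) = \psi(p)\, v(\one_p)$, noting that $\psi \cdot \one_p = \psi(p)\,\one_p$ as functions: for $p \in Y \cup F$ this equals $v(\one_p)$ since $\psi(p) = 1$, and for $p \notin Y \cup F$ it equals $0 = v(\one_p)$ by the support hypothesis. So $M_\psi^* v$ and $v$ have identical formal expansions, hence are equal.

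The one genuinely non-routine step is manufacturing $\psi_F = \one_F$ as a Lipschitz function, which requires both that $X$ is discrete (so each point of $F$ is isolated) and that $F$ is finite (so a single $\theta$ isolates all of $F$ at once); this is also the reason the $\eps$-inflation of $Y$ is indispensable, since without slack one cannot in general expect $\one_Y$ itself to be Lipschitz. Everything else -- the norm bound on $M_\psi$, the dual computation on $\delta_p$, and the formal-expansion uniqueness -- follows immediately from the definitions.
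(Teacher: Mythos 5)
Your proof is correct and is essentially the same as the paper's. Both arguments construct a Lipschitz cutoff $\psi$ equal to $1$ on $Y \cup F$ and vanishing off $B_\eps(Y) \cup F$ (exploiting discreteness of $X$ and finiteness of $F$ to make $\one_F$ Lipschitz), pass to the dual of multiplication by $\psi$ on $\Lip{\I}{X}$, observe that the dual operator maps $\LF{\I}{X}$ into $\LF{\I}{B_\eps(Y) \cup F}$ via density of $\LFf{\I}{X}$, and then use injectivity of $R^\I_\F$ to reduce the identity $M_\psi^* v = v$ to the pointwise check $\psi(p)\,v(\one_p) = v(\one_p)$, which your two-case split ($p \in Y \cup F$ versus $p \notin Y \cup F$) handles exactly as the paper's split on $\supp{v}$ does; the only difference is the cosmetic choice of how the cutoff is assembled from the pieces $\psi_Y$ and $\one_F$.
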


\begin{proof}
Let $Y,F,v,\eps$ be as above. Assume $\supp{v} \sbs Y \cup F$. Let $\psi': X \to [0,1]$ be a Lipschitz function equal to 1 on $Y$ and 0 on $X \setminus B_{\eps}(Y)$. Since $F$ is finite and $X$ is discrete, $\one_F: X \to [0,1]$ is Lipschitz. Set $\psi := \max(\psi'+\one_F,1): X \to [0,1]$, so that $\psi$ is Lipschitz, equals 1 on $Y \cup F$, and equals 0 on $X \setminus (B_{\eps}(Y) \cup F)$. The map $f \mapsto \psi \cdot f$ is a bounded linear operator $\Lip{\I}{X} \to \Lip{\I}{X}$, and its dual $T: \Lip{\I}{X}^* \to \Lip{\I}{X}^*$ maps $\LFf{\I}{X}$ into $\LFf{\I}{B_{\eps}(Y) \cup F}$ and thus also $\LF{\I}{X}$ into $\LF{\I}{B_{\eps}(Y) \cup F}$. We will show $v = T(v)$, completing the proof because $T(v) \in \LF{\I}{B_{\eps}(Y) \cup F}$.

By hypothesis, $R^\I_{\F}$ is injective, so it suffices to show $R^\I_{\F}(v) = R^\I_{\F}(T(v))$. Since $\{\one_p\}_{p \in X}$ spans $\Lip{\F}{X}$, it suffices to show \\ $[R^\I_{\F}(v)](\one_p) = [R^\I_{\F}(T(v))](\one_p)$ for every $p \in X$. Let $p \in X$. We have
$$[R^\I_{\F}(v)](\one_p) = v(\one_p)$$
and
$$[R^\I_{\F}(T(v))](\one_p) = [T(v)](\one_p) = v(\psi \cdot \one_p) = \psi(p)v(\one_p)$$
so we need to show $v(\one_p) = \psi(p)v(\one_p)$. There are two cases to consider, $p \in \supp{v}$ and $p \in X \setminus \supp{v}$. In the first case, $\psi(p) = 1$ by definition of $\psi$ and the assumption $\supp{v} \sbs Y \cup F$, and in the second case, $v(\one_p) = 0$ by definition of $\supp{v}$. In either case we have $v(\one_p) = \psi(p)v(\one_p)$.
\end{proof}

\begin{lemma} \label{lem:sequence1}
Assume $X$ is discrete. Fix an ideal $\I \sbs \Cl{X}$ such that $\F \sbs \I$ and $R^\I_{\F}$ is injective. Set $\J := \ulm{\I}$ so that Theorem \ref{thm:injective} implies $R^\J_{\F}$ is also injective (for example, $\I = \F_n$ and $\J = \F_{n+1}$). For any $p \in X$, $r \in (0,\rad{\J}{p})$, $u \in \LF{\I}{X}$ such that $\supp{u} \cap B_{r}(p)$ is finite, and sequence $v_n \in \LF{\J}{X}$ such that $\sup_n \|v_n\|_\J < 1$ and $R^\J_\I(v_n) \to u$, and $c \in (0,1)$, there exists another sequence $\tilde{v}_n$ such that
\begin{itemize}
\item $\sup_n \|\tilde{v}_n\|_\J < 1$,
\item $R^\J_\I(\tilde{v}_n) \to u$,
\item $\supp{\tilde{v}_n} \sbs \supp{v_n}$,
\item and $\supp{\tilde{v}_n} \cap B_{cr}(p) = \supp{u} \cap B_{cr}(p)$.
\end{itemize}
\end{lemma}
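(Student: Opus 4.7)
The plan is to kill the coefficients of $v_n$ on the ``bad'' set $B_{cr}(p) \setminus F$, where $F := \supp{u} \cap B_{cr}(p)$, via a Lipschitz cutoff multiplier, and to show the correction is $\|\cdot\|_\J$-small by upgrading the given $\|\cdot\|_\I$-convergence using Lemma~\ref{lem:isomorphism}. Since $\supp{u} \cap B_r(p)$ is finite, I first choose $r' \in (cr, r)$ so that no point of $\supp{u}$ lies in the shell $\{x : cr < d(p,x) \leq r'\}$, which guarantees $F = \supp{u} \cap B_{r'}(p)$. Next pick $r'' \in (r, \rad{\J}{p})$, so $A := B_{r''}(p) \in \ulm{\I}$. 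Let $\phi, \eta : X \to [0,1]$ be Lipschitz cutoffs with $\phi \equiv 1$ on $B_{cr}(p)$, $\phi \equiv 0$ off $B_{r'}(p)$, and $\eta \equiv 1$ on $B_{r'}(p)$, $\eta \equiv 0$ off $A$. Then $M_\phi$ is a bounded multiplier on both $\LF{\I}{X}$ and $\LF{\J}{X}$ commuting with $R^\J_\I$, with $\supp{M_\phi v} \sbs B_{r'}(p)$ for every $v$.

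The core step is to show $\|M_\phi v_n - M_\phi u\|_\J \to 0$. The hypothesis gives $R^\J_\I(M_\phi v_n) = M_\phi R^\J_\I(v_n) \to M_\phi u$ in $\|\cdot\|_\I$. Since $A \in \ulm{\I}$, Lemma~\ref{lem:isomorphism} says $R^{\langle A, \I \rangle}_\I$ is an isomorphism, so (viewing $M_\phi u$ as the finite sum $\sum_{q \in F} u(\one_q)\delta_q$ sitting in $\LFf{\langle A, \I \rangle}{X}$) we obtain $\|R^\J_{\langle A, \I \rangle}(M_\phi v_n) - M_\phi u\|_{\langle A, \I \rangle} \to 0$. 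To transfer this back to $\|\cdot\|_\J$, observe that for every $f \in \Lip{\J}{X}$ with $\|f\| \leq 1$, because $M_\phi v_n - M_\phi u$ is supported inside $B_{r'}(p)$ where $\eta \equiv 1$,
\[(M_\phi v_n - M_\phi u)(f) = \bigl[R^\J_{\langle A, \I \rangle}(M_\phi v_n) - M_\phi u\bigr](\eta f),\]
and since $\eta f \in \Lip{\langle A, \I \rangle}{X}$ has Lipschitz norm bounded by a constant depending only on $\eta$, taking the supremum over $f$ yields the desired $\|\cdot\|_\J$-convergence.

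With the core step in hand, define, for all $n \geq N_0$ large enough that $F \sbs \supp{v_n}$ (which holds for large $n$ since $v_n(\one_q) \to u(\one_q) \neq 0$ for each $q \in F$ by weak-$*$ convergence, using $\one_q \in \Lip{\I}{X}$ as $X$ is discrete and $\F \sbs \I$),
\[\tilde v_n := v_n - M_\phi v_n + \sum_{q \in F} v_n(\one_q)\delta_q.\]
A coefficient-by-coefficient case check on $q$'s location (outside $B_{r'}(p)$; in the annulus $B_{r'}(p) \setminus B_{cr}(p)$; in $B_{cr}(p) \setminus F$; in $F$) shows $\supp{\tilde v_n} = (\supp{v_n} \setminus B_{cr}(p)) \cup F$, yielding conditions (3) and (4). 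Our choice of $r'$ makes $M_\phi u = \sum_{q \in F} u(\one_q)\delta_q$, so $M_\phi v_n - \sum_{q \in F} v_n(\one_q)\delta_q \to 0$ in $\|\cdot\|_\J$ by the core step combined with finite-dimensional convergence of the correction sum; hence $\|\tilde v_n\|_\J < 1$ eventually and $R^\J_\I(\tilde v_n) \to u - M_\phi u + M_\phi u = u$. The finitely many indices $n < N_0$ are handled by reindexing to a tail.

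The main obstacle is the norm upgrade in the core step: Lemma~\ref{lem:isomorphism} a priori only delivers equivalence of $\|\cdot\|_\I$ and $\|\cdot\|_{\langle A, \I \rangle}$ on $\LF{\langle A, \I \rangle}{X}$, not $\|\cdot\|_\J$. The two-cutoff device, with $\phi$ localizing $v_n$ inside $B_{r'}(p)$ and $\eta$ converting test functions from $\Lip{\J}{X}$ to $\Lip{\langle A, \I \rangle}{X}$ without changing the action on elements supported in $B_{r'}(p)$, is what makes the transfer from $\langle A, \I \rangle$-norm to $\J$-norm possible; the careful selection of $r'$ to avoid $\supp{u}$ in the annulus is what keeps the correction sum compatible with (1).
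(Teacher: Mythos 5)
Your proof is correct, and it follows essentially the same route as the paper's: both apply Lemma~\ref{lem:isomorphism} on a ball $A$ containing $p$ with $A \in \ulm{\I}$ in order to upgrade $\|\cdot\|_\I$-convergence to $\|\cdot\|_\J$-convergence of the localized sequence, and then subtract off the localized piece outside $F := \supp{u} \cap B_{cr}(p)$. The packaging differs slightly: the paper encodes the norm upgrade once and for all in a single composite operator $S = R_\J \comp T \comp R^{\langle A_1,\I\rangle}_{\langle A_1\rangle}\comp (R^{\langle A_1,\I\rangle}_\I)^{-1}: \LF{\I}{X} \to \LF{\J}{X}$ built from a single cutoff $\psi$ supported in $B_r(p)$, whereas you work with the multiplier $M_\phi$ on $\LF{\J}{X}$ directly and carry out the upgrade by a separate duality argument with a second cutoff $\eta$. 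Your extra radius $r' \in (cr,r)$ chosen to miss $\supp{u}$ in the shell is what makes $M_\phi u$ literally equal to the finite sum $\sum_{q \in F}u(\one_q)\delta_q$ with unweighted coefficients; the paper avoids this choice by subtracting the $\psi$-weighted finite sum $\sum_{q\in F}c_{q,n}\psi(q)\delta_q$ instead and letting $u$'s vanishing off $F$ inside $B_r(p)$ do the work. Both treatments share the same minor bookkeeping wrinkle at the end: after discarding finitely many initial indices so that $F \sbs \supp{v_n}$ and the norm bound holds, the reindexed sequence literally satisfies $\supp{\tilde v_n} \sbs \supp{v_{n+N}}$ rather than $\supp{v_n}$, which is harmless for the application in Lemma~\ref{lem:sequence2} but worth being aware of.
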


\begin{proof}
Let $p,r,u,c$ be as above. Set $A_1 := B_{r}(p)$ and $A_2 := B_{cr}(p)$ so that $A_2 \sbs A_1 \in \J$. Let $\psi$ be a Lipschitz bump function taking values in $[0,1]$, identically 1 on $A_2$, and 0 on $X \setminus A_1$ (for example, $\psi(q) = \max\{0,\min\{1,(1-c)^{-1}-(1-c)^{-1}r^{-1}d(p,q)\}\}$). The function $\psi$ induces a bounded linear operator $\Lip{A_1} \to \Lip{\langle A_1 \rangle}{X}$ given by $f \mapsto \psi\tilde{f}$, where $\tilde{f}$ is any norm preserving extension of $f$ to all of $X$ (which exists by McShane). It's clear that this map does not depend on the choice of $\tilde{f}$, is linear, and has operator norm depending only on $c$ and $r$. The dual $T$ of this bounded operator maps $\LFf{\langle A_1 \rangle}{X}$ into $\LFf{A_1}$, and thus also $\LF{\langle A_1 \rangle}{X} into \LF{A_1} \sbs \LF{X}$.

Since $A_1 \in \J = \ulm{\I}$, Lemma \ref{lem:isomorphism} implies $R^{\langle A_1,\I \rangle}_\I: \LF{\langle A_1,\I \rangle}{X} \to \LF{\I}{X}$ is an isomorphism. Thus, $S := R_\J \comp T \comp R^{\langle A_1,\I \rangle}_{\langle A_1 \rangle} \comp (R^{\langle A_1,\I \rangle}_{\I})^{-1}: \LF{\I}{X} \to \LF{\J,X}$ is a bounded linear map given by
$$S\left(\sum_{q \in X} c_q\delta_q\right) = \sum_{q \in X} c_q\psi(q)\delta_q = \sum_{q \in A_2} c_q\delta_q + \sum_{q \in A_1 \setminus A_2} c_q\psi(q)\delta_q.$$
Suppose $v_n = \sum_{p \in X} c_{p,n}\delta_p$ and $u = \sum_{p \in X} c_p\delta_p$. Then since $R^\J_\I(v_n) \to u$, $S(R^\J_\I(v_n)) \to S(u) \in \LF{\J}{X}$, which translates to
$$\sum_{q \in A_2} c_{q,n}\delta_{q} + \sum_{q \in A_1 \setminus A_2} c_{q,n}\psi(q)\delta_q \to \sum_{q \in A_2} c_{q}\delta_{q} + \sum_{q \in A_1 \setminus A_2} c_{q}\psi(q)\delta_q \in \LF{\J}{X}.$$

Set $F := \supp{u} \cap A_1$, which is finite by assumption. Since $X$ is discrete, $\one_q$ is a Lipschitz function for all $q \in X$ and hence $\sum_{q \in F} c_{q,n}\psi(q)\delta_q \to \sum_{q \in F} c_{q}\psi(q)\delta_q \in \LF{\J}{X}$. Combining these implies
$$\sum_{q \in A_2 \setminus F} c_{q,n}\delta_{q} + \sum_{q \in A_1 \setminus (A_2 \cup F)} c_{q,n}\psi(q)\delta_q \to 0 \in \LF{\J}{X}.$$
Choose $N$ large enough so that
$$\left\| \sum_{q \in A_2 \setminus F} c_{q,n}\delta_{q} + \sum_{q \in A_1 \setminus (A_2 \cup F)} c_{q,n}\psi(q)\delta_q \right\|_\J < \frac{1 - \sup_n \|v_n\|}{2}$$
for all $n \geq N$. Then consider the $\LF{\J}{X}$-valued sequence $\nu_n$ defined by
$$\nu_n = \left\{\begin{matrix} 0 & n < N \\ \sum_{q \in A_2 \setminus F} c_{q,n}\delta_{q} + \sum_{q \in A_1 \setminus (A_2 \cup F)} c_{q,n}\psi(q)\delta_q & n \geq N \end{matrix}\right..$$
We have $\sup_n \|\nu_n\|_\J < 1 - \sup_n \|v_n\|$ and $\|\nu_n\|_\J \to 0$. Set $\tilde{v}_n := v_n - \nu_n$. Thus, $\sup_n \|\tilde{v}_n\|_\J < 1$, $R^\J_\I(\tilde{v}_n) \to u$, $\supp{\tilde{v}_n} \sbs \supp{v_n}$, and $\supp{\tilde{v}_n} \cap A_2 = F \cap A_2 = \supp{u} \cap A_2$ for all $n \geq N$. By omitting all the terms before $N$ and shifting the index, we may assume $\supp{\tilde{v}_n} \cap A_2 = \supp{u} \cap A_2$ for all $n \geq 1$.
\end{proof}

\begin{lemma} \label{lem:sequence2}
Assume $X$ is countable with enumeration $X = \{p_i\}_{i=1}^\infty$. Assume $X$ is discrete. Fix an ideal $\I \sbs \Cl{X}$ such that $\F \sbs \I$ and $R^\I_{\F}$ is injective. Set $\J := \ulm{\I}$ so that Theorem \ref{thm:injective} implies $R^\J_{\F}$ is also injective (for example, $\I = \F_n$ and $\J = \F_{n+1}$). For any collection of radii $\{r_p\}_{p \in X}$ with $r_p \in (0,\rad{\J}{p})$, $u \in \LF{\I}{X}$ such that $\supp{u} \cap B_{r_p}(p)$ is finite for all $p \in X$, sequence $v_n \in \LF{\J}{X}$ such that $\sup_n \|v_n\|_\J < 1$ and $R^\J_\I(v_n) \to u$, and $c \in (0,1)$, there exists another sequence $\tilde{v}_n \in \LF{\J}{X}$ such that
\begin{itemize}
\item $\|\tilde{v}_n\|_\J < 1$ for every $n \geq 1$,
\item $R^\J_\I(\tilde{v}_n) \to u$,
\item for all $p_i \in X$ and $n \geq i$, $\supp{\tilde{v}_n} \cap B_{cr_{p_i}}(p_i) \sbs \supp{u} \cap B_{cr_{p_i}}(p_i)$.
\end{itemize}
\end{lemma}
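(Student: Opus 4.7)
The plan is to apply Lemma~\ref{lem:sequence1} iteratively, once at each point $p_i$ in the enumeration of $X$, and then take a diagonal $W_m := v_m^{(m)}$ followed by a reindexing. Handling one ball at a time means each application of Lemma~\ref{lem:sequence1} only needs to secure the desired inclusion at a single $B_{cr_{p_i}}(p_i)$, and the $\supp{v_n^{(i)}} \sbs \supp{v_n^{(i-1)}}$ clause automatically preserves the inclusions established at earlier steps.

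Setting $v_n^{(0)} := v_n$, at step $i \geq 1$ I will invoke Lemma~\ref{lem:sequence1} on $v_n^{(i-1)}$ at $p_i$ with a radius $r \in (cr_{p_i}, r_{p_i})$ (so $r \in (0, \rad{\J}{p_i})$) and parameter $c' := cr_{p_i}/r \in (0,1)$; finiteness of $\supp{u} \cap B_r(p_i)$ is inherited from that of $\supp{u} \cap B_{r_{p_i}}(p_i)$. As established in the proof of Lemma~\ref{lem:sequence1}, the output takes the form $v_n^{(i-1)} - \mu_n^{(i)}$ with $\|\mu_n^{(i)}\|_\J \to 0$ as $n \to \infty$. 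I will choose $M_i$ large enough that $\sup_{n \geq M_i} \|\mu_n^{(i)}\|_\J < \eps_i := 2^{-i-1}(1 - \sup_n \|v_n\|_\J)$ and truncate: let $\nu_n^{(i)} := \mu_n^{(i)}$ for $n \geq M_i$ and $\nu_n^{(i)} := 0$ otherwise, and define $v_n^{(i)} := v_n^{(i-1)} - \nu_n^{(i)}$. Since $\sum_i \eps_i = (1-\sup_n \|v_n\|_\J)/2$, a telescoping bound yields $\sup_n \|v_n^{(i)}\|_\J < 1$ uniformly in $i$; and since each $\|\nu_n^{(j)}\|_\J \to 0$ in $n$, a finite-sum estimate preserves $R^\J_\I(v_n^{(i)}) \to u$. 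The truncation costs the support property at $p_i$ for $n < M_i$, but for $n \geq M_i$ the sequence coincides with the untruncated Lemma~\ref{lem:sequence1} output, so the inclusion holds there and is preserved in every later iteration.

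Form $W_m := v_m^{(m)} = v_m - \sum_{j=1}^m \nu_m^{(j)}$. For each fixed $j$, $\|\nu_m^{(j)}\|_\J \to 0$ in $m$ and is bounded by $\eps_j$ with $\sum_j \eps_j < \infty$, so dominated convergence for series gives $\sum_{j=1}^\infty \|\nu_m^{(j)}\|_\J \to 0$ as $m \to \infty$; hence $R^\J_\I(W_m) \to u$ and $\|W_m\|_\J < 1$ uniformly. Since the support inclusion at $p_i$ holds for $W_m$ whenever $m \geq \max(i, M_i)$, I then set $g(n) := \max\{n, M_1, \dots, M_n\}$ and $\tilde v_n := W_{g(n)}$; for all $i \leq n$, $g(n) \geq n \geq i$ and $g(n) \geq M_i$, which supplies the desired support inclusion, while $g(n) \to \infty$ transfers the norm bound and the convergence $R^\J_\I(\tilde v_n) \to u$. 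The main obstacle is reconciling the ``small modifications'' required for norm and convergence control (which forces $M_i$ large) with the ``support property active at $p_i$ by $n = i$'' requirement (which favors $M_i$ small); the quantitative truncation followed by the reindexing via $g$ is what resolves this tension.
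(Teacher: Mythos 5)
Your proof is correct and follows the same strategy as the paper's: apply Lemma~\ref{lem:sequence1} iteratively at $p_1, p_2, \dots$ and then extract a diagonal. The paper leaves the diagonalization as a one-line remark, whereas you open the black box of Lemma~\ref{lem:sequence1} to re-run its truncation with a pre-chosen summable sequence $\eps_i$, which makes the dominated-convergence step and the reindexing via $g(n)$ explicit; this is the same idea carried out in more detail, and the unnecessary reparametrization $r \in (cr_{p_i}, r_{p_i})$, $c' = cr_{p_i}/r$ (the paper just takes $r = r_{p_i}$, $c' = c$) does no harm.
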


\begin{proof}
Let $\{r_p\}_{p \in X}$, $u$, $v_n$, and $c$ be as above. Applying Lemma \ref{lem:sequence1} with $p = p_1$, $r = r_{p_1}$, $u = u$, $v_n = v_n$, and $c = c$, we get a sequence $\tilde{v}_n^1$ with $\sup_n \|\tilde{v}_n^1\|_\J < 1$, $R^\J_\I(\tilde{v}_n^1) \to u$, and $\supp{\tilde{v}_n^1} \cap B_{cr_{p_{1}}}(p_{1}) = \supp{u} \cap B_{cr_{p_{1}}}(p_{1})$. Then we apply Lemma \ref{lem:sequence1} again with $p = p_2$, $r = r_{p_2}$, $u = u$, $v_n = \tilde{v}_n^1$, and $c=c$ to get a sequence $\tilde{v}_n^2$ with $\sup_n \|\tilde{v}_n^2\|_\J < 1$, $R^\J_\I(\tilde{v}_n^2) \to u$, and $\supp{\tilde{v}_n^2} \cap B_{cr_{p_{2}}}(p_{2}) = \supp{u} \cap B_{cr_{p_{2}}}(p_{2})$ and also $\supp{\tilde{v}_n^2} \cap B_{cr_{p_{1}}}(p_{1}) \sbs \supp{\tilde{v}_n^1} \cap B_{cr_{p_{1}}}(p_{1}) = \supp{u} \cap B_{cr_{p_{1}}}(p_{1})$. Iterating, we get, for all $i$, sequences $\tilde{v}_n^i$ such that $\sup_n \|\tilde{v}_n^i\|_\J < 1$, $R^\J_\I(\tilde{v}_n^i) \to u$, and $\supp{\tilde{v}_n^i} \cap B_{cr_{p_{i'}}}(p_{i'}) \sbs \supp{u} \cap B_{cr_{p_{i'}}}(p_{i'})$ for all $i' \leq i$. By diagonalizing, we can find a single sequence $\tilde{v}_n$ with the required properties. 
\end{proof}

\begin{theorem} \label{thm:semiembedding}
Assume $X$ is countable with enumeration $X = \{p_i\}_{i=1}^\infty$. Assume $X$ is discrete. Fix an ideal $\I \sbs \Cl{X}$ such that $\F \sbs \I$ and $R^\I_{\F}$ is injective. Set $\J := \ulm{\I}$ so that Theorem \ref{thm:injective} implies $R^\J_{\F}$ is also injective (for example, $\I = \F_n$ and $\J = \F_{n+1}$). The map $R^{\J}_\I: \LF{\J,X} \to \LF{\I}{X}$ is a 7-semi-embedding.
\end{theorem}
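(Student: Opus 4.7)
The plan is to follow the outline sketched after Proposition \ref{prop:semiembeddingspecial}. Injectivity of $R^\J_\I$ is immediate from Theorem \ref{thm:injective}, so I need only establish $\closure{R^\J_\I(B_{\LF{\J}{X}})} \sbs R^\J_\I(7 B_{\LF{\J}{X}})$. Given $u \in \LF{\I}{X}$ with $\|u\|_\I < 1$ together with an approximating sequence $v_k \in B_{\LF{\J}{X}}$ satisfying $R^\J_\I(v_k) \to u$, I will fix a small parameter $\eps > 0$, set $c := 1 - \eps$ and $r_p := c\rad{\J}{p}$, and construct a preimage $v^\eps \in \LF{\J}{X}$ with $R^\J_\I(v^\eps) = u$ and $\|v^\eps\|_\J \leq 1 + 6c^{-1}$. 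A final $\eps \to 0$ weak* compactness argument will then produce the required $v$ with $\|v\|_\J \leq 7$.

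The core of the construction is three successive reductions. First, Lemma \ref{lem:quotientfinite} applied to $u$ with radii $\{r_p\}$ decomposes $u = R_\I(w_0) + Q_\I(a_0)$ with $\|w_0\| + \|a_0\|_{\l{\I}{X}} < 1$ and $\supp{a_0} \cap B_{r_p}(p)$ finite for every $p$; set $u_1 := Q_\I(a_0)$ and $v_k' := v_k - R_\J(w_0)$, so $\|v_k'\|_\J < 2$ and $R^\J_\I(v_k') \to u_1$. Second, Lemma \ref{lem:sequence2} applied to (a rescaling of) $v_k'$ with target $u_1$, radii $\{r_p\}$, and parameter $c$ yields $\tilde v_k \in \LF{\J}{X}$ with $\|\tilde v_k\|_\J < 2$, $R^\J_\I(\tilde v_k) \to u_1$, and the critical support restriction $\supp{\tilde v_k} \cap B_{cr_{p_i}}(p_i) \sbs \supp{u_1} \cap B_{cr_{p_i}}(p_i)$ for all $k \geq i$. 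Third, Lemma \ref{lem:quotientclose} applied to each $\tilde v_k/2$ with ideal $\J$ and parameter $c$ yields
\[
\tilde v_k = 2Q_\J(\tilde a^k) + 2\sum_{(p,q) \in Y_k} b^k_{(p,q)}(\delta_p - \delta_q),
\]
in which each pair $(p,q) \in Y_k$ satisfies $d(p,q) \leq c\min\{\rad{\J}{p}, \rad{\J}{q}\}$ and $\|\tilde a^k\|_{\l{\J}{X}} + \sum_{Y_k}|b^k_{(p,q)}|d(p,q) < 3c^{-1}$.

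For each pair $(p,q) \in Y_k$ one has $\{p,q\} \sbs B_{r_p}(p)$ (taking WLOG $\rad{\J}{p} \leq \rad{\J}{q}$), so grouping pair terms by the endpoint of smaller $\rad{\J}$ rewrites $\tilde v_k = 2Q_\J(\tilde a^k) + 2\sum_{i=1}^\infty v_k^i$ with $\supp{v_k^i} \sbs B_{r_{p_i}}(p_i)$. Combined with the support restriction from step two, each $v_k^i$ eventually in $k$ lies in the \emph{fixed} finite-dimensional subspace of $\LF{\J}{X}$ spanned by $\{\delta_q : q \in \supp{u_1} \cap B_{r_{p_i}}(p_i)\}$. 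I extract ultralimits along a nonprincipal ultrafilter $\U$ on $\N$: $(\tilde a^k)$ is uniformly bounded in the $\ell^1$-type dual space $\l{\J}{X}$, producing $\tilde a := \ulim_k \tilde a^k$, while each $(v_k^i)_k$ is eventually bounded in a finite-dimensional subspace, producing $v^i := \ulim_k v_k^i$. Setting $v^\eps := R_\J(w_0) + 2Q_\J(\tilde a) + 2\sum_{i=1}^\infty v^i$, the absolute summability bound $\|\tilde a^k\|_{\l{\J}{X}} + \sum_i \|v_k^i\|_\J < 3c^{-1}$ together with lower semicontinuity along $\U$ gives $\|v^\eps\|_\J \leq 1 + 6c^{-1}$, while the identity $R^\J_\I \comp Q_\J = Q_\I$ coupled with the norm convergence of $R^\J_\I(\tilde v_k) \to u_1$ yields $R^\J_\I(v^\eps) = u$.

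The main technical hurdles I anticipate are: (i) ensuring that Lemma \ref{lem:quotientclose}'s pair decomposition respects the support restriction from step two, which will likely require first invoking Lemma \ref{lem:supp} to reduce $\tilde v_k$ to $\LF{\J}{Y}$ for some $Y \sps \supp{u_1}$ before decomposing, so that the pair endpoints are forced near $\supp{u_1}$; (ii) justifying the interchange of the infinite sum $\sum_i$ with the ultralimit in the verification $R^\J_\I(v^\eps) = u$, for which a truncation argument leveraging the summability bound $\sum_i \|v_k^i\|_\J < 3c^{-1}$ should suffice; and (iii) carrying out the final $\eps \to 0$ limit inside the ambient dual space $\Lip{\J}{X}^*$ to sharpen the bound from $1 + 6c^{-1}$ to exactly $7$, which will use weak* compactness of the family $\{v^\eps\}$ together with the injectivity of $R^\J_\I$ from Theorem \ref{thm:injective} to confirm that the limit still lies inside $\LF{\J}{X}$.
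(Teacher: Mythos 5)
Your proposal follows the same architecture as the paper's proof and invokes the same key lemmas (Lemmas \ref{lem:quotientfinite}, \ref{lem:sequence2}, \ref{lem:supp}, \ref{lem:quotientclose}) in the same order, so the skeleton is right; the grouping of pair terms into blocks $v_k^i$ supported in $B_{r_{p_i}}(p_i)$ is a harmless repackaging of the paper's direct coefficient bookkeeping. However, your anticipated resolutions of the three hurdles diverge from what is actually needed. For hurdle (i), it is not enough to invoke Lemma \ref{lem:supp}; you must also apply Lemma \ref{lem:quotientclose} with the smaller parameter $c^3$ (not $c$), because $Z_k$ has the form $F_k \cup (X\setminus\bigcup_{i\le k}B_{c^2 r_{p_i}}(p_i))$ after the $\eps$-margin in Lemma \ref{lem:supp}, and with parameter $c$ a pair $(p_i,q)$ satisfying $c^3\rad{\J}{p_i}<d(p_i,q)\le c\rad{\J}{p_i}$ could have $q$ in the unbounded complement-of-balls part of $Z_k$ rather than in the finite set $F_k$, destroying the eventual finite-dimensionality of $v_k^i$. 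Your working bound then becomes $1+6c^{-3}$ rather than $1+6c^{-1}$, though this still tends to $7$.

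For hurdle (ii), the interchange of $\sum_i$ with the ultralimit is not obtained from a truncation argument based on the $\ell^1$ bound $\sum_i\|v_k^i\|_\J<3c^{-3}$; that bound alone does not let you pass to the limit. What makes the interchange valid is testing against the Lipschitz functions $\one_{p_j}$ (available because $X$ is discrete) and using the fact that, for $n\ge j$, the set of indices contributing to the $p_j$-coordinate is contained in the \emph{fixed} finite set $\supp{u}\cap B_{cr_{p_j}}(p_j)$ — a consequence of the support control from Lemma \ref{lem:sequence2} combined with the parameter coordination just mentioned, not of the summability estimate. For hurdle (iii), the weak* compactness argument in $\Lip{\J}{X}^*$ is unnecessary and also risky, since a weak* limit need not lie in $\LF{\J}{X}$: the injectivity of $R^\J_\I$ forces all your preimages $v^\eps$ to coincide, giving a single $v\in\LF{\J}{X}$ with $\|v\|_\J\le 1+6c^{-3}$ for every $c\in(0,1)$, hence $\|v\|_\J\le 7$ directly. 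This is the observation that closes the proof in the paper.
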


\begin{proof}
By Theorem \ref{thm:injective}, $R^\J_\I$ is injective, so it remains to show \\ $\closure{R^\J_\I(B_{\LF{\J}{X}})} \sbs R^\J_\I(7B_{\LF{\J}{X}})$. Suppose $v'_n \in\LF{\J,X}$ and $R^{\J}_\I(v'_n) \to u' \in \LF{\I}{X}$ with $\sup_n \|v'_n\|_\J < 1$ and $\|u'\|_\I < 1$. Let $c \in (0,1)$. Set $r_p := c \cdot \rad{\J}{p}$ for each $p \in X$. By Lemma \ref{lem:quotientfinite}, $u' = R_\I(w) + u$ for some $w \in \LF{X}$ and $u \in \LF{\I}{X}$ with $\|w\| + \|u\|_\I < 1$ and $\supp{u} \cap B_{r_p}(p)$ is finite for each $p \in X$. Set $v_n := v'_n - R_{\J}(w)$ so that $\sup_n\|v_n\|_\J < 2$ and $R^{\J}_\I(v_n) \to u$. Then we apply Lemma \ref{lem:sequence2} with $r_p = r_p$, $u = u$, $v_n = v_n$, $c = c$ to obtain a sequence $\tilde{v}_n$ such that
\begin{enumerate}
\item $\|\tilde{v}_n\|_\J < 2$ for all $n \geq 1$,
\item $R^\J_\I(\tilde{v}_n) \to u$,
\item \label{item:3} for all $p_i \in X$ and $n \geq i$, $\supp{\tilde{v}_n} \cap B_{cr_{p_i}}(p_i) \sbs \supp{u} \cap B_{cr_{p_i}}(p_i)$.
\end{enumerate}
Fix $n \geq 1$, and set $Y := X \setminus (\cup_{i \leq n} B_{cr_{p_i}}(p_i))$ and $F := \cup_{i \leq n} (\supp{u} \cap B_{cr_{p_i}}(p_i))$ so that $F$ is finite and $\supp{\tilde{v}_n} \sbs Y \cup F$. Choose $\eps>0$ small enough so that $B_{\eps}(Y) \sbs X \setminus (\cup_{i \leq n} B_{c^2r_{p_i}}(p_i))$. Then Lemma \ref{lem:supp} implies
\begin{equation} \label{eq:supp}
\tilde{v}_n \in \LF{\J}{Z_n}
\end{equation}
where $Z_n := F \cup (X \setminus (\cup_{i \leq n} B_{c^2r_{p_i}}(p_i)))$.
Set $\rho(p,q) := c^3\min(\rad{\J}{p},\rad{\J}{q}) \leq c^2r_{p}$ and for each $j \geq 1$, $Z_{n,j} := B_{\rho(p_j,q)}(p_j) \cap Z_n$. When $n \geq j$,
\begin{equation} \label{eq:Znj}
Z_{n,j} = B_{\rho(p_j,q)}(p_j) \cap F \sbs \supp{u} \cap B_{cr_{p_j}}(p_j).
\end{equation}

Suppose $\tilde{v}_n = \sum_{p \in X} c_{p,n}\delta_p$ and $u = \sum_{p \in X} c_p\delta_p$, so that $c_{p,n} \to c_p$ for each $p$. Since $\|\tilde{v}_n\|_\J < 2$, \eqref{eq:supp} and Lemma \ref{lem:quotientclose} allow us write
\begin{equation} \label{eq:c=b}
\sum_{p \in X} c_{p,n}\delta_p = \sum_{\substack{(p,q) \in Z_n^2 \\ d(p,q) \leq \rho(p,q)}}b_{(p,q),n}(\delta_p-\delta_q) + \sum_{p \in Z_n} a_{p,n} \delta_p
\end{equation}
with
$$\sum_{\substack{(p,q) \in Z_n^2 \\ d(p,q) \leq \rho(p,q)}}|b_{(p,q),n}|d(p,q) + \sum_{p \in Z_n} |a_{p,n}|\rad{\J}{p} < 6c^{-3}.$$
For the sake of notational convenience, we'll define $b_{(p,q),n} := 0$ if $(p,q) \in X^2 \setminus Z_n^2$ or $d(p,q) > \rho(p,q)$ and $a_{p,n} := 0$ if $p \in X \setminus Z_n$ so that the above equations remain true when the first terms are summed over $X^2$ and the second over $X$.

Fix a nonprincipal ultrafilter $\U$ on $\N$. It is easy to see that $|b_{(p,q),n}| \leq 6c^{-3}d(p,q)^{-1}$ and $|a_{p,n}| \leq 6c^{-3}\rad{\J}{p}^{-1}$ for all $p\neq q \in X$ and $n \geq 1$, and thus $b_{(p,q)} := \ulim_{n \to \infty} b_{(p,q),n}$ and $a_p := \ulim_{n \to \infty} a_{p,n}$ exist.
For each $p \in X$, evaluating each side of \eqref{eq:c=b} at the Lipschitz function $\one_p$ yields
$$c_{p,n} = \sum_{q \in X} (b_{(p,q),n}-b_{(q,p),n}) + a_{p,n}$$
$$=\sum_{\substack{q \in Z_n \\ d(p,q) \leq \rho(p,q)}} (b_{(p,q),n}-b_{(q,p),n}) + a_{p,n}.$$
Then for each $j \geq 1$, combining this with the definition of $Z_{n,j}$ gives
$$c_{p_j,n} = \sum_{q \in Z_{n,j}} (b_{(p_j,q),n}-b_{(q,p_j),n}) + a_{p_j,n}.$$
By \eqref{eq:Znj}, the set over which this sum is performed, $Z_{n,j}$, is eventually (depending on $j$) contained in the fixed finite set $\supp{u} \cap B_{cr_{p_j}}(p_j)$. Thus we can interchange the sum and $\U$-limit to obtain
$$c_{p_j} = \ulim_{n \to \infty} c_{p_j},n = \ulim_{n \to \infty} \sum_{q \in X} (b_{(p_j,q),n}-b_{(q,p_j),n}) + a_{p_j,n}$$
$$= \sum_{q \in X} \ulim_{n \to \infty} (b_{(p_j,q),n}-b_{(q,p_j),n}) + \ulim_{n \to \infty} a_{p_j,n}$$
$$= \sum_{q \in X} (b_{(p_j,q)}-b_{(q,p_j)}) + a_{p_j}$$
and thus
$$u = \sum_{p_j \in X} c_{p_j}\delta_{p_j} = \sum_{p_j \in X} \sum_{q \in X}(b_{(p_j,q)}-b_{(q,p_j)})\delta_{p_j} + a_{p_j}\delta_{p_j}$$
$$= \sum_{(p,q) \in X^2}b_{(p,q)}(\delta_p-\delta_q) + \sum_{p \in X} a_p \delta_p$$
and
$$\sum_{(p,q) \in X^2} \|b_{(p,q)}(\delta_p-\delta_q)\|_\J + \sum_{p \in X} \|a_{p} \delta_p\|_\J$$
$$\leq \sum_{(p,q) \in X^2}|b_{(p,q)}|d(p,q) + \sum_{p \in X} |a_{p}|\rad{\J}{p}$$
$$= \sum_{(p,q) \in X^2}\ulim_{n \to \infty}|b_{(p,q),n}|d(p,q) + \sum_{p \in X} \ulim_{n \to \infty}|a_{p,n}|\rad{\J}{p}$$
$$\leq \ulim_{n \to \infty} \sum_{(p,q) \in X^2}|b_{(p,q),n}|d(p,q) + \sum_{p \in X}|a_{p,n}|\rad{\J}{p} \leq 6c^{-3}$$
where the second to last inequality is Fatou's lemma. Thus, we set $v = \sum_{(p,q) \in X^2}b_{(p,q)}(\delta_p-\delta_q) + \sum_{p \in X} a_{p} \delta_p \in \LF{\J}{X}$, so the last two formulae give us $R^\J_\I(v) = u$ and $\|v\|_\J \leq 6c^{-3}$. Then $R^\J_\I(R_\J(w)+v) = R_\I(w) + u = u'$ and $\|R_\J(w)+v\|_\J \leq 1 + 6c^{-3}$. Taking $c \nearrow 1$ shows $\closure{R^\J_\I(B_{\LF{\J}{X}})} \sbs R^\J_\I(7B_{\LF{\J}{X}})$.
\end{proof}

\subsection{Predual of $\LF{\F}{X}$} \label{ss:predual} ~\\
\indent In this subsection, we observe that $\LF{\F}{X}$ is a dual space. This is needed for the base case in the proof of Theorem \ref{thm:RNP}.

\begin{definition}
Recall that $\lip{X}$ is the closed subspace of $\Lip{X}$ defined by $f \in \lip{X}$ if, for every $\eps>0$, there exists $\delta > 0$ such that $0<d(x,y)<\delta$ implies $|f(y)-f(x)| \leq \eps d(x,y)$. Whenever $\I \sbs \Cl{X}$ is an ideal, we define $\boldsymbol{\lip{\I}{X}} := \lip{X} \cap \Lip{\I}{X}$. We say that $\lip{\I}{X}$ \emph{separates points uniformly} if there exists $a < \infty$ such that for every $F \sbs X$ finite and $g \in B_{\Lip{\I}{X}}$, there exists $f \in \lip{\I}{X}$ such that $\|f\| \leq a$ and $f$ and $g$ agree on $F$.
\end{definition}

We recall the following theorem from \cite{W}. The theorem is stated there for compact metric spaces, but the proof easily extends to compactly supported Lipschitz functions on a locally compact metric space.

\begin{theorem}[\cite{W}, Theorem 3.3.3] \label{thm:predual}
Let $\K \sbs \Cl{X}$ denote the ideal of compact subsets. If $\lip{\K}{X}$ separates points uniformly, then $\LF{\K}{X} = \lip{\K}{X}^*$.
\end{theorem}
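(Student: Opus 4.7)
The plan is to adapt Weaver's argument for his Theorem 3.3.3 (stated for compact $X$) to the locally compact setting; the only required adaptation is to accommodate compactly-supported Lipschitz functions rather than functions on a compact ambient space. Consider the natural evaluation map $J \colon \LF{\K}{X} \to \lip{\K}{X}^*$ defined by $J(v)(f) = v(f)$; the goal is to show $J$ is an isomorphism.

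First, $J$ is bounded with $\|J\| \leq 1$ because $\lip{\K}{X}$ embeds in $\Lip{\K}{X}$ with norm one. Second, $J$ is bounded below by $a^{-1}$, where $a$ is the uniform-separation constant: for $v \in \LFf{\K}{X}$ with finite support $F$ and $\eps > 0$, pick $g \in B_{\Lip{\K}{X}}$ with $v(g) > \|v\|_\K - \eps$, then apply uniform separation to produce $f \in \lip{\K}{X}$ with $\|f\| \leq a$ and $f|_F = g|_F$. Since $v$ only sees values on $F$, one has $v(f) = v(g) > \|v\|_\K - \eps$, yielding $\|J(v)\|_{\lip{\K}{X}^*} \geq a^{-1}(\|v\|_\K - \eps)$; density extends this to all of $\LF{\K}{X}$, so $J$ is an isomorphism onto its closed range. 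Third, the range is weak*-dense in $\lip{\K}{X}^*$: any $f \in \lip{\K}{X}$ annihilating it satisfies $f(p) = J(\delta_p)(f) = 0$ for every $p \in X$, forcing $f = 0$, and Hahn-Banach in the weak* topology concludes.

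The main obstacle is surjectivity, since in general a norm-closed weak*-dense subspace of a dual need not coincide with the whole dual. The fix, following Weaver, is to show that $J(B_{\LF{\K}{X}})$ is weak*-closed in $\lip{\K}{X}^*$; the Krein--\v{S}mulian theorem then makes $J(\LF{\K}{X})$ itself weak*-closed, and combined with the weak*-density above this forces equality. The weak*-closedness of the ball is obtained by representing elements of $\LF{\K}{X}$ as images of $\ell^1$-combinations over $\tilde X = X \cross X \setminus \Delta$ under the quotient map $\pi$ from Section \ref{sec:prelims}, applying weak*-compactness in $\ell^1(\tilde X)/d$, and verifying that the resulting weak*-limit still represents an element of $\LF{\K}{X}$ when tested against compactly-supported $\lip{\K}{X}$-functions.

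The passage from Weaver's compact-$X$ statement to the present locally compact one with $\K$ the ideal of compact subsets is routine precisely because every $f \in \lip{\K}{X}$ has compact support. Any test of weak*-convergence therefore engages only a compact subset of $X$, and Weaver's original compact-$X$ argument applies on each such compact piece without modification.
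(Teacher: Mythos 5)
The paper does not actually prove this statement; it cites Weaver's Theorem 3.3.3 and remarks only that ``the proof easily extends to compactly supported Lipschitz functions on a locally compact metric space.'' So there is no in-paper proof to compare against, and your proposal has to stand on its own merits as a verification of that remark.

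Your steps 1--4 (boundedness of $J$, boundedness below via uniform separation, injectivity, weak*-density of the range via the bipolar/Hahn--Banach argument, and the Krein--\v{S}mulian reduction to weak*-closedness of the ball) are correct and are indeed the standard skeleton of the de Leeuw--Weaver duality argument. The Krein--\v{S}mulian step also checks out: since $J$ is an isomorphism onto its range, $J(\LF{\K}{X}) \cap nB_{\lip{\K}{X}^*}$ equals $J(anB_{\LF{\K}{X}}) \cap nB_{\lip{\K}{X}^*}$, so weak*-closedness of $J(B_{\LF{\K}{X}})$ is all you need.

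The gap is in the step you describe most briefly, which is also the crux of the theorem: showing $J(B_{\LF{\K}{X}})$ is weak*-closed. The route you propose --- lift a bounded net $v_\alpha$ to $\ell^1(\tilde X)/d$ via $\pi$, extract a weak*-convergent subnet there, and push the limit back through $\pi$ --- does not work as stated. The space $\ell^1(\tilde X)/d$ is a dual only with respect to the weighted $c_0$-predual $\{g\colon \tilde X \to \R : \lim_{(p,q)} g_{(p,q)}/d(p,q) = 0 \text{ in the cofinite sense}\}$. For the weak*-limit of $b_\alpha$ in $\ell^1(\tilde X)/d$ to be compatible with evaluation against a fixed $f \in \lip{\K}{X}$, you would need the de Leeuw transform $(p,q) \mapsto f(p)-f(q)$ to lie in that predual, i.e., to be a $c_0$-function on the discrete set $\tilde X$. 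That fails for essentially any nonzero $f$, since $f(p)-f(q)$ is nonzero on infinitely many pairs. In other words, $\pi$ is \emph{not} weak*-to-weak*-continuous for the relevant dualities, so weak*-compactness of balls in $\ell^1(\tilde X)/d$ does not transfer to $\LF{\K}{X}$ or to $\lip{\K}{X}^*$ along $\pi$. This is not a minor technicality: it is precisely where the little-Lipschitz hypothesis and compactness/local compactness have to enter. Weaver's actual argument exploits the fact that for $f \in \lip{\K}{X}$ on a compact (or proper) space the elementary molecules $\frac{\delta_p-\delta_q}{d(p,q)}$ converge to $0$ weak* in $\lip{\K}{X}^*$ as $(p,q)$ approaches the diagonal, which is the geometric input that produces a weak*-compact generating set; no analogue of that input appears in your sketch. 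Until this step is actually carried out, the proposal establishes that $J$ is an isomorphism onto a norm-closed, weak*-dense subspace, but not that it is onto.
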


\begin{corollary} \label{cor:predual}
If $X$ is discrete, then $\LF{\F}{X} = \Lip{\F}{X}^*$.
\end{corollary}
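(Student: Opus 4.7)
The plan is to deduce this from Theorem \ref{thm:predual}. Two preliminary observations set up the reduction. First, $X$ is locally compact, because discreteness means every point $p$ is isolated, so $\{p\}$ is itself a compact neighborhood of $p$. Second, every compact subset of a discrete metric space is finite, so the ideals $\K$ of compact subsets and $\F$ of finite subsets of $X$ coincide in $\Cl{X}$. Consequently $\LF{\K}{X} = \LF{\F}{X}$ and $\lip{\K}{X} = \lip{\F}{X}$, and it suffices to verify the hypothesis of Theorem \ref{thm:predual}, namely that $\lip{\F}{X}$ separates points uniformly.

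The heart of the argument is the identity $\lip{\F}{X} = \Lip{\F}{X}$. Given $f \in \Lip{\F}{X}$ with finite support $F$, each $p \in F$ is isolated, so there is a radius $\delta_p > 0$ with $d(p,q) \geq \delta_p$ for every $q \in X \setminus \{p\}$; setting $\delta := \min_{p \in F}\delta_p > 0$, any distinct $x, y \in X$ with $d(x,y) < \delta$ must both lie outside $F$, forcing $f(x) = f(y) = 0$. Hence the little-Lipschitz estimate $|f(y) - f(x)| \leq \eps d(x,y)$ holds vacuously for any $\eps > 0$, and $f \in \lip{\F}{X}$.

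With this identification in hand, the uniform separation condition is trivial: for any finite $F \sbs X$ and any $g \in B_{\Lip{\F}{X}}$, the choice $f := g$ itself lies in $\lip{\F}{X}$, has norm at most $1$, and agrees with $g$ on $F$. Theorem \ref{thm:predual} then yields $\LF{\F}{X} = \lip{\F}{X}^* = \Lip{\F}{X}^*$. The only substantive step is the equality $\lip{\F}{X} = \Lip{\F}{X}$, but it is not really an obstacle --- it rests on the simple fact that a finite subset of a discrete metric space is uniformly isolated from its complement, so there is no real difficulty in the corollary beyond translating between the ideals $\K$ and $\F$.
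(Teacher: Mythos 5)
Your proof is correct and follows essentially the same route as the paper's terse argument: identify $\K = \F$ for a discrete space, observe that $\lip{\F}{X} = \Lip{\F}{X}$, and invoke Theorem \ref{thm:predual}. You merely spell out the elementary verification (isolation of the finite support yields the little-Lipschitz condition vacuously) that the paper leaves implicit.
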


\begin{proof}
Assume $X$ is discrete. Then $\K = \F$ and \\ $\lip{\K}{X} = \lip{\F}{X} = \Lip{\F}{X}$. The corollary then follows from Theorem \ref{thm:predual}.
\end{proof}

\section{Open Questions}
We conclude by listing additional open questions learned from Anton\'in Proch\'azka during a Banach space theory webinar.

\begin{question}
Are the Radon-Nikod\'ym and Schur properties equivalent for Lipschitz free spaces?
\end{question}

\begin{question}
Does a Lipschitz free space failing to have RNP necessarily contain an isomorph of $L^1(\R)$?
\end{question}

\begin{question}
Does $\LF{X}$ failing to have the RNP imply $X$ contains a biLipschitz copy of a compact, positive measure subset of $\R$?
\end{question}

\begin{question}
Does $\LF{X}$ containing an isomorph of $L^1(\R)$ imply $X$ contains a biLipschitz copy of a compact, positive measure subset of $\R$?
\end{question}

The converse to each of the last three questions has a positive answer. This is because $L^1(\R)$ fails to have the RNP and $\LF{X}$ is isomorphic to $L^1(\R)$ whenever $X$ is biLipschitz equivalent to a compact, positive measure subset of $\R$ (\cite[Corollary 3.4]{Godard}).

\bibliographystyle{abbrv}
\bibliography{lipschitzfreebib}

\end{document}